\DeclarePairedDelimiter\floor{\lfloor}{\rfloor}
\DeclareMathOperator{\re}{\mathbb{R}e}
\newcommand{\INF}{{\infty}}
\newcommand{\dist}{\mbox{dist}}
\newcommand{\tta}{\theta}
\newcommand{\OM}{\Omega}
\newcommand{\sph}{{{\mathbf S}^ 1}}
\newcommand{\del}{\partial}
\newcommand{\Gam}{\varGamma}
\newcommand{\ol}{\overline}
\newcommand{\ds}{\displaystyle}
\newcommand{\dba}{\overline{\partial}}
\newcommand{\BR}{\mathbb{R}}
\newcommand{\BZ}{\mathbb{Z}}
\newcommand{\BN}{\mathbb{N}}
\newcommand{\fii}{{\varphi}}
\newcommand{\bu}{{\bf u}}
\newcommand{\bv}{{\bf v}}
\newcommand{\bg}{{\bf g}}
\newcommand{\bq}{{\bf q}}
\newcommand{\bbf}{{\bf f}}
\newcommand{\btheta}{\boldsymbol \theta}
\newcommand{\balpha}{\boldsymbol \alpha}
\newcommand{\bbeta}{\boldsymbol \beta}
\newcommand{\spr}[2]{\langle #1,#2 \rangle}
\newcommand{\lnorm}[1]{ \left\| #1 \right\|}
\newcommand{\commentK}[1]{\par\noindent\textcolor{blue}{\textbf{Kamran: \textit{#1}}}\par}
\newcommand{\Laplace}{\mathop{}\!\mathbin\bigtriangleup}
\newtheorem{theorem}{Theorem}[section]
\newtheorem{prop}{Proposition}[section]
\newtheorem{lemma}{Lemma}[section]
\newtheorem{cor}{Corollary}[section]
\title[A Fourier approach to the inverse source problem]{A Fourier approach to the inverse source problem in an absorbing and anisotropic scattering medium}
\begin{document}
\date{\today}
\author{Hiroshi Fujiwara}
\address{Graduate School of Informatics,  Kyoto University, Yoshida Honmachi, Sakyo-ku, Kyoto 606-8501, Japan }
\email{fujiwara@acs.i.kyoto-u.ac.jp}
%    Information for second author
\author{Kamran Sadiq}
%    Address of record for the research reported here
\address{Johann Radon Institute of Computational and Applied Mathematics (RICAM), Altenbergerstrasse 69, 4040 Linz, Austria}
\email{kamran.sadiq@ricam.oeaw.ac.at}
%    Information for last author
\author{Alexandru Tamasan}
\address{Department of Mathematics, University of Central Florida, Orlando, 32816 Florida, USA}
\email{tamasan@math.ucf.edu}

%    General info
\subjclass[2010]{Primary 35J56, 30E20; Secondary 45E05}
\keywords{Attenuated $X$-ray transform, Attenuated Radon transform, scattering, $A$-analytic maps, Hilbert transform, Bukhgeim-Beltrami equation, optical tomography, optical molecular imaging}
\maketitle

%%%%%%%%%%%%%%%  ABSTRACT %%%%%%%%%%%%%%%%%%%%%%%%%%
\begin{abstract}
We revisit the  inverse source problem in a two dimensional absorbing and scattering medium and present a non-iterative reconstruction method using measurements of the radiating flux at the boundary. The attenuation and scattering coefficients are known and the unknown source is isotropic. The approach is based on the Cauchy problem for a Beltrami-like equation for the sequence valued maps, and extends the original ideas of A. Bukhgeim  from the non-scattering to scattering media. We demonstrate the feasibility of the method in a numerical experiment in which the scattering is modeled by the two dimensional Henyey-Greenstein kernel with parameters meaningful in Optical Tomography.  
\end{abstract}
%%%%%%%%%%%%%%%% INTRODUCTION %%%%%%%%%%%%%%%%%%%%%%%
\section{Introduction}

This work concerns a Fourier approach to the inverse source problem for radiative transport in a strictly convex domain $\OM$ in the Euclidean plane. The attenuation and scattering coefficients are known real valued functions. Generated by an unknown source $f$, in the steady state case, the density of particles $u(z,\btheta)$ at $z$ traveling in the direction $\btheta$ solve the stationary transport equation
\begin{align} \label{TransportScatEq1}
\btheta\cdot\nabla u(z,\btheta) +a(z) u(z,\btheta) &=  \int_{\sph} k(z,\btheta\cdot\btheta')u(z,\btheta') d\btheta' + f(z) , \quad (z,\btheta)\in \OM\times\sph,
\end{align}where 
$\sph$ denotes the unit sphere. 

Let $\Gam_\pm:=\{(z,\btheta)\in \Gam \times\sph:\, \pm\nu(z)\cdot\btheta>0 \}$ be the incoming (-), respectively outgoing (+), unit tangent sub-bundles of the boundary; where  $\nu(z)$ is the outer unit normal at $z \in \Gam$.
The (forward) boundary value problem for \eqref{TransportScatEq1} assumes a given  incoming flux $u$ on ${\Gam_-}$, In here we assume that there is no incoming radiation from outside the domain, $\displaystyle u|_{\Gamma_-}=0$. The boundary value problem is know to be well-posed under various admissibility and subcritical assumptions, e.g, in \cite{dautrayLions4, choulliStefanov96, choulliStefanov99, anikonov02,mokhtar}, with the most general result for a generic pair of coefficients obtained by Stefanov and Uhlmann \cite{stefanovUhlmann08}. In here we assume that the forward problem is well-posed, and that the outgoing radiation $u |_{\Gamma_+}$ is measured, and thus the trace $u \lvert_{\Gamma\times\sph}$ is known. Without loss of generality $\Omega$ is the unit disc.

In here we show how to recover $f$ from knowledge of  $u$ on the torus $\Gamma\times\sph$ and provide an error, and stability estimates.

When $a=k=0$, this is the classical $X$-ray tomography problem of Radon \cite{radon1917}, where $f$ is to be recovered from its integrals along lines, see also \cite{naterrerBook,helgasonBook,ludwig}. For $a\neq 0$ but $k=0$, this is the problem of inversion of the Attenuated Radon transform in two dimensions, solved successfully by Arbuzov, Bukhgeim and Kazantsev ~\cite{ABK}, and Novikov \cite{novikov01}; see \cite{naterrer01,bomanStromberg,bal04} for later approaches. 

The inverse source problem in an absorbing and scattering media, $a,k\neq0$, has also been considered (e.g., \cite{larsen75, siewert93}) in the Euclidean setting, and in \cite{sharafutdinov97} in the Riemannian setting. The most general result ($k$ may vary with two independent directions) on the stable determination of the source was obtained by Stefanov and Uhlmann \cite{stefanovUhlmann08}. The reconstruction of the source based on \cite{stefanovUhlmann08} is yet to be realized. When the anisotropic part of scattering is sufficiently small, a convergent iterative method for source reconstruction was proposed in \cite{balTamasan07}. Based on a perturbation argument to the non-scattering case in \cite{novikov01}, the method does not extend to strongly anisotropic scattering. In addition, it requires solving one forward problem  (a computationally extensive effort) at each iteration.

The main motivation of this work is to provide a source reconstruction method that applies to the anisotropic scattering media, with non-small anisotropy. In here we propose such a non-iterative method. Our approach extends the original ideas in \cite{ABK} from the non-scattering to the scattering media. 

Throughout we assume that $a\in C^{2,s}(\ol\OM)$, and $k$ and its angular derivative are periodic in the angular variable,  $k\in C^{1,s}_{per}([-1,1]; Lip(\Omega))$, $s>1/2$, and that the forward problem is well posed. It is known from \cite{stefanovUhlmann08} that for pairs of coefficients $(a,k)$ in an open and dense sets of $C^2 \times C^2$, and for any $f\in L^2(\Omega)$, there is a unique solution $u\in L^2(\OM\times\sph)$ to the forward boundary value problem. However, our approach requires a smooth solution $u\in H^1(\Omega\times\sph)$. As a direct consequence of  \cite[Proposition 3.4]{stefanovUhlmann08} the regularity of $u$ is dictated by its ballistic term. In particular, if $f\in H^1(\Omega)$, then  $u\in H^1(\OM\times\sph)$. With the exception of the numerical examples in Section \ref{Sec:numerics}, we assume that the unknown source $f\in H^1(\Omega)$, and thus the unknown solution 
\begin{equation}\label{regularity}
u\in H^1(\Omega\times\sph).\end{equation}
In the numerical experiment we use a discontinuous source, whose successful quantitative reconstruction indicates robustness of the method.

Let $u(z,\btheta) = \sum_{-\infty}^{\infty} u_{n}(z) e^{in\tta}$ be the formal Fourier series representation of $u$ in the angular variable $\btheta=(\cos\tta,\sin\tta)$. Since $u$ is real valued, $u_{-n}=\ol{u_n}$ and the angular dependence is completely determined by the sequence of its nonpositive Fourier modes 
\begin{align}\label{boldu}
\OM \ni z\mapsto  \bu(z)&: = \langle u_{0}(z), u_{-1}(z),u_{-2}(z),... \rangle.
\end{align}
Let $k_n(z)=\frac{1}{2\pi} \int_{-\pi}^{\pi} k(z,\cos \theta ) e^{-i n\theta}d\theta$, $n\in\mathbb{Z}$, be the Fourier coefficients of the scattering kernel. Since $k(z, \cos \tta)$ is both real valued and even in $\tta$,  $k_n(z)$ are real valued and $k_n(z)=k_{-n}(z)$.

Throughout this paper the Cauchy-Riemann operators $\dba = (\del_x+i\del_y)/2$ and $\del =(\del_x-i\del_y)/2$ refer to derivatives in the spatial domain. By using the advection operator $\btheta \cdot\nabla=e^{-i\tta}\dba + e^{i\tta}\del$, and identifying the Fourier coefficients of the same order, the equation \eqref{TransportScatEq1} reduces to the system:
\begin{align}\label{freeeq}
\overline{\del} u_{1}(z)+\del u_{-1}(z) + a(z)u_{0}(z)=k_{0}(z)u_{0}(z)+f(z),
\end{align}and 
\begin{align}\label{infsys0}
\dba u_n(z) +\del u_{n-2}(z) + a(z)u_{n-1}(z)=k_{n-1}(z)u_{n-1}(z),\quad n\neq 1.
\end{align}
In particular, the sequence valued map \eqref{boldu} solves the Beltrami-like equation
\begin{align}\label{beltrami}
\dba\bu(z) +L^2 \del\bu(z)+ a(z)L\bu(z) = L K\bu(z),\quad z\in \OM,
\end{align}
where  $L\bu(z)=L (u_0(z),u_{-1}(z),u_{-2}(z),...):=(u_{-1}(z),u_{-2}(z),...)$ denotes the left translation, and
\begin{align}\label{multiplier}
K\bu(z):=(k_0(z) u_0(z),k_{-1}(z)u_{-1}(z),k_{-2}(z)u_{-2}(z),...)
\end{align}is a Fourier multiplier operator determined by the scattering kernel.

Our data $u \lvert_{\Gam \times \sph}$ yields the trace of  a solution of \eqref{beltrami} on the boundary,  
\begin{align}\label{gdata_defn}
 \bg = \bu \lvert_{\Gam} = \langle g_{0}, g_{-1}, g_{-2},... \rangle.
\end{align}

Bukhgeim's original  theory in \cite{bukhgeimBook}  concerns solutions of \eqref{beltrami} for $a=0$ and $K=0$. Solutions of
\begin{align}\label{Analytic}
\dba \bu+ L^2\del \bu=0,
\end{align}(called $L^2$-analytic) satisfy a Cauchy-like integral formula, which recovers $\bu$ in $\OM$ from its trace $\bu |_{\Gam}$. In the explicit form in \cite{finch}, for each $n \geq 0$,
\begin{align}\label{CauchyBukhgeimformula}
u_{-n}(\zeta)=\frac{1}{2\pi i}\int_\Gam\frac{u_{-n}(z)}{z-\zeta}dz + 
\frac{1}{2\pi i} \int_\Gam \left\{
\frac{dz}{z-\zeta} - \frac{d\ol{z}}{\ol{z}-\ol{\zeta}}
\right\}\sum_{j=1}^\infty u_{-n-2j}(z)\left(\frac{\ol{z-\zeta}}{z-\zeta}\right)^j,\;\zeta\in\OM.
\end{align}

In Section \ref{Sec:prelim} we review the absorbing, non-scattering case. While we follow the treatment in \cite{sadiqTamasan01}, it is in this section that the new analytical framework and notation is introduced. Section \ref{Sec:Sourcerec_poly} describe the reconstruction method for scattering kernels of polynomial dependence in the angular variable. Except for the numerical section in the end, the remaining of the paper analyzes the error made by the polynomial approximation of the scattering kernel. In Section \ref{Sec:Gradientest_BBeq} we exhibit the gain in smoothness due to the scattering, in particular, the $1/2$-gain in \eqref{LNKvphalf} below has been known (with different proofs) see \cite{mokhtar}, and in a more general case than considered here in \cite{stefanovUhlmann08}.

The key ingredient in our analysis is an a priori gradient estimate  for solutions of the inhomogeneous Bukhgeim-Beltrami equations, see Theorem \ref{new_estimate} in Section \ref{Sec:Gradientest_BBeq}. Our starting point  is an energy identity, an idea originated in the work of Mukhometov \cite{mukhometov75}, and an equivalent of Pestov's identity \cite{sharafutdinov97,tamasan03} for the Bukhgeim-Beltrami equation. The proof of the gradient estimate in Theorem \ref{new_estimate} uses essentially $1$ derivative gain in smoothness  due to scattering. As a consequence, in Theorem \ref{error_estimate} we establish an error estimate, which yields a stability result for scattering with polynomial angular dependence (see Corollary \ref{Cor_errorEst}). Furthermore, in a weakly anisotropic scattering medium the method is convergent (see Theorem \ref{convergence_result}),  thus recovering the result in \cite{balTamasan07}.

The feasibility of the proposed method is implemented in two numerical experiments in Section \ref{Sec:numerics}. 
Among the several models for the scattering kernel used in Optical Tomography \cite{arridge}, we work with the two dimensional version of the Henyey-Greenstein kernel for its simplicity.
In this kernel we chose the anisotropy parameter to be $1/2$ ( half way between the ballistic and isotropic regime), and a mean free path of  $1/5$ units of length e.g. (meaningful value for fluorescent light scattering in  the fog).

%Section 2 - \ref{Sec:prelim}
%Section 3 - \ref{Sec:Sourcerec_poly}
%Section 4 - \ref{Sec:Gradientest_BBeq}
%Section 5 - \ref{Sec:SmoothingK}
%Section 6 - \ref{Sec:GenScat_Map}
%Section 7 - \ref{Sec:numerics}

%%%%%%%%%%%%%%%%%%%%%%%%%%%%%%%%%%%%%%%%%%%%%%%%%%%%%%%%%
%%%%%%%%%%%%%%%%%%%%%%%%%%%%%%%%%%%%%%%%%%%%%%%%%%%%%%%%%%%%%%
\section{A brief review of the absorbing non-scattering medium}\label{Sec:prelim}

%%%%%%%%%%%%%%%%%%%%%%%%%%%%%%%%%%%%%%%%%%%%%%%
In the case for $a\neq0$ and $K=0$, the Beltrami equation \eqref{beltrami} can be reduced to \eqref{Analytic} via an integrating factor. While this idea originates in \cite{ABK},
in here (as in \cite{sadiqTamasan01}) we use the special integrating factor proposed by Finch in \cite{finch}, which enjoys the crucial property of having vanishing negative Fourier modes.
This special integrating factor is $e^{-h}$, where 
\begin{align}\label{hDefn}
h(z,\btheta) := Da(z,\btheta) -\frac{1}{2} \left( I - i H \right) Ra(z\cdot \btheta^{\perp}, \btheta^{\perp}),
\end{align}and $\btheta^\perp$ is  orthogonal  to $\btheta$, 
$Da(z,\btheta) =\ds \int_{0}^{\INF} a(z+t\btheta)dt$ is the divergent beam transform of the attenuation $a$, 
$Ra(s, \btheta^{\perp}) = \ds \int_{-\INF}^{\INF} a\left( s \btheta^{\perp} +t \btheta \right)dt$ is the Radon transform of
the attenuation $a$, and
the classical Hilbert transform $H h(s) = \ds \frac{1}{\pi} \int_{-\INF}^{\INF} \frac{h(t)}{s-t}dt $ is 
taken in the first variable and evaluated at $s = z \cdotp \btheta^{\perp}$. 
The function $h$  appeared first in the work of Natterer \cite{naterrerBook}; see also \cite{bomanStromberg} for elegant arguments that show how $h$ extends analytically (in the angular variable on the unit circle $\sph$) inside the unit disc.  We recall some properties of $h$ from \cite[Lemma 4.1]{sadiqScherzerTamasan}, while establishing notations.
%%%%%%%%%%%%%%%%%%%%%%%%%%%%%%%%%%%%%%%%%%%%%%%
\begin{lemma}\cite[Lemma 4.1]{sadiqScherzerTamasan}\label{hproperties}
Assume $\OM \subset \BR^2$ is $ C^{2,s}, s>1/2$, convex domain. For $p = 1,2$, let $a\in C^{p,s}(\ol \OM)$, $s>1/2$, and $h$ defined in \eqref{hDefn}. Then $h \in C^{p,s}(\ol \OM \times \sph)$ and the following hold \\
(i) $h$ satisfies \begin{align}\label{h_IntegratingFactor}
\btheta \cdot \nabla h(z,\btheta) = -a(z), \quad (z, \btheta) \in \OM \times \sph.
\end{align}
(ii) $h$ has vanishing negative Fourier modes yielding the expansions
\begin{align}\label{ehEq}
  e^{- h(z,\tta)} := \sum_{k=0}^{\INF} \alpha_{k}(z) e^{ik\fii}, \quad e^{h(z,\tta)} := \sum_{k=0}^{\INF} \beta_{k}(z) e^{ik\fii}, \quad (z, \tta) \in \ol\OM \times \sph,
\end{align}
with (iii)
\begin{align}\label{balpha_seq}
&z\mapsto \balpha(z) := \langle \alpha_{0}(z), \alpha_{1}(z), \alpha_{2}(z), \alpha_{3}(z), ... , \rangle \in C^{p,s}(\OM ; l_{1})\cap C(\ol\OM ; l_{1}), \\ \label{bbeta_seq}
&z\mapsto \bbeta(z) := \langle \beta_{0}(z), \beta_{1}(z), \beta_{2}(z), \beta_{3}(z), ... , \rangle \in C^{p,s}(\OM ; l_{1})\cap C(\ol\OM ; l_{1}).%\label{betasDecay}
%%%%%%%%%%%%%%%%%%%%%%%%%%%%%%%%%%%%%%%%
\end{align}
(iv) For any $z \in \OM $
\begin{align}\label{betazero_one}
&\ol{\del} \beta_0(z) = 0, \qquad \ol{\del} \beta_1(z) = -a(z) \beta_0(z),\\ \label{betak}
& \ol{\del} \beta_{k+2}(z) +\del \beta_{k}(z) +a(z) \beta_{k+1}(z)=0, \quad k \geq 0.
\end{align}
(v) For any $z \in \OM $
\begin{align}\label{alphazero_one}
&\ol{\del} \alpha_0(z) = 0, \qquad \ol{\del} \alpha_1(z) = a(z) \alpha_0(z),\\ \label{alphak}
& \ol{\del} \alpha_{k+2}(z) +\del \alpha_{k}(z) -a(z) \alpha_{k+1}(z)=0, \quad k \geq 0.
\end{align}
(vi) The Fourier modes $\alpha_{k}, \beta_{k}, k\geq 0$ satisfy
\begin{align}\label{alphabetaSys}
\alpha_0 \beta_0 =1, \quad \sum_{m=0}^{k} \alpha_{m}\beta_{k-m}=0, \quad k \geq 1.
\end{align}
\end{lemma}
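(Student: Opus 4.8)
The plan is to treat the six items in order, deriving (iv), (v) and (vi) as algebraic consequences of (i), (ii) and the regularity statement, so that the only genuinely analytic inputs are (i), the regularity, and the vanishing of the negative modes in (ii). For (i) I would differentiate \eqref{hDefn} along $\btheta$. Since $Da(z+s\btheta,\btheta)=\int_s^\infty a(z+\tau\btheta)\,d\tau$ (after reparametrization), the fundamental theorem of calculus gives $\btheta\cdot\nabla Da(z,\btheta)=-a(z)$. For the second term I observe that both $Ra(z\cdot\btheta^\perp,\btheta^\perp)$ and its Hilbert transform depend on $z$ only through $s=z\cdot\btheta^\perp$, and $\btheta\cdot\nabla s=\btheta\cdot\btheta^\perp=0$, so the directional derivative annihilates the whole $\tfrac12(I-iH)Ra$ term. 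This yields \eqref{h_IntegratingFactor}. The regularity $h\in C^{p,s}(\ol\OM\times\sph)$ follows from the mapping properties of $D$, $R$ and $H$ on Hölder spaces; the hypothesis $s>1/2$ is exactly what is needed for the Hilbert transform to preserve $C^{0,s}$ in the $s$-variable.

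The crux is (ii): I would show that $\tta\mapsto h(z,\tta)$ has vanishing negative Fourier modes, equivalently that it extends to a holomorphic function of $e^{i\tta}$ in the unit disc. Using the identity $Ra(z\cdot\btheta^\perp,\btheta^\perp)=Da(z,\btheta)+Da(z,-\btheta)$ one rewrites $h(z,\btheta)=\tfrac12\big[Da(z,\btheta)-Da(z,-\btheta)\big]+\tfrac{i}{2}\,H[Ra](z\cdot\btheta^\perp,\btheta^\perp)$. The role of $\tfrac12(I-iH)$ is then to cancel the negative-mode contribution: on the angular Fourier side the Hilbert transform in $s$ acts as multiplication by $-i\,\sgn$, so the combination projects onto nonnegative modes, which is precisely the analytic-extension phenomenon established in \cite{bomanStromberg} (see also \cite{naterrerBook}), and which I would invoke. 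Granting this, $e^{\pm h(z,\cdot)}$ is again holomorphic in $e^{i\tta}$, so only nonnegative modes survive and we obtain the expansions \eqref{ehEq}. I expect this to be the main obstacle, as it is the only place where the special structure of Finch's integrating factor, rather than generic properties of $D$, $R$, $H$, is used.

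Once (ii) is in hand, item (iii) follows from the smoothness of $h$: holomorphicity in the angular variable forces the modes $\alpha_k,\beta_k$ to decay geometrically, so $\balpha(z),\bbeta(z)\in l_1$, while the $C^{p,s}$ dependence in $z$ is inherited from $h$ by differentiating the defining integrals, giving membership in $C^{p,s}(\OM;l_1)\cap C(\ol\OM;l_1)$. For (iv) and (v) I would insert the expansions \eqref{ehEq} into the advection operator $\btheta\cdot\nabla=e^{-i\tta}\dba+e^{i\tta}\del$ and use (i) in the form $\btheta\cdot\nabla e^{h}=-a\,e^{h}$ and $\btheta\cdot\nabla e^{-h}=a\,e^{-h}$. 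Matching the coefficient of $e^{im\tta}$ gives $\dba\beta_{m+1}+\del\beta_{m-1}=-a\beta_m$; taking $m=0,-1$ with $\beta_{-1}=\beta_{-2}=0$ yields \eqref{betazero_one}, and $m=k+1\ge 1$ yields \eqref{betak}. The identical computation with the opposite sign produces \eqref{alphazero_one}--\eqref{alphak} for the $\alpha_k$.

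Finally, for (vi) I would multiply the two series in \eqref{ehEq} and use $e^{-h}e^{h}=1$. Matching the coefficient of $e^{ik\tta}$ in $\big(\sum_m\alpha_m e^{im\tta}\big)\big(\sum_j\beta_j e^{ij\tta}\big)=1$, and noting that both sums run over nonnegative indices, gives $\sum_{m=0}^{k}\alpha_m\beta_{k-m}=\delta_{k0}$, which is exactly \eqref{alphabetaSys}; the $l_1$ summability from (iii) justifies the term-by-term multiplication of the series.
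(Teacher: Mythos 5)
First, a point of comparison: the paper does not prove this lemma at all --- it is quoted verbatim from \cite[Lemma 4.1]{sadiqScherzerTamasan}, with the analytic-extension property attributed to \cite{naterrerBook,bomanStromberg} --- so your proposal has to stand on its own merits. Most of it does, and its architecture (derive (iv), (v), (vi) algebraically from (i), (ii) and the regularity) is the natural one: the directional-derivative computation for (i) is correct, since both $Ra(z\cdot\btheta^\perp,\btheta^\perp)$ and its Hilbert transform are constant along the flow of $\btheta\cdot\nabla$; the identity $Ra(z\cdot\btheta^\perp,\btheta^\perp)=Da(z,\btheta)+Da(z,-\btheta)$ and the resulting rewriting of $h$ are right; mode-matching in $\btheta\cdot\nabla e^{\pm h}=\mp a\, e^{\pm h}$ gives exactly \eqref{betazero_one}--\eqref{betak} and \eqref{alphazero_one}--\eqref{alphak}; and the Cauchy product of the two series in \eqref{ehEq} gives \eqref{alphabetaSys} once $l_1$ summability is known. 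Deferring the vanishing of the negative Fourier modes of $h$ to \cite{bomanStromberg} is also defensible (it is what the paper itself does), though your one-line justification is not a proof and conflates two different Fourier variables: $H$ acts in the affine variable $s$ with symbol $-i\sgn(\sigma)$ in the variable $\sigma$ dual to $s$, not on the angular mode index $n$; converting half-line support in $\sigma$ into vanishing negative angular modes is precisely the nontrivial content of the result you are citing.

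The genuine gap is in (iii). You claim that holomorphicity of $e^{\pm h(z,\cdot)}$ in $e^{i\tta}$ ``forces the modes to decay geometrically,'' hence $\balpha(z),\bbeta(z)\in l_1$. That is false: a function holomorphic in the open unit disc with H\"older-continuous boundary values has, in general, no geometric decay of its Taylor coefficients (geometric decay would require holomorphy in a strictly larger disc); boundary regularity $C^{0,s}$ only gives $|\alpha_k|=O(k^{-s})$, which is not summable. The correct tool is Bernstein's lemma \cite[Chap. I, Theorem 6.3]{katznelson}: a $C^{0,s}$ function on the circle with $s>1/2$ has an absolutely convergent Fourier series; applied to $e^{\pm h(z,\cdot)}$, and to its spatial derivatives for the $C^{p,s}(\OM;l_1)$ statement, it yields \eqref{balpha_seq}--\eqref{bbeta_seq}. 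This is where the hypothesis $s>1/2$ is actually consumed --- not, as you assert when discussing (i), in making the Hilbert transform bounded on $C^{0,s}$: Privalov's theorem gives that for every $0<s<1$. The misplacement matters, because as written your argument would ``prove'' (iii) for every $s>0$, which is more than Bernstein's theorem (sharp at $s=1/2$) can deliver. Note that the paper itself runs exactly this Bernstein argument when it establishes the stronger $l^{1,1}_{\INF}$ bounds on $\balpha,\bbeta$ in the Appendix (proof of Proposition \ref{eGprop}), so that is the fix your proof needs.
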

%%%%%%%%%%%%%%%%%%%%%%%%%%%%%%%%%%%%%%%%%%%%%%%

The Fourier coefficients of  $e^{\pm h}$  define the integrating operators $e^{\pm G} \bu $ component-wise for each $m \leq 0$ by 
\begin{align}\label{eGop}
(e^{-G} \bu )_m = (\balpha \ast \bu)_m = \sum_{k=0}^{\infty}\alpha_{k} u_{m-k}, \quad \text{and} \quad 
(e^{G} \bu )_m = (\bbeta \ast \bu)_m = \sum_{k=0}^{\infty}\beta_{k} u_{m-k},
\end{align} where $\alpha_k$ and $\beta_k$ are the Fourier modes of $e^{-h}$ and $e^{h}$ in \eqref{ehEq}, and $\balpha,\bbeta$ as in \eqref{balpha_seq}, respectively, \eqref{bbeta_seq}.
Note that $e^{\pm G} $ can also be written in terms of left translation operator as
\begin{align}\label{eGop_leftshift}
 e^{-G} \bu = \sum_{k=0}^{\infty}\alpha_{k}L^{k} \bu , \quad \text{and} \quad e^{G} \bu = \sum_{k=0}^{\infty}\beta_{k}L^{k} \bu,
\end{align} where $L^{k}=\underbrace{L\circ \cdots \circ L}_{k}$ is the $k$-th composition of left translation operator.
It is important to note that the operators $e^{\pm G}$ commute with the left translation, $ [e^{\pm G}, L]=0$.

Different from \cite{sadiqScherzerTamasan}, in this work we carry out the analysis in the Sobolev spaces   $l^{2,p}(\BN;H^q(\OM))$ with the respective norm $\lnorm{\cdot}_{p,q}$ given by 
 \begin{align}\label{spaces}
 l^{2,p}(\BN;H^q(\OM)) &: = \left \{\bu:\; \lnorm{\bu}_{p,q}^2 := \sum_{j=0}^{\INF} (1+j^2)^p \lnorm{ u_{-j} }^{2}_{ H^q(\OM)} < \INF \right \}.
\end{align}In Proposition \ref{eGprop} below we revisit the mapping properties of $e^{\pm G}$ relative to these new spaces.   

Throughout, in the notation of the norms, the first index  $p\in \{0,\frac{1}{2},1\}$  refers to the smoothness in the angular variable (expressed as decay in the Fourier coefficient), while the second index $q\in \{0,1\}$ shows the smoothness in the spatial variable. The most often occurring is the space  $l^2(\BN;L^2(\OM))$, when $p=q=0$. To simplify notation, in this case we drop the double zero subindexes, 
\begin{align*}
 \lnorm{\bu }^2:= \lnorm{\bu}_{0,0}^2=\sum_{j=0}^{\INF} \lnorm{u_{-j}}^{2}_{ L^2(\OM)}.
\end{align*}

The traces on the boundary $\Gam$ of functions in $l^{2,p}(\BN;H^1(\OM))$ are in $ l^{2,p}(\BN;H^{\frac{1}{2}}(\Gam))$, endowed with the norm 
\begin{align} \label{phalfonGamma}
\lnorm{\bg}_{p,\frac{1}{2}}^2 := \sum_{j=0}^{\INF} (1+j^2)^p \lnorm{ g_{-j} }^{2}_{ H^{1/2}(\Gam)}.
\end{align}

Since $\Gamma$ is the unit circle, the $H^{1/2}(\Gam)$-norm can be defined in the Fourier domain as follows. For each integer $j \geq 0$, if we consider the Fourier expansion of the trace $\ds u_{-j} \lvert_{\Gam}$, 
\begin{align*}
\left. u_{-j} \right|_{\Gamma}( e^{i\beta}) = \sum_{k= -\infty}^\infty u_{-j,k}e^{ik\beta}, \quad \text{for} \quad e^{i\beta}\in\Gam,
\end{align*} 
then
\begin{align}\label{onehalfonGamma}
\lnorm{ u_{-j} }^2_{ H^{1/2}(\Gam)}=\sum_{k=-\infty}^\infty(1+k^2)^\frac{1}{2}|u_{-j,k}|^2.
\end{align}
In view of \eqref{onehalfonGamma}, if $\bg\in l^{2,\frac{1}{2}}(\BN;H^{\frac{1}{2}}(\Gam))$, then
\begin{align}\label{unorm1}
\lnorm{\bg}_{\frac{1}{2},\frac{1}{2}}^2  =  \sum_{j=0}^{\INF} \sum_{n=-\INF}^{\INF}  (1+j^2)^\frac{1}{2}(1+n^2)^\frac{1}{2} \lvert g_{-j,n} \rvert^{2}.
\end{align}

%%%%%%%%%%%%%%%%%%%%%%%%%%%%%%%%%%%%%%%%%%%%%%%
In the estimates we need the following variant of the Poincar\' e inequality obtained by component-wise summation: If $\bu \in l^{2}(\BN;H^1(\OM))$, then  
\begin{align}\label{poincare}
\lnorm{\bu}^2  \leq \mu \left( \lnorm{\del \bu}^2 + \lnorm{\bu \lvert_{\Gam} }_{0,\frac{1}{2}}^2 \right),
\end{align} where $\mu$ is a constant depending only on $\OM$; for the unit disc $\mu=2$.

%%%%%%%%%%%%%%%%%%%%%%%%%%%%%%%%%%%%%%%%%%%%%%%
Consider the Banach space 
\begin{align} \label{lone1infdefn}
 l^{1,1}_{\INF}(\ol \OM): = \left \{ \balpha := \langle \alpha_{0}, \alpha_{1}, \alpha_{2}, ... , \rangle: \lnorm{\balpha}_{l^{1,1}_{\INF}(\ol \OM)}:= \sup_{z \in \ol \OM }\sum_{j=1}^{\INF}  j \lvert \alpha_{j}(z) \rvert < \INF \right \}.
\end{align} 
%%%%%%%%%%%%%%%%%%%%%%%%%%%%%%%%%%%%%%%%
%%%%%%%%%%%%%%%%%%%%%%%%%%%%%%%%%%%%%%%%
\begin{prop}\label{eGprop}
Let $a\in C^{2,s}(\ol \OM)$, $s>1/2$. Then $\balpha , \del \balpha, \bbeta , \del \bbeta \in l^{1,1}_{\INF} (\ol \OM)$, and 
for any  $p\in \{0,\frac{1}{2},1\}$, $q\in \{0,1\}$, the operators
\begin{align}\label{eGmaps}
 &e^{\pm G}:l^{2,p}(\BN;H^q(\OM))\to l^{2,p}(\BN;H^q(\OM)) 
\end{align} are bounded, and satisfy the following estimates
%The norm estimates are as follows 
\begin{align}\label{eG_norm0}
 \lnorm{e^{-G}\bu}   &\leq   \lnorm{\balpha}_{l^{1,1}_{\INF}(\ol \OM)} \lnorm{\bu},  \\ \label{eG_norm_onezero}
 \lnorm{e^{-G}\bu}_{1,0} &\leq 2 \lnorm{\balpha}_{l^{1,1}_{\INF}(\ol \OM)} \lnorm{\bu}_{1,0},  \\ \label{eG_norm_zeroone}
 \lnorm{e^{-G}\bu}_{0,1} & \leq  \left( \lnorm{\balpha}_{l^{1,1}_{\INF}(\ol \OM)} + \lnorm{\del \balpha}_{l^{1,1}_{\INF}(\ol \OM)} \right) \lnorm{\bu}_{0,1}, \\ \label{eG_norm_oneone}
 \lnorm{e^{-G}\bu}_{1,1} & \leq 2\left( \lnorm{\balpha}_{l^{1,1}_{\INF}(\ol \OM)} + \lnorm{\del \balpha}_{l^{1,1}_{\INF}(\ol \OM)} \right) \lnorm{\bu}_{1,1}.
\end{align} The same estimate works for $e^{G}\bu$ with $\balpha$ replaced by $\bbeta$.
\end{prop}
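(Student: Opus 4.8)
The plan is to separate the statement into (a) the membership $\balpha,\del\balpha,\bbeta,\del\bbeta\in l^{1,1}_{\INF}(\ol\OM)$, which is a decay statement for the angular Fourier coefficients of $e^{\pm h}$ and their spatial derivatives, and (b) the four operator bounds, all of which I would read off from the single representation $e^{-G}\bu=\sum_{k\ge0}\alpha_{k}L^{k}\bu$ in \eqref{eGop_leftshift} (and its analogue for $e^{G}$ with $\bbeta$). The engine for (b) is that $L$ is a contraction on each $l^{2,p}(\BN;H^q(\OM))$: since $(L^{k}\bu)_{-j}=u_{-(j+k)}$ and the weights $(1+j^2)^{p}$ increase in $j$, shifting can only lower the weight, and multiplication by the scalar $\alpha_{k}(z)$ costs the factor $\lnorm{\alpha_k}_{L^\infty(\OM)}$; thus $e^{-G}$ is, componentwise, a convolution in the shift index against $\{\lnorm{\alpha_k}_\infty\}$, controlled by a Young/Cauchy--Schwarz inequality in $k$.

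I would dispatch the membership first, as everything else leans on it. By Lemma \ref{hproperties}, $h\in C^{2,s}(\ol\OM\times\sph)$ has vanishing negative angular modes, so $e^{\pm h}(z,\cdot)$ is $C^{2,s}$ on $\sph$ uniformly in $z$; the H\"older--Fourier correspondence then gives $\sup_{z}|\alpha_k(z)|+\sup_z|\beta_k(z)|\le C\,k^{-(2+s)}$, and $\sum_k k\cdot k^{-(2+s)}=\sum_k k^{-(1+s)}<\INF$ is precisely $\balpha,\bbeta\in l^{1,1}_{\INF}(\ol\OM)$. The genuinely delicate point---and the step I expect to be the main obstacle---is $\del\balpha,\del\bbeta\in l^{1,1}_{\INF}$. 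A naive count is not enough: $\del h$ is only $C^{1,s}$ in the angular variable, since the mixed third--order derivative $\partial_\tta^2\del h$ is not controlled by $C^{2,s}$, so the product rule yields merely $\sup_z|\del\alpha_k(z)|\le C\,k^{-(1+s)}$ and hence $\sum_k k|\del\alpha_k|\sim\sum_k k^{-s}$, which diverges for $s\le 1$. To obtain the weighted summability under the stated regularity I would instead exploit the structural recursions \eqref{alphak}, which read $\del\alpha_{k}=a\,\alpha_{k+1}-\dba\alpha_{k+2}$ (and \eqref{betak} for $\bbeta$): these let me bound the weighted $l^1$ sums of $\del\balpha$ and $\dba\balpha$ together, absorbing the multiplication--by--$a$ terms with the already established bound on $\balpha\in l^{1,1}_{\INF}$, while the interior angular analyticity of $h$ (Boman--Str\"omberg) supplies the decay and one controls the behaviour uniformly up to $\bdom$. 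Closing this coupled system uniformly on $\ol\OM$ is the crux.

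Granting membership, I would obtain the four estimates as follows. For \eqref{eG_norm0} ($p=q=0$) fix $z$ and use Cauchy--Schwarz with weights $|\alpha_k(z)|$, $\big(\sum_k|\alpha_k||u_{-j-k}|\big)^2\le\big(\sum_k|\alpha_k|\big)\big(\sum_k|\alpha_k||u_{-j-k}|^2\big)$, then integrate in $z$ and sum in $j$ using $\sum_j|u_{-j-k}(z)|^2\le\lnorm{\bu(z)}^2_{l^2}$; this gives the bound with $\sup_z\sum_k|\alpha_k(z)|$, which is controlled by $\lnorm{\balpha}_{l^{1,1}_{\INF}(\ol\OM)}$ (the leftover $\alpha_0$ term being bounded by $\lnorm{e^{-h}}_\infty$). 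The angular--weighted cases \eqref{eG_norm_onezero}, \eqref{eG_norm_oneone} ($p=1$) use $(1+j^2)^{1/2}\le(1+(j+k)^2)^{1/2}+(1+k^2)^{1/2}$ to distribute the angular weight of the $j$-th output between the input $\bu$ and the shift index $k$; summing $(1+k^2)^{1/2}\lnorm{\alpha_k}_\infty$ against $k$ produces $\lnorm{\balpha}_{l^{1,1}_{\INF}(\ol\OM)}$, and the two resulting summands account for the constant $2$. The spatial--weighted cases \eqref{eG_norm_zeroone}, \eqref{eG_norm_oneone} ($q=1$) come from the product rule $\del\big(\sum_k\alpha_k u_{-j-k}\big)=\sum_k(\del\alpha_k)u_{-j-k}+\sum_k\alpha_k\,\del u_{-j-k}$: the first sum is of $e^{-G}$ type with $\balpha$ replaced by $\del\balpha$, the second is $e^{-G}$ applied to $\del\bu$, producing the additive $\lnorm{\balpha}_{l^{1,1}_{\INF}(\ol\OM)}+\lnorm{\del\balpha}_{l^{1,1}_{\INF}(\ol\OM)}$. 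The identical computation with $\bbeta$ in place of $\balpha$, using \eqref{betak}, handles $e^{G}$, and throughout the interchange of $\del,\dba$ with the infinite sums (and the commutation $[e^{\pm G},L]=0$) is justified by the uniform $l^{1,1}_{\INF}$ bounds secured in the first step.
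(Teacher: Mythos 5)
There is a genuine gap, and it sits exactly at the step you yourself flag as the crux: the membership $\del\balpha,\del\bbeta\in l^{1,1}_{\INF}(\ol\OM)$. Your proposed repair via the recursions \eqref{alphak} cannot be closed. The relation $\del\alpha_{k}=a\,\alpha_{k+1}-\dba\alpha_{k+2}$ is \emph{one} linear equation per index $k$ in the \emph{two} unknown families $\{\del\alpha_k\}$ and $\{\dba\alpha_k\}$; any attempt to bound $\sum_k k|\dba\alpha_{k+2}|$ must invoke the same relation and therefore needs $\sum_k k|\del\alpha_k|$ back again. Since $\del$ and $\dba$ are independent first-order operators, and the recursions are nothing but the mode-by-mode restatement of the single scalar equation \eqref{h_IntegratingFactor} for $e^{-h}$, the "coupled system" is underdetermined: it contains no information that could pin down the full spatial gradient of the $\alpha_k$'s, so no a priori bound for either family can come out of it. Moreover, the Boman--Str\"omberg interior angular analyticity you invoke yields decay on compact subsets of $\OM$ with constants that degenerate at $\Gam$, so it cannot supply the uniformity up to $\ol\OM$ that the norm $\lnorm{\cdot}_{l^{1,1}_{\INF}(\ol\OM)}$ demands.

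The missing idea is Bernstein's lemma \cite[Chap.~I, Theorem 6.3]{katznelson}, which is precisely what the paper uses (here and again in Lemma \ref{lem_propk}). Your reason for abandoning the direct route conflates pointwise decay with absolute summability: from $\del e^{-h}(z,\cdot)\in C^{1,s}(\sph)$ you only extracted $|\del\alpha_k|\le Ck^{-(1+s)}$, which indeed fails to sum against the weight $k$ when $s\le 1$; but Bernstein's lemma asserts that a H\"older-$\alpha$ function on the circle with $\alpha>1/2$ has \emph{absolutely summable} Fourier coefficients, controlled by its H\"older norm, with no detour through pointwise decay. Since $a\in C^{2,s}$ gives $e^{\pm h}\in C^{2,s}(\ol\OM\times\sph)$ by Lemma \ref{hproperties}, the function $\del_\tta\del e^{-h}(z,\cdot)$ lies in $C^{0,s}(\sph)$ with $s>1/2$, uniformly in $z\in\ol\OM$, and its Fourier coefficients are $ik\,\del\alpha_k(z)$; Bernstein then gives $\sup_{z}\sum_k k|\del\alpha_k(z)|\le C\sup_z\lnorm{\del e^{-h}(z,\cdot)}_{C^{1,s}}<\INF$, and likewise for $\bbeta$. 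This single observation replaces your entire second paragraph. The rest of your proposal (the four operator bounds) is sound and essentially the paper's argument: Young's inequality for the discrete convolution in the case $p=q=0$, and the product-rule splitting into $\del\balpha\ast\bu$ and $\balpha\ast\del\bu$ for $q=1$. For $p=1$ your weight-splitting $(1+j^2)^{1/2}\le(1+(j+k)^2)^{1/2}+(1+k^2)^{1/2}$ is valid, and in fact more than needed, since $(e^{-G}\bu)_{-j}=\sum_k\alpha_k u_{-j-k}$ only reaches deeper modes and monotonicity of the weights already suffices; this elementary route also covers $p=\tfrac12$, which the paper obtains by interpolation and which your detailed estimates never address, whereas the paper proves the $p=1$ cases via Plancherel and the angular derivative $\del_\tta(e^{-h}u)$. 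Your remark that the $l^{1,1}_{\INF}$ norm does not control the $\alpha_0$ term is a fair point that the paper's own write-up glosses over.
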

%%%%%%%%%%%%%%%%%%%%%%%%%%%%%%%%%%%%%%%%
The proof of the Proposition \ref{eGprop} can be found in the Appendix.

We remark that cases $(p=0=q)$, $(p=1, q=0)$, and $(p=0, q=1)$ in Proposition \ref{eGprop}  hold for $a\in C^{1,s}(\ol \OM)$, $s>1/2$, and these are the only properties needed for the lemma below.
%%%%%%%%%%%%%%%%%%%%%%%%%%%%%%%%%%%%%%%%
\begin{lemma}\label{beltrami_reduction}
Let $a\in C^{1,s}(\ol \OM)$, $s>1/2$, and $e^{\pm G}$ as defined in \eqref{eGop}. 

(i) If $\bu\in l^{2}(\BN;H^1(\OM))$ solves $\ds \dba \bu +L^2 \del \bu+ aL\bu = 0$, then  $\ds \bv= e^{-G} \bu \in l^{2}(\BN;H^1(\OM))$ solves $\dba \bv + L^2\del \bv =0$.

(ii) Conversely, if $\bv\in l^{2}(\BN;H^1(\OM))$ solves $\dba \bv + L^2\del \bv =0$, then $\ds \bu= e^{G} \bv \in l^{2}(\BN;H^1(\OM))$ solves $\ds \dba \bu +L^2 \del \bu+ aL\bu = 0$.
\end{lemma}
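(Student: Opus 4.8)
The plan is to prove the two implications by direct substitution, exploiting the single crucial structural fact that the integrating operators $e^{\pm G}$ commute with the left translation $L$, that is $[e^{\pm G}, L]=0$, together with the first-order relations for the Fourier modes $\alpha_k, \beta_k$ recorded in Lemma \ref{hproperties}(iv)--(v). First I would verify the mapping claim: since $a\in C^{1,s}(\ol\OM)$, Proposition \ref{eGprop} (in the cases $p=0=q$, $p=1,q=0$, $p=0,q=1$ noted after its statement) guarantees that $e^{\pm G}$ maps $l^{2}(\BN;H^1(\OM))$ boundedly into itself, so $\bv = e^{-G}\bu$ and $\bu = e^{G}\bv$ indeed lie in the stated space. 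The real content is that the equation transforms correctly.

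For part (i), I would start from $\bv = e^{-G}\bu = \sum_{k\geq 0}\alpha_k L^k \bu$ as in \eqref{eGop_leftshift}, and compute $\dba\bv + L^2\del\bv$ by applying the Cauchy--Riemann operators termwise. Using the product rule, $\dba\bv = \sum_{k}(\dba\alpha_k)L^k\bu + \sum_k \alpha_k L^k \dba\bu$, and similarly for $\del$. Because $e^{-G}$ commutes with $L$, the terms carrying derivatives of $\bu$ reassemble into $e^{-G}(\dba\bu + L^2\del\bu)$, which by the hypothesis on $\bu$ equals $-e^{-G}(aL\bu) = -a\, e^{-G}L\bu = -a\, L\, e^{-G}\bu$. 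The terms carrying derivatives of the coefficients reassemble, after reindexing, into $\sum_k(\dba\alpha_k + \text{(shifted }\del\alpha\text{ contributions)})L^{\,?}\bu$; here I would substitute the relations \eqref{alphazero_one}--\eqref{alphak}, namely $\dba\alpha_0=0$, $\dba\alpha_1 = a\alpha_0$, and $\dba\alpha_{k+2} + \del\alpha_k - a\alpha_{k+1}=0$. The $-a\alpha_{k+1}$ contributions are designed to cancel exactly against the $-a\,L\,e^{-G}\bu$ term produced above, leaving $\dba\bv + L^2\del\bv = 0$. Part (ii) is entirely parallel, using instead $\bu = e^{G}\bv = \sum_k\beta_k L^k\bv$, the hypothesis $\dba\bv + L^2\del\bv=0$, and the relations \eqref{betazero_one}--\eqref{betak} for $\beta_k$; there the sign conventions are arranged so that the coefficient-derivative terms generate precisely the $+aL\bu$ term.

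The step I expect to require the most care is the bookkeeping of the $L^2\del$ term relative to the index shifts, since $L$ acts as a shift and the convolution $\sum_k\alpha_k L^k$ interacts with $L^2$ by reindexing $k\mapsto k+2$; one must check componentwise that the shifted indices in $\del\alpha_k$ coming from $L^2\del\bv$ line up with the three-term recursions \eqref{alphak}, \eqref{betak} rather than merely matching formally. Because the series are infinite, I would justify the termwise differentiation and rearrangement using the absolute convergence guaranteed by $\balpha,\del\balpha,\bbeta,\del\bbeta \in l^{1,1}_\INF(\ol\OM)$ from Proposition \ref{eGprop}, so that all manipulations take place in $l^2(\BN;L^2(\OM))$ with no convergence obstruction. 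Once the cancellation is organized componentwise, both implications follow; the two directions are genuinely symmetric, reflecting that $e^{G}$ and $e^{-G}$ are inverse to each other via the Cauchy product identity \eqref{alphabetaSys}.
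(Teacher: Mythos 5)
Your proposal is correct and follows essentially the same route as the paper's proof: termwise application of $\dba$ and $L^2\del$ to $\bv=\sum_k\alpha_k L^k\bu$, reassembly of the $\bu$-derivative terms via $[e^{-G},L]=0$ and the hypothesis $\dba\bu+L^2\del\bu=-aL\bu$, and cancellation of the remaining coefficient terms through \eqref{alphazero_one}--\eqref{alphak} (and symmetrically \eqref{betazero_one}--\eqref{betak} for the converse), with the mapping claim handled by Proposition \ref{eGprop} exactly as in the paper. Your added remark that the rearrangements are justified by $\balpha,\del\balpha,\bbeta,\del\bbeta\in l^{1,1}_{\INF}(\ol\OM)$ is a sound (and slightly more careful) touch that the paper leaves implicit.
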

%%%%%%%%%%%%%%%%%%%%%%%%%%%%%%%%%%%%%%%%
\begin{proof}
(i) Let $\ds \bv = e^{-G} \bu = \sum_{k=0}^{\infty}\alpha_{k}L^{k} \bu$.  Since $\bu\in l^{2}(\BN;H^1(\OM))$, then from Proposition \ref{eGprop},  $\ds \bv \in l^{2}(\BN;H^1(\OM))$.
Then $\bv$ solves
\begin{align*}
\dba \bv + L^2\del \bv&= \sum_{k=0}^{\infty} \dba \alpha_{k}L^{k} \bu + \sum_{k=0}^{\infty}\alpha_{k}L^{k}  \dba \bu + \sum_{k=0}^{\infty} \del \alpha_{k}L^{k+2} \bu + \sum_{k=0}^{\infty}\alpha_{k}L^{k+2}  \del \bu \\
&=  \dba \alpha_0 \bu +\dba \alpha_1 L\bu + \sum_{k=0}^{\infty} \left(\dba \alpha_{k+2} + \del \alpha_{k} \right) L^{k+2} \bu + \sum_{k=0}^{\infty}\alpha_{k} L^{k} \left(\dba \bu +L^2 \del \bu \right) \\
&=   \dba \alpha_0 \bu + \dba \alpha_1 L\bu + \sum_{k=0}^{\infty} \left(\dba \alpha_{k+2} + \del \alpha_{k} \right) L^{k+2} \bu + \sum_{k=0}^{\infty}\alpha_{k} L^{k} \left(-a L \bu \right) \\
&=   \dba \alpha_0 \bu + \left(\dba \alpha_1 -a \alpha_0 \right)L\bu + \sum_{k=0}^{\infty} \left(\dba \alpha_{k+2} + \del \alpha_{k} - a \alpha_{k+1} \right) L^{k+2} \bu =0,
\end{align*} where in the last equality we have used \eqref{alphazero_one} and \eqref{alphak}.

An analogue calculation using the properties in Lemma \ref{hproperties} (iv) shows the converse.

\end{proof}

%%%%%%%%%%%%%%%%%%%%%%%%%%%%%%%%%%%%%%%%%%%%%%%%%%%%%%%%%%%%%%%%%

%%%%%%%%%%%%%%%%%%%%%%%%%%%%%%%%%%%%%%%%%%%%%%%%%%%%%%%%%%%%%%
\section{Source reconstruction for scattering of polynomial type}\label{Sec:Sourcerec_poly}

This section contains the basic idea of reconstruction in the special case of scattering kernel of polynomial type, 
\begin{align}\label{particular_kM}
 k(z,\cos \tta)  = \sum_{n=0}^{M} k_{-n}(z) \cos (n \tta), 
\end{align} for some fixed integer $M\geq1$. 
Recall that since $k(z, \cos \tta)$ is both real valued and even in $\tta$,  $k_n(z)$ are real valued and $k_n(z)=k_{-n}(z)$, $0 \leq n \leq M$.
We stress here that no smallness assumption on $k_{0}, k_{-1}, k_{-2}, \cdots, k_{-M}$ is assumed. 
Let $u^{(M)}$ be the solution of \eqref{TransportScatEq1} with $k$ as in \eqref{particular_kM} and $\bu^{(M)}$ denote the sequence valued map 
\begin{align}\label{buM}
 \OM \ni z\mapsto  \bu^{(M)}(z): = \langle u^{(M)}_{0}(z), u^{(M)}_{-1}(z),u^{(M)}_{-2}(z), \cdots, u^{(M)}_{-M}(z), u^{(M)}_{-M-1}(z), \cdots  \rangle.
\end{align}
Let also $K^{(M)}$ denote the corresponding Fourier multiplier operator
\begin{align}\label{KMmultiplier}
K^{(M)}\bu^{(M)}(z)=(k_0(z) u_0^{(M)}(z),k_{-1}(z)u_{-1}^{(M)}(z),\cdots, k_{-M}(z)u_{-M}^{(M)}(z), 0,0,\cdots).
\end{align}
%%%%%%%%%%%%%%%%%%%%%%%%%%%%%%%%%%%%%%%%%%%%%%%%%%%%%%%%%%%%%%%%%%%%%%%%
The transport equation \eqref{TransportScatEq1} reduces to the system
\begin{align}\label{freeMeq}
\overline{\del u^{(M)}_{-1}(z)} +\del u^{(M)}_{-1}(z) + a(z)u^{(M)}_{0}(z) &= k_{0}(z)u^{(M)}_{0}(z)+f^{(M)}(z), \\ \label{fMinsys}
\dba u^{(M)}_{-n}(z) +\del u^{(M)}_{-n-2}(z) + a(z)u^{(M)}_{-n-1}(z) &= k_{-n-1}(z)u^{(M)}_{-n-1}(z),\quad 0 \leq n\leq M-1, \\ \label{infMsys}
\dba u^{(M)}_{-n}(z) +\del u^{(M)}_{-n-2}(z) + a(z)u^{(M)}_{-n-1}(z) &=0,\qquad n\geq M.
\end{align}
%%%%%%%%%%%%%%%%%%%%%%%%
In sequence valued notation, the system \eqref{fMinsys} and \eqref{infMsys} rewrites:
\begin{align}\label{beltramiA_polyVM}
\dba\bu^{(M)} +L^2 \del\bu^{(M)}+ a(z)L\bu^{(M)} = L K^{(M)}\bu^{(M)},
\end{align} where $K^{(M)}$ as in \eqref{KMmultiplier}. 

Since $f\in H^1(\Omega)$, the solution $u\in H^1(\OM\times\sph)$, and consequently $\bu^{(M)} \in l^{2,1}(\BN;H^1(\OM))$. We note that in our method we only use $\bu^{(M)} \in l^{2,\frac{1}{2}}(\BN;H^1(\OM))$, indicating that it may apply to rougher sources.

Let the transformation $\bv^{(M)}= e^{-G}L^M\bu^{(M)}$, then by Proposition \ref{eGprop}, $\bv^{(M)} \in l^{2,\frac{1}{2}}(\BN;H^1(\OM))$, and $\bv^{(M)}$ is $L^2$-analytic:
\begin{equation}\label{beltrami_VM}
\dba \bv^{(M)} +L^2 \del \bv^{(M)} = 0.
\end{equation}
The trace of the boundary $\bv^{(M)} \lvert_{\Gam}$ is determined by the trace of $\bu^{(M)}\lvert_{\Gam} = \bg = (g_0, g_{-1},g_{-2},...)\in l^{2,\frac{1}{2}}(\BN;H^{1/2}(\Gam)) $,  by
\begin{align}\label{w_boundary}
 \bv^{(M)} \lvert_{\Gam} = e^{-G} L^M \bu^{(M)} \lvert_{\Gam}=e^{-G} L^M \bg.
\end{align} By Proposition \ref{eGprop}, $\bv^{(M)} \lvert_{\Gam}\in l^{2,\frac{1}{2}}(\BN;H^{1/2}(\Gam)) $.

The Bukhgeim-Cauchy integral formula \eqref{CauchyBukhgeimformula} extends $\bv^{(M)}$ from $\Gam$ to $\OM$ as $L^2$-analytic map. 
From the uniqueness of an $L^2$-analytic map with a given trace, we recovered for $n \geq 0,$
%\begin{align}\label{w_inside}
%&v^{(M)}_{-n}(z)= \B [ \bv^{(M)} \lvert_{\Gam} ]_{-n}(z), \quad z\in \OM, \; n \geq 0,
%\end{align} where $\B$ is the Bukhgeim-Cauchy integral operator defined by \eqref{CauchyBukhgeimformula}.
\begin{align}\label{w_inside}
v^{(M)}_{-n}(z)&=\frac{1}{2\pi i}\int_\Gam\frac{v^{(M)}_{-n}(\zeta)}{\zeta-z}d\zeta 
+ \frac{1}{2\pi i} \int_\Gam \left\{
\frac{d\zeta}{\zeta-z} - \frac{d\ol{\zeta}}{\ol{\zeta}-\ol{z}}
\right\}\sum_{j=1}^\infty v^{(M)}_{-n-2j}(\zeta)\left(\frac{\ol{\zeta-z}}{\zeta-z}\right)^j,z\in\OM.
\end{align}
Thus $\bv^{(M)}=\langle v^{(M)}_{0}, v^{(M)}_{-1}, v^{(M)}_{-2}, ... \rangle$ is recovered in $l^{2,\frac{1}{2}}(\BN;H^1(\OM))$.

We recover $L^M \bu^{(M)} = \langle u^{(M)}_{-M}, u^{(M)}_{-M-1}, u^{(M)}_{-M-2}, ... \rangle$ in $\OM$ by using the convolution formula \eqref{eGop}
\begin{align}\label{LMv_interms_w}
 u^{(M)}_{-n-M}(z) = \sum_{k=0}^{\INF} \beta_k(z) v^{(M)}_{-n-k}(z), \quad z\in \OM, \; n \geq 0,
\end{align} where $\beta_k$'s as in \eqref{ehEq}.
In particular we recovered $u^{(M)}_{-M-1}, u^{(M)}_{-M}\in H^1(\OM)$.

By applying $4 \del $ to \eqref{fMinsys}, the mode $u^{(M)}_{-M+1}$ is then the solution to the Dirichlet problem for the  Poisson equation 
\begin{subequations} \label{Transport_VM}
\begin{align} \label{Poisson_VM}
\Laplace u^{(M)}_{-M+1} &= -4\del^2 u^{(M)}_{-M-1} -4\del \left[ (a- k_{-M}) u^{(M)}_{-M} \right],\\ 
\label{VM_Gam}  u^{(M)}_{-M+1} \lvert_{\Gam} &= g_{-M+1},
\end{align} where the right hand side of \eqref{Poisson_VM} is known.
\end{subequations}

Since by construction $u^{(M)}_{-M}, u^{(M)}_{-M-1}\in H^1(\OM)$, we have
\begin{align*}
\lnorm{ \del^2 u^{(M)}_{-M-1} + \del \left((a- k_{-M}) u^{(M)}_{-M} \right) }_{H^{-1}(\OM)}^2 &\leq \lnorm{ \del u^{(M)}_{-M-1}}_{L^2(\Omega)}^2+ \lnorm{\left((a- k_{-M}) u^{(M)}_{-M} \right) }_{L^2(\Omega)}^2\\
&\leq \lnorm{ \del u^{(M)}_{-M-1}}_{L^2(\Omega)}^2+  \lnorm{a- k_{-M}}_{L^\infty(\Omega)}^2 \lnorm{ u^{(M)}_{-M} }_{L^2(\Omega)}^2 \\
&\leq \lnorm{ u^{(M)}_{-M-1}}_{H^1(\Omega)}^2 + \lnorm{a- k_{-M}}_{L^\infty(\Omega)}^2 \lnorm{ u^{(M)}_{-M} }_{H^1(\Omega)}^2.
\end{align*}
Since $g_{-M+1} \in H^{1/2}(\Gam)$, the solution $u^{(M)}_{-M+1} \in H^{1}(\OM)$ and 
\begin{align}\label{uM_est}
 \lnorm{ u^{(M)}_{-M+1} }_{H^1(\OM)}^2 \leq C \left( \lnorm{u^{(M)}_{-M-1}}_{H^{1}(\OM)}^2 +  \lnorm{u^{(M)}_{-M}}_{H^1(\Omega)}^2 + \lnorm{g_{-M+1}}_{H^{1/2}(\Gam)}^2      \right),
\end{align} where the constant $C$ depends only on $\OM$ and 
$\ds \max \left\{ 1, \max_{0\leq j \leq M}    \lnorm{a- k_{-j}}_{L^\infty(\Omega)}^2  \right\}$.
Successively all the other modes $u^{(M)}_{-M+j}$ for $j = 2, \cdots, M$ are computed by solving the corresponding Dirichlet problem for the  Poisson equation.
%%%%%%%%%%%%%%%%%%%%%%%%%%%%%%%%%%%%%%%%%%%%%%%%%%%%%%%%%%%%%%%%%%%%%%
To account for the successive accumulation of error we note the following result which can be proven by induction.
\begin{lemma}\label{rec_result}
Let $\{a_n\}$ and $\{b_n\}$  be sequences of nonnegative numbers, such that 
\begin{align*}
a_{n+2} \leq c \left(a_{n+1} +a_{n} +b_{n+2} \right), \quad n \geq 0,
\end{align*} where $c>0$ is a constant, then 
\begin{align*}
a_{n+2} \leq (1+c)^{n+2} \left(a_{1} +a_{0}  + \sum_{k=0}^{n} b_{k+2} \right), \quad n \geq 0.
\end{align*} 
\end{lemma}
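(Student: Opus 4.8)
The plan is to prove the bound by induction on $n$, using the two preceding instances of the statement (the recursion is second order), after recording a simple monotonicity observation. Write $B_n := a_0 + a_1 + \sum_{k=0}^n b_{k+2}$ for the bracketed quantity on the right-hand side. Since the $b_k$ are nonnegative, $n\mapsto B_n$ is nondecreasing, and moreover $B_n$ dominates each of its own summands, so in particular $a_0\le B_n$, $a_1\le B_n$, and $b_{j+2}\le B_n$ for every $0\le j\le n$. These elementary facts are exactly what let me replace the various $B_m$ with $m<n$ that appear during the induction by the single quantity $B_n$, at the cost only of keeping the largest power of $(1+c)$.

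For the base cases I would check $n=0$ and $n=1$ directly, since for small $n$ the target $a_{n+2}\le (1+c)^{n+2}B_n$ involves the genuinely initial data $a_0,a_1$, which are not covered by the hypothesis. For $n=0$ the assumption reads $a_2\le c(a_1+a_0+b_2)=cB_0$, and $c\le(1+c)^2$ gives the claim. For $n=1$ I use $a_3\le c(a_2+a_1+b_3)$, insert the bound $a_2\le(1+c)^2B_0\le(1+c)^2B_1$ just established, and estimate $a_1,b_3\le B_1$; the claim then follows from $c\bigl((1+c)^2+2\bigr)\le(1+c)^3$, which reduces to $0\le 1+c^2$.

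For the inductive step with $n\ge 2$, I feed the two preceding instances into the recursion. From $a_{n+2}\le c(a_{n+1}+a_n+b_{n+2})$, the induction hypotheses applied at the indices $n-1$ and $n-2$ give $a_{n+1}\le(1+c)^{n+1}B_{n-1}$ and $a_n\le(1+c)^nB_{n-2}$; the monotonicity $B_{n-1},B_{n-2}\le B_n$ together with $b_{n+2}\le B_n$ collapses everything onto $B_n$ and yields
\begin{align*}
a_{n+2}\le c\bigl((1+c)^{n+1}+(1+c)^n+1\bigr)B_n.
\end{align*}
It then remains to verify the purely algebraic inequality $c\bigl((1+c)^{n+1}+(1+c)^n+1\bigr)\le(1+c)^{n+2}$. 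Setting $t=1+c$ and expanding, the left-hand side equals $t^{n+2}+t-t^n-1$, so the inequality is equivalent to $t-1\le t^n$, i.e. $c\le(1+c)^n$, which holds for all $n\ge 1$ since $(1+c)^n\ge 1+nc>c$.

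The only real obstacle is bookkeeping rather than analysis: one must be careful that the second-order recursion forces two separate base cases, that the inductive hypothesis is invoked at both $n-1$ and $n-2$, and that the initial terms $a_0,a_1$ are absorbed into $B_n$ without picking up an extra power of $(1+c)$. Once the reduction to $c\le(1+c)^n$ is isolated, the estimate is immediate and there is no genuine difficulty.
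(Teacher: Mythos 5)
Your proof is correct: the two base cases, the two-step induction hypothesis, and the reduction of the inductive step to the elementary inequality $c \le (1+c)^n$ all check out. The paper itself states the lemma "can be proven by induction" without giving details, and your argument is precisely that intended induction, carried out in full.
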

%%%%%%%%%%%%%%%%%%%%%%%%%%%%%%%

We applying Lemma \ref{rec_result} to \eqref{uM_est} and estimate 
\begin{align}\label{uMminus1_est}
 \lnorm{u^{(M)}_{-1}}_{H^{1}(\OM)}^2 \leq (1+C)^{M-1} \left( \lnorm{u^{(M)}_{-M-1}}_{H^{1}(\OM)}^2 +   \lnorm{ u^{(M)}_{-M} }_{H^1(\Omega)}^2 + \sum_{j=1}^{M-1}   \lnorm{g_{-M+j}}_{H^{1/2}(\Gam)}^2      \right),
\end{align} 
and 
\begin{align}\label{uMzero_est}
 \lnorm{u^{(M)}_{0}}_{H^{1}(\OM)}^2 \leq (1+C)^{M} \left( \lnorm{u^{(M)}_{-M-1}}_{H^{1}(\OM)}^2 +   \lnorm{ u^{(M)}_{-M} }_{H^1(\Omega)}^2 + \sum_{j=1}^{M} \lnorm{g_{-M+j}}_{H^{1/2}(\Gam)}^2      \right).
\end{align} 

The source $f^{(M)}$ is computed by 
\begin{align} \label{fsource_Scatpoly}
 f^{(M)}(z) = 2 \re \left(\del u^{(M)}_{-1}(z)\right) + \left( a(z)-k_0(z) \right) u^{(M)}_0(z),
\end{align} and we estimate 
\begin{align}\nonumber
 \lnorm{f^{(M)}}_{L^{2}(\OM)}^2 &\leq 2 \lnorm{ u^{(M)}_{-1} }_{H^{1}(\OM)}^2 + C \lnorm{ u^{(M)}_{0} }_{H^{1}(\OM)}^2\\ \label{fMest}
 &\leq 2(1+C)^{M+1} \left( \lnorm{ u^{(M)}_{-M-1} }_{H^{1}(\OM)}^2 +   \lnorm{ u^{(M)}_{-M}}_{H^1(\Omega)}^2 + \sum_{j=1}^{M} \lnorm{g_{-M+j}}_{H^{1/2}(\Gam)}^2      \right).
\end{align}

This method is implemented in the numerical experiments in Section \ref{Sec:numerics}. Next we analyze the error introduced by truncation.

%%%%%%%%%%%%%%%%%%%%%%%%%%%%%%%%%%%%%%%%%%%%%%%%%%%%%%%%%%%%%%%%%%%%%%%%
\section{Gradient estimates of solutions to nonhomogeneous Bukhgeim-Beltrami equation}\label{Sec:Gradientest_BBeq}

When applying the reconstruction method to the data arising from a general scattering kernel $\ds k(z,\cos \tta)  = \sum_{n=0}^{\INF} k_{-n}(z) \cos (n \tta)$ an error is made due to the truncation in the Fourier series of $k$. This error is controlled by the gradient of the solution to the Cauchy problem for the inhomogeneous Bukhgeim-Beltrami equation
\begin{align}\label{general_beltrami}
\dba\bv +L^2 \del\bv = B\bv + \bbf,
\end{align}for some specific $\bbf$ and operator coefficient $B$. Estimates of the gradient for solutions of \eqref{general_beltrami} may be of separate interest, reason for which we treat them here 
independently of the transport problem.

We start with an energy identity (see \cite{tamasan03} for $\bbf=0$), \`{a} la Mukhometov \cite{mukhometov75} or Pestov \cite{sharafutdinov97}.

%%%%%%%%%%%%%%%%%%%%%%%%%%%%%%%%%%%%%%%%%%%%%%%
\begin{theorem}(Energy identity)\label{newidentity}
%%%%%%%%%%%%%%%%%%%%%%%%%%%%%%%%%%%%%%%%%%%%%%%
Let $\bbf \in l^{2,1}(\BN;L^2(\OM))$ and let $B$ be a bounded operator such that  $B:l^2(\BN;L^2(\OM))\to l^{2,1}(\BN;L^2(\OM))$ and  $B:l^2(\BN;H^1(\OM))\to l^{2,1}(\BN;H^1(\OM))$.\\
If $\bv \in l^{2,\frac{1}{2}}(\BN;H^1(\OM))$ is a solution to \eqref{general_beltrami},
then    
\begin{align}\label{identity_source}
\int_{\OM}  \lnorm{\del \bv}_{l_2}^2dx &=
-2 \int_{\OM} \sum_{j=1}^\infty \re \spr{L^{2j}\del \bv}{L^{2j-2}B \bv} dx
+ \int_{\OM} \sum_{j=0}^\infty \lnorm{L^{2j}B \bv}_{l_2}^2 dx   \\ \nonumber 
&\quad -2 \int_{\OM} \sum_{j=1}^\infty \re \spr{L^{2j}\del \bv}{L^{2j-2}\bbf} dx
+2 \int_{\OM} \sum_{j=0}^\infty \re \spr{L^{2j}B \bv}{L^{2j}\bbf} dx \\ \nonumber
&\quad + \int_{\OM} \sum_{j=0}^\infty \lnorm{L^{2j}\bbf}_{l_2}^2 dx + \frac{i}{2} \int_{\Gam} \sum_{j=0}^\infty \spr{L^{2j}\bv}{\del_s L^{2j}\bv}ds.
\end{align}
\end{theorem}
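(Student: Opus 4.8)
The plan is to reduce the whole identity to a single scalar integration-by-parts formula and then sum that formula against the powers of the left shift $L$.

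\emph{Step 1 (scalar building block).} For a scalar $\phi\in H^1(\OM)$, the commutation $\dba\del=\del\dba$ makes the integrand an exact divergence,
\[ |\dba\phi|^2-|\del\phi|^2=\dba\phi\,\del\ol\phi-\del\phi\,\dba\ol\phi=\dba\!\left(\phi\,\del\ol\phi\right)-\del\!\left(\phi\,\dba\ol\phi\right). \]
Applying the complex form of Green's theorem and using that on the unit circle $\del\ol\phi\,dz+\dba\ol\phi\,d\ol z=\del_s\ol\phi\,ds$, I obtain
\[ \int_\OM\left(|\del\phi|^2-|\dba\phi|^2\right)dx=\frac{i}{2}\int_\Gam\phi\,\del_s\ol\phi\,ds. \]
Applying this componentwise to the entries of $L^{2j}\bv$ and summing over $j\ge0$ gives the sequence-valued identity
\[ \int_\OM\sum_{j=0}^\infty\lnorm{L^{2j}\del\bv}_{l_2}^2\,dx-\int_\OM\sum_{j=0}^\infty\lnorm{L^{2j}\dba\bv}_{l_2}^2\,dx=\frac{i}{2}\int_\Gam\sum_{j=0}^\infty\spr{L^{2j}\bv}{\del_s L^{2j}\bv}\,ds, \]
whose right-hand side is exactly the boundary term in \eqref{identity_source}; call this identity $(\star)$.

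\emph{Step 2 (insert the equation).} Writing $\bw:=B\bv+\bbf$, equation \eqref{general_beltrami} reads $\dba\bv=\bw-L^2\del\bv$. Since $L$ commutes with $\dba$ and $\del$, I expand $\lnorm{L^{2j}\dba\bv}_{l_2}^2=\lnorm{L^{2j}\bw-L^{2j+2}\del\bv}_{l_2}^2$ and sum over $j$. The resulting term $\sum_{j\ge0}\lnorm{L^{2j+2}\del\bv}_{l_2}^2=\sum_{j\ge0}\lnorm{L^{2j}\del\bv}_{l_2}^2-\lnorm{\del\bv}_{l_2}^2$ reproduces, up to the single summand $\lnorm{\del\bv}_{l_2}^2$, the $\del\bv$-series on the left of $(\star)$, which therefore cancels. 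What survives is
\[ \int_\OM\lnorm{\del\bv}_{l_2}^2\,dx=\int_\OM\sum_{j\ge0}\lnorm{L^{2j}\bw}_{l_2}^2\,dx-2\int_\OM\sum_{j\ge0}\re\spr{L^{2j}\bw}{L^{2j+2}\del\bv}\,dx+\frac{i}{2}\int_\Gam\sum_{j\ge0}\spr{L^{2j}\bv}{\del_s L^{2j}\bv}\,ds. \]
In the cross term I shift $j\mapsto j-1$ and use $\re\spr{a}{b}=\re\spr{b}{a}$ to rewrite it as $-2\sum_{j\ge1}\re\spr{L^{2j}\del\bv}{L^{2j-2}\bw}$; substituting $\bw=B\bv+\bbf$ into both $\lnorm{L^{2j}\bw}_{l_2}^2$ and this cross term and expanding splits them into the six groups that are precisely the six terms on the right of \eqref{identity_source}.

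\emph{Main obstacle.} The algebra above is formal; the real work is justifying the interchange of the infinite $j$-sums with the $\OM$-integration and with the componentwise integration by parts, at the regularity $\bv\in l^{2,\frac12}(\BN;H^1(\OM))$. Here the weight $p=\tfrac12$ is exactly what is needed: a direct count gives $\sum_{j\ge0}\lnorm{L^{2j}\bg}_{l_2}^2\sim\lnorm{\bg}_{\frac12,0}^2$, so the series $\sum_j\lnorm{L^{2j}\del\bv}_{l_2}^2$ converges because $\del\bv\in l^{2,\frac12}(\BN;L^2(\OM))$. The hypotheses $\bbf\in l^{2,1}(\BN;L^2(\OM))$ and the one-angular-derivative gain $B:l^2\to l^{2,1}$ guarantee that the $\bw$-series converge, and then all cross terms are controlled by Cauchy--Schwarz. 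Finally, the boundary series and the tangential derivatives $\del_s L^{2j}\bv$ are made sense of through the trace $H^1(\OM)\hookrightarrow H^{1/2}(\Gam)$ together with a density argument reducing the componentwise Green identity to smooth functions.
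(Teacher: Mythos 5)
Your proposal is correct and takes essentially the same route as the paper's proof: the same Green's identity (which the paper invokes directly and you derive from the scalar divergence form), the same substitution of \eqref{general_beltrami} for $\dba\bv$, and the same six-term expansion after writing $B\bv+\bbf$. The only difference is organizational: you sum the componentwise Green identities over the shifts $L^{2j}$ first and then cancel the $\del\bv$-series by an index shift, whereas the paper substitutes the equation first and then telescopes the sum in $n$ using $\lim_{n\to\infty}\int_{\OM}\lnorm{L^{2n}\del\bv}_{l_2}^2\,dx=0$ --- these are the same mechanism, so no further comparison is needed.
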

\begin{proof}
Using the Green's identity
$\ds2\int_{\OM} \lnorm{\del\bv}_{l_2}^2dx=2\int_{\OM}\lnorm{\dba\bv}_{l_2}^2dx + i\int_{\Gam} \spr{\bv}{\del_s\bv}ds,$ where $\del_s$ is the tangential derivative at the boundary, it follows that
\begin{align*}
\int_{\OM}\lnorm{\del\bv}_{l_2}^2dx & = \int_{\OM}\lnorm{L^2 \del\bv - B\bv - \bbf}_{l_2}^2dx +\frac{i}{2} \int_{\Gam} \spr{\bv}{\del_s \bv}ds\\
&= \int_{\OM}\lnorm{L^2 \del\bv}_{l_2}^2dx -2 \int_{\OM} \re \spr{L^{2}\del \bv}{ B \bv} dx
+ \int_{\OM}  \lnorm{B \bv}_{l_2}^2 dx \\
&\quad -2 \int_{\OM} \re \spr{L^{2}\del \bv}{\bbf} dx
+2 \int_{\OM} \re \spr{ B \bv}{\bbf} dx
+ \int_{\OM}  \lnorm{\bbf}_{l_2}^2 dx +\frac{i}{2} \int_{\Gam} \spr{\bv}{\del_s \bv}ds.
\end{align*}
For each $n\in\mathbb{N}$,
\begin{align*}
\int_{\OM}\lnorm{L^{2n}\del\bv}_{l_2}^2dx &= \int_{\OM}\lnorm{L^{2n+2} \del\bv}_{l_2}^2dx -2 \int_{\OM} \re \spr{L^{2n+2}\del \bv}{L^{2n} B \bv} dx
+ \int_{\OM}  \lnorm{L^{2n} B \bv}_{l_2}^2 dx \\
&\quad -2 \int_{\OM} \re \spr{L^{2n+2}\del \bv}{L^{2n} \bbf} dx
+2 \int_{\OM} \re \spr{L^{2n} B \bv}{L^{2n} \bbf} dx \\
&\quad + \int_{\OM}  \lnorm{L^{2n} \bbf}_{l_2}^2 dx + \frac{i}{2} \int_{\Gam} \spr{L^{2n}\bv}{\del_s L^{2n}\bv}ds.
\end{align*}
By summing in $n$, and using $\ds \lim_{n\to\infty}\int_{\OM}\lnorm{L^{2n}\del\bv}_{l_2}^2 dx=0$, we conclude the theorem.
\end{proof}
%%%%%%%%%%%%%%%%%%%%%%%%%%%%%%%%%%%%%%%%%%%%%%%
%%%%%%%%%%%%%%%%%%%%%%%%%%%%%%%%%%%%%%%%%%%%%%%%%%%%%%%%%%%%%%
We note the general identity \cite[Lemma 2.1]{sadiqTamasan01}, for a sequence of nonnegative numbers:%, we have the following identity:
\begin{lemma}\label{seqresult}
Let $\{c_n\}$ be a sequence of nonnegative numbers. Then\begin{align*}%\label{seqresult}
(i)\quad \sum_{m=0}^{\INF} \sum_{n=0}^{\INF} c_{m+n} = \sum_{j=0}^{\INF} (1+j)\; c_{j}, \quad 
(ii)\quad \sum_{m=0}^{\INF} \sum_{n=0}^{\INF} c_{m+2n} = \sum_{j=0}^{\INF} \left(1+\floor*{\frac{j}{2}}\right) \; c_{j},
\end{align*} whenever one of the sides in (i) and (ii) is finite. 
\begin{proof}
(i) By changing the index $j=m+n$, for $m\geq 0$, ($j-n \geq 0,$ and $n \leq j$), we get 
\begin{align*}
  \sum_{m=0}^{\INF} \sum_{n=0}^{\INF}c_{m+n} = \sum_{j=0}^{\INF} \sum_{n=0}^{j}c_{j} =   \sum_{j=0}^{\INF} c_{j} \sum_{n=0}^{j} 1
= \sum_{j=0}^{\INF} (1+j) c_j.
\end{align*}
(ii) Similarly, by changing the index $j=m+2n$, for $m\geq 0$, $ \left( j-2n \geq 0, \; \text{and} \; \ds n \leq \floor*{\frac{j}{2}} \right)$, we get 
\begin{align*}
 \sum_{m=0}^{\INF} \sum_{n=0}^{\INF}c_{m+2n} = \sum_{j=0}^{\INF} \sum_{n=0}^{\floor*{\frac{j}{2}}}c_{j} 
= \sum_{j=0}^{\INF} \left(1+\floor*{\frac{j}{2}}\right) \; c_{j}.
\end{align*}
\end{proof}
\end{lemma}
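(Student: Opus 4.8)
The plan is to regroup each iterated series according to the value of the combined index and then simply count multiplicities. Since all the $c_n$ are nonnegative, the rearrangement theorem for nonnegative series (Tonelli) guarantees that the double sum may be reordered and regrouped arbitrarily without changing its value in $[0,\infty]$. In particular, one side is finite exactly when the other is, which is precisely the hypothesis under which the identity is asserted; no conditional-convergence issues arise.

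For part (i), I would introduce the new index $j = m+n$ and, for each fixed $j\geq 0$, count the pairs $(m,n)$ with $m,n\geq 0$ and $m+n=j$. These are exactly $(0,j),(1,j-1),\dots,(j,0)$, so there are $j+1$ of them, each contributing one copy of $c_j$. Collecting the copies produces the weight $(1+j)$ in front of $c_j$, giving the claimed identity.

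Part (ii) runs along the same lines with $j = m+2n$. For a fixed $j$, the requirement $m = j-2n\geq 0$ restricts the admissible values of $n$ to $\{0,1,\dots,\lfloor j/2\rfloor\}$, so there are exactly $\lfloor j/2\rfloor+1$ lattice points $(m,n)$ on the line $m+2n=j$ in the first quadrant. Summing over $j$ then yields the weight $\bigl(1+\lfloor j/2\rfloor\bigr)$.

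There is essentially no analytic obstacle here: the substantive content is purely combinatorial, namely counting nonnegative-integer solutions on the lines $m+n=j$ and $m+2n=j$. The only point requiring a little care is the floor in part (ii), where one must verify that the last admissible index is $\lfloor j/2\rfloor$ in both the even and odd cases for $j$; this is the single place where the bookkeeping could be mishandled. The justification of the interchange of summation, by contrast, is immediate from nonnegativity.
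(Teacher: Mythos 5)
Your proposal is correct and follows essentially the same route as the paper: reindex by $j=m+n$ (resp.\ $j=m+2n$) and count the lattice points on each line, obtaining the weights $(1+j)$ and $\bigl(1+\floor*{\frac{j}{2}}\bigr)$. Your explicit appeal to Tonelli for nonnegative series merely makes precise the regrouping that the paper's proof performs tacitly, so there is no substantive difference.
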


%%%%%%%%%%%%%%%%%%%%%%%%%%%%%%%%%%%%%%%%%%%%%%%
\begin{theorem}[Gradient estimate]\label{new_estimate}
Let $\bbf \in l^{2,1}(\BN;L^2(\OM))$, and $B$ be a smoothing operator such that $B:l^2(\BN;L^2(\OM))\to l^{2,1}(\BN;L^2(\OM))$ and  $B:l^2(\BN;H^1(\OM))\to l^{2,1}(\BN;H^1(\OM))$ are bounded, and let  $C_B >0$ be such that
\begin{subequations} \label{B_opnorm}
\begin{align} \label{B_opnorm10}
\lnorm{B \bv}_{1,0} &\leq C_B \lnorm{\bv}, \qquad  \forall \bv \in l^2(\BN;L^2(\OM)),   \\  \label{B_opnorm11}
 \lnorm{B \bv}_{1,1} &\leq C_B \lnorm{\bv}_{0,1}, \quad \, \forall \bv \in l^2(\BN;H^1(\OM)).
\end{align} 
\end{subequations}
Assume that $B$ is such that
\begin{align}\label{eps_defn1}
 \epsilon := \sqrt{2\mu}C_B < \sqrt{2}-1, 
\end{align} where $\mu$ is the factor in Poincar\' e inequality \eqref{poincare}.

If $\bv \in l^{2,\frac{1}{2}}(\BN;H^1(\OM))$ is a solution to the inhomogeneous Bukgheim-Beltrami equation \eqref{general_beltrami}, then 
\begin{align}\label{newestimate1}
0\leq  \lnorm{\del \bv} \leq \frac{b+\sqrt{b^2+4ac}}{2a},
\end{align}
where
\begin{align*}
 a = 1- 2\epsilon - \epsilon^2>0, \quad
 b = 2 \epsilon \lnorm{\bv \lvert_{\Gam} }_{0,\frac{1}{2}} + 2 \sqrt{2} \lnorm{\bbf}_{1,0}, \quad 
 c = \epsilon^2 \lnorm{\bv \lvert_{\Gam} }_{0,\frac{1}{2}}^{2} +\pi \lnorm{ \bv \lvert_{\Gam} }^{2}_{\frac{1}{2},\frac{1}{2}} + 2\lnorm{\bbf}_{1,0}^2.
\end{align*} 
\end{theorem}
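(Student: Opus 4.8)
The plan is to feed the energy identity \eqref{identity_source} of Theorem \ref{newidentity} into a scalar quadratic inequality for the single quantity $D:=\lnorm{\del\bv}$, whose left-hand side is exactly $D^2$, and then to solve that quadratic. Everything hinges on converting each left-translated sum $\sum_j L^{2j}(\cdots)$ on the right of \eqref{identity_source} into one of the weighted norms \eqref{spaces}. The arithmetic device is Lemma \ref{seqresult}(ii): pointwise in $z$, $\sum_{j=0}^{\INF}\lnorm{L^{2j}\bw}_{l_2}^2=\sum_{k=0}^{\INF}(1+\floor{k/2})\lvert w_{-k}\rvert^2$, combined with the elementary inequalities $1+\floor{k/2}\le(1+k^2)^{1/2}$ and $(1+\floor{k/2})^2\le 1+k^2$. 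The first shows such a sum is dominated by $\lnorm{\bw}_{\frac12,0}^2\le\lnorm{\bw}_{1,0}^2$, and this is exactly where the half-derivative angular gain enters.

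First I would dispatch the purely quadratic terms: by the identity above, $\int_{\OM}\sum_j\lnorm{L^{2j}B\bv}_{l_2}^2\le\lnorm{B\bv}_{1,0}^2$ and $\int_{\OM}\sum_j\lnorm{L^{2j}\bbf}_{l_2}^2\le\lnorm{\bbf}_{1,0}^2$, while the mixed $B\bv$--$\bbf$ term is handled by Cauchy--Schwarz in $j$ and in $z$, giving $\le 2\lnorm{B\bv}_{1,0}\lnorm{\bbf}_{1,0}$. The delicate terms are the two cross terms pairing $\del\bv$ with $B\bv$ and with $\bbf$. Here the naive Cauchy--Schwarz in $j$ would produce $\sum_{j\ge1}\lnorm{L^{2j}\del\bv}_{l_2}^2=\sum_k\floor{k/2}\lvert\del v_{-k}\rvert^2$, a \emph{weighted} norm of $\del\bv$ not controlled by the left-hand side $D^2=\lnorm{\del\bv}^2$; this is the main obstacle. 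I would resolve it by first reindexing via Lemma \ref{seqresult}, writing $\sum_{j\ge1}\spr{L^{2j}\del\bv}{L^{2j-2}B\bv}=\sum_k\floor{k/2}\,\del v_{-k}\,\ol{(B\bv)_{-(k-2)}}$, and only then applying Cauchy--Schwarz so that the \emph{entire} weight falls on the smoothing factor: with $(1+\floor{l/2})^2\le 1+l^2$ this gives, pointwise, $\lvert\sum_{j\ge1}\spr{L^{2j}\del\bv}{L^{2j-2}B\bv}\rvert\le\lnorm{\del\bv}_{l_2}\bigl(\sum_l(1+l^2)\lvert(B\bv)_{-l}\rvert^2\bigr)^{1/2}$, hence the whole cross term is $\le 2D\,\lnorm{B\bv}_{1,0}$ with $\del\bv$ appearing in its unweighted norm. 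The same manoeuvre bounds the $\bbf$ cross term by $2D\,\lnorm{\bbf}_{1,0}$. It is precisely here that the smoothing of $B$ (its gain into $l^{2,1}$) is indispensable.

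Next I would invoke the operator bound \eqref{B_opnorm10} to replace $\lnorm{B\bv}_{1,0}$ by $C_B\lnorm{\bv}$, and then Poincar\'e \eqref{poincare} (with $\sqrt{s^2+t^2}\le s+t$) to replace $\lnorm{\bv}$ by $\sqrt{\mu}\,(D+\lnorm{\bv\lvert_{\Gam}}_{0,\frac12})$; the packaging $\epsilon=\sqrt{2\mu}\,C_B$ turns every $C_B\lnorm{\bv}$ into a multiple of $\epsilon(D+\lnorm{\bv\lvert_{\Gam}}_{0,\frac12})$. The boundary term is treated on its own: since $\del_s=\del_\beta$ on the unit circle, a single mode with boundary Fourier coefficients $v_{-k,n}$ satisfies $\tfrac{i}{2}\int_{\Gam}v_{-k}\,\ol{\del_s v_{-k}}\,ds=\pi\sum_n n\lvert v_{-k,n}\rvert^2$; summing the $L^{2j}$ contributions and using $1+\floor{k/2}\le(1+k^2)^{1/2}$, $\lvert n\rvert\le(1+n^2)^{1/2}$ together with the Fourier description \eqref{unorm1} of $\lnorm{\cdot}_{\frac12,\frac12}$ bounds this term by $\pi\lnorm{\bv\lvert_{\Gam}}_{\frac12,\frac12}^2$, which is the source of that term in $c$.

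Collecting the six estimates produces an inequality of the form $a\,D^2-b\,D-c\le 0$ with $a,b,c$ as stated, the coefficient $a=1-2\epsilon-\epsilon^2=2-(1+\epsilon)^2$ being positive exactly under the hypothesis \eqref{eps_defn1} that $\epsilon<\sqrt2-1$. Since $D\ge0$, solving this quadratic and retaining the positive root gives $0\le D\le(b+\sqrt{b^2+4ac})/(2a)$, which is \eqref{newestimate1}. The only genuinely non-routine step is the cross-term reorganization above; the rest is bookkeeping of constants through Cauchy--Schwarz, Young, and Poincar\'e, and I expect the precise coefficients in $b$ and $c$ to be those obtained by consistently using $\sqrt{s^2+t^2}\le s+t$ at each split.
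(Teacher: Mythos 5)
Your proposal is correct and follows essentially the same route as the paper's proof: the energy identity of Theorem \ref{newidentity}, term-by-term estimates in which the reindexing of Lemma \ref{seqresult} throws the entire weight onto the smoothing factor ($B\bv$ or $\bbf$) so that $\del \bv$ only ever appears in its unweighted norm, then Poincar\'e \eqref{poincare} to absorb $\lnorm{\bv}$, the Fourier computation on $\Gam$ for the boundary term, and finally the quadratic inequality $a\lnorm{\del\bv}^2-b\lnorm{\del\bv}-c\leq 0$ solved for the positive root. Your elementary bound $(1+\floor{l/2})^2\leq 1+l^2$ is in fact slightly sharper than the paper's $(1+j)^2\leq 2(1+j^2)$ (the source of the $\sqrt{2}$ appearing in $b$), so your constants are marginally better and the stated estimate follows a fortiori.
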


\begin{proof}
We estimate each term on the right hand side of the energy identity \eqref{identity_source}.
For brevity if we denote $B \bv = \langle b_0, b_{-1}, b_{-2}, \cdots \rangle$, then  $ \ds \lnorm{B \bv}_{1,0}^{2}= \sum_{j=0}^\infty  (1+j^2) \int_{\OM}|b_{-j}|^2$. 
\begin{enumerate}[I.]
%%%%%%%%%%%% First term %%%%%%%%%%%%%%%%%%%   
 \item We estimate the first term in \eqref{identity_source}:
    \begin{align*}
	& \left | \int_{\OM} \sum_{j=1}^\infty \spr{L^{2j}\del \bv}{L^{2j-2}B\bv}  dx \right | = \left | \int_{\OM} \sum_{j=1}^\infty \sum_{k=0}^\infty \del v_{-2j-k} b_{-2j+2-k} \right | \\
	&\qquad \leq \int_{\OM} \sum_{j=1}^\infty  |\del v_{-j}| |j b_{-j+1}| 
	\leq \left( \int_{\OM} \sum_{j=1}^\infty  |\del v_{-j}|^2 \right)^{1/2} \left( \int_{\OM} \sum_{j=0}^\infty  (1+j)^2 |b_{-j}|^2 \right)^{1/2} \\
	&\qquad \leq \sqrt{2} \lnorm{\del \bv} \, \left( \int_{\OM} \sum_{j=0}^\infty  (1+j^2) |b_{-j}|^2 \right)^{1/2}
	\leq  \sqrt{2} \lnorm{\del \bv} \lnorm{B \bv}_{1,0}  \\
	&\qquad \leq \sqrt{2} C_{B}  \lnorm{\del \bv}  \, \lnorm{\bv}  \leq \sqrt{2} C_{B} \lnorm{\del \bv} \sqrt{\mu}  \left( \lnorm{\del \bv} + \lnorm{ \bv\lvert_{\Gam} }_{0,\frac{1}{2}} \right) \\
	&\qquad = \sqrt{2 \mu} C_{B}  \lnorm{\del \bv}^2 + \sqrt{2\mu} C_{B}  \, \lnorm{\del \bv} \, \lnorm{\bv\lvert_{\Gam} }_{0,\frac{1}{2}}, 
    \end{align*} where in the first inequality we use Lemma \ref{seqresult} part (i), in the second inequality we use Cauchy-Schwarz inequality, in the third inequality we use $(1+x)^2 \leq 2(1+x^2) $,
    in the fifth inequality we use $\lnorm{B \bv}_{1,0} \leq C_B\lnorm{\bv}$, and in the next to last inequality we use the Poincar\' e inequality \eqref{poincare}.
%%%%%%%%%%%% second term %%%%%%%%%%%%%%%%%%%     
 \item We estimate the second term in \eqref{identity_source}:
    \begin{align*}
	\int_{\OM} \sum_{j=1}^\infty & | \spr{L^{2j}\del \bv}{L^{2j-2} \bbf} | =  \int_{\OM} \sum_{j=1}^\infty \sum_{k=0}^\infty |\del v_{-2j-k} f_{-2j+2-k}| \leq  \int_{\OM} \sum_{j=1}^\infty  |\del v_{-j}| |j f_{-j+1}| \\
	&\leq  \left( \int_{\OM} \sum_{j=1}^\infty  |\del v_{-j}|^2 \right)^{1/2} \left( \int_{\OM} \sum_{j=0}^\infty  (1+j)^2 |f_{-j}|^2 \right)^{1/2} 
	 \leq \sqrt{2} \lnorm{\del \bv} \, \lnorm{\bbf}_{1,0}, %\leq \sqrt{2} \left( \int_{\OM}  ||\del \bv||_{l_2}^2 +  ||\bbf||_{1}^2  \right),
    \end{align*} where in the first inequality we use Lemma \ref{seqresult} part (i), and then Cauchy-Schwarz.
 %%%%%%%%%%%% third term %%%%%%%%%%%%%%%%%%%     
 \item We estimate the third term in \eqref{identity_source}:  
    \begin{align*}
	\int_{\OM} \sum_{j=0}^\infty \lnorm{|L^{2j}B \bv}_{l_2}^2 &=  \int_{\OM} \sum_{j=0}^\infty \sum_{k=0}^\infty | b_{-2j-k}|^2 \leq  \int_{\OM} \sum_{j=0}^\infty (1+j)| b_{-j}|^2 
	\leq \int_{\OM} \sum_{j=0}^\infty  (1+j^2)  |b_{-j}|^2 \\
	&\qquad \leq \lnorm{B \bv}^2_{1,0} \leq C_B^2 \lnorm{\bv}^2 \leq   \mu C_B^2  \lnorm{\del \bv}^2 + \mu C_B^2 \lnorm{\bv \lvert_{\Gam} }_{0,\frac{1}{2}}^2, 
    \end{align*} where in the first inequality we use Lemma \ref{seqresult} part (i), in the next to the last inequality we use $\lnorm{B \bv}_{1,0} \leq C_B \lnorm{\bv}$, while in the last we use the Poincar\' e inequality \eqref{poincare}.
%%%%%%%%%%%% fourth term %%%%%%%%%%%%%%%%%%%     
 \item We estimate the fourth term in \eqref{identity_source}: 
    \begin{align*}
	\int_{\OM} \sum_{j=0}^\infty \lnorm{L^{2j}\bbf}_{l_2}^2  &=  \int_{\OM} \sum_{j=0}^\infty \sum_{k=0}^\infty | f_{-2j-k}|^2 \leq  \int_{\OM} \sum_{j=0}^\infty (1+j)| f_{-j}|^2   
	\leq   \int_{\OM} \sum_{j=0}^\infty  (1+j^2) |f_{-j}|^2=\lnorm{\bbf}^2_{1,0},
    \end{align*}  where in the first inequality we have used Lemma \ref{seqresult} part (i).
 %%%%%%%%%%%% second last term %%%%%%%%%%%%%%%%%%%     
 \item We estimate the next to the last term in \eqref{identity_source}: 
    \begin{align*}
	2 \int_{\OM} \sum_{j=0}^\infty  | \spr{L^{2j}B \bv}{L^{2j}\bbf} | &\leq 2\int_{\OM} \sum_{j=0}^\infty  \lnorm{L^{2j}B\bv}_{l_2} \, \lnorm{L^{2j}\bbf}_{l_2} 
	\leq  \int_{\OM} \sum_{j=0}^\infty \lnorm{L^{2j}B\bv}_{l_2}^2 +  \int_{\OM} \sum_{j=0}^\infty \lnorm{L^{2j}\bbf}_{l_2}^2 \\
	&\quad \leq  \mu C_B^2  \lnorm{\del \bv}^2 + \mu C_B^2 \lnorm{ \bv \lvert_{\Gam} }_{0,\frac{1}{2}}^2 + \lnorm{\bbf}^2_{1},
    \end{align*}  where in the first inequality we have used Cauchy-Schwarz, in the second inequality we have used the fact $xy \leq \frac{1}{2} (x^2+y^2)$, and in the last inequality we have used estimates from (III) and (IV).
%%%%%%%%%%%% Boundary term %%%%%%%%%%%%%%%%%%%     
  \item  To estimate the last term in \eqref{identity_source}, we use the fact that $\Gam$ is the unit circle and consider the Fourier expansion of the traces of the modes $v_{-j} \lvert_{\Gam}$. 
  For $e^{i \beta} \in \Gam$, 
  \begin{align*}
      v_{-j} ( e^{i\beta}) = \sum_{k= -\infty}^\infty v_{-j,k}e^{ik\beta}, \qquad \del_{\beta}v_{-j} ( e^{i\beta}) = \sum_{m= -\infty}^\infty (im) v_{-j,m}e^{im\beta}.
  \end{align*} 
  Using the parametrization, the last term in \eqref{identity_source} becomes 
  \begin{align}\nonumber
  \left| \int_{\Gam} \sum_{j=0}^\infty \spr{L^{2j}\bv}{\del_s L^{2j}\bv}ds \right| &=
	\left| \int_{\Gam} \sum_{j=0}^\infty \sum_{k=0}^\infty v_{-2j-k} \ol{\del_s v_{-2j-k}} ds \right| 
	= \left| \int_{\Gam} \sum_{j=0}^\infty  \left(1+\floor*{\frac{j}{2}}\right)   v_{-j} \ol{\del_s v_{-j}} ds \right| \\ \nonumber
 & =  \left| \int_{0}^{2\pi} \sum_{j=0}^\infty  \left(1+\floor*{\frac{j}{2}}\right) \sum_{k= -\infty}^\infty v_{-j,k}e^{ik\beta} \sum_{m= -\infty}^\infty (-im) \ol{v_{-j,m}}e^{-im\beta} d\beta \right| \\ \nonumber
 & =  \left| \sum_{j=0}^\infty  \left(1+\floor*{\frac{j}{2}}\right) \sum_{k= -\infty}^\infty v_{-j,k} \sum_{m= -\infty}^\infty (-im) \;  \ol{v_{-j,m}}  \int_{0}^{2\pi}  e^{i(k-m)\beta}d\beta \right| \\ \nonumber
 &\quad \leq  2 \pi  \sum_{j=0}^\infty   (1+j^2)^{\frac{1}{2}} \sum_{k= -\infty}^\infty |k|\,\left|  v_{-j,k} \ol{v_{-j,k}} \right| \\ \label{boundary_lastterm}
 & \quad \leq 2 \pi \sum_{j=0}^\infty  \sum_{k= -\infty}^\infty (1+j^2)^{\frac{1}{2}} (1+k^2)^{\frac{1}{2}} \left| v_{-j,k}\right|^2 = 2 \pi \lnorm{\bv \lvert_{\Gam}}_{\frac{1}{2},\frac{1}{2}}^2,
\end{align} where in the second equality we have used Lemma \ref{seqresult} part (ii), in the first inequality we have used the fact $\left(1+\floor*{\frac{j}{2}}\right) \leq  (1+j^2)^{1/2} $,  and in the last equality we have used the definition of the norm \eqref{unorm1}.    
\end{enumerate}
%%%%%%%%%%%%%%%%%%%%%%%%%%%%%%%%%
Using the above estimates (I)-(VI) for the expressions in \eqref{identity_source}, we have proved for 
$\ds \tau =   \left( \int_{\OM}  \lnorm{\del \bv}_{l_2}^2 \right)^{1/2} $, that $a \tau^2 - b\tau - c \leq 0$.
Assumption on $\epsilon$ as in \eqref{eps_defn1} yield $a>0$ and we have the estimate \eqref{newestimate1}.
\end{proof}
%%%%%%%%%%%%%%%%%%%%%%%%%%%%%%%%%
For the case when $B=0$ we obtain the immediate corollary.
\begin{cor}\label{gradEstCor}
Let $\bbf \in l^{2,1}(\BN;L^2(\OM))$. If $\bv \in l^{2,\frac{1}{2}}(\BN;H^1(\OM))$ solves 
\begin{align}\label{BukhBeltsimple}
\dba\bv +L^2 \del\bv = \bbf,
\end{align}
then 
\begin{align}\label{estimateSimple}
 \lnorm{\del \bv}^2 \leq 12 \lnorm{\bbf}_{1,0}^2+2\pi \lnorm{\bv \lvert_{\Gam}}^2_{\frac{1}{2},\frac{1}{2}}.
\end{align}
\end{cor}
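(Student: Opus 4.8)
The plan is to obtain \eqref{estimateSimple} as the degenerate case $B=0$ of the gradient estimate already proved in Theorem \ref{new_estimate}. When $B=0$, its smoothing bounds \eqref{B_opnorm10}--\eqref{B_opnorm11} hold with the constant $C_B=0$, so that
\[
\epsilon = \sqrt{2\mu}\,C_B = 0 ,
\]
and the subcriticality requirement $\epsilon < \sqrt{2}-1$ of \eqref{eps_defn1} is satisfied trivially. Hence Theorem \ref{new_estimate} applies verbatim to every solution $\bv \in l^{2,\frac{1}{2}}(\BN;H^1(\OM))$ of \eqref{BukhBeltsimple}, and it remains only to track what the three coefficients $a,b,c$ become at $\epsilon=0$.

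Setting $\epsilon=0$ in the definitions of $a$, $b$, $c$ in Theorem \ref{new_estimate} collapses them to
\[
a = 1, \qquad b = 2\sqrt{2}\,\lnorm{\bbf}_{1,0}, \qquad c = \pi\,\lnorm{\bv\lvert_{\Gam}}^2_{\frac{1}{2},\frac{1}{2}} + 2\,\lnorm{\bbf}_{1,0}^2 .
\]
The discriminant in \eqref{newestimate1} is then $b^2 + 4ac = 16\,\lnorm{\bbf}_{1,0}^2 + 4\pi\,\lnorm{\bv\lvert_{\Gam}}^2_{\frac{1}{2},\frac{1}{2}}$, so \eqref{newestimate1} reads
\[
\lnorm{\del \bv} \;\leq\; \sqrt{2}\,\lnorm{\bbf}_{1,0} + \sqrt{\,4\,\lnorm{\bbf}_{1,0}^2 + \pi\,\lnorm{\bv\lvert_{\Gam}}^2_{\frac{1}{2},\frac{1}{2}}\,} .
\]
Squaring this and applying the elementary inequality $(x+y)^2 \le 2(x^2+y^2)$ yields
\[
\lnorm{\del \bv}^2 \;\leq\; 2\Big(2\,\lnorm{\bbf}_{1,0}^2 + 4\,\lnorm{\bbf}_{1,0}^2 + \pi\,\lnorm{\bv\lvert_{\Gam}}^2_{\frac{1}{2},\frac{1}{2}}\Big) = 12\,\lnorm{\bbf}_{1,0}^2 + 2\pi\,\lnorm{\bv\lvert_{\Gam}}^2_{\frac{1}{2},\frac{1}{2}} ,
\]
which is precisely \eqref{estimateSimple}.

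I do not anticipate any genuine obstacle: once Theorem \ref{new_estimate} is in hand the corollary is a one-line algebraic specialization, and the only point worth flagging is the legitimacy of taking $C_B=0$ when $B\equiv 0$. For a self-contained derivation one may instead return to the energy identity \eqref{identity_source} with $B=0$, in which only the forcing and boundary contributions survive; bounding them by the estimates (II), (IV) and (VI) from the proof of Theorem \ref{new_estimate} and absorbing the single cross term $2\int_{\OM}\sum_{j\ge 1}\re\spr{L^{2j}\del\bv}{L^{2j-2}\bbf}$ via Young's inequality reproduces an estimate of the same form (indeed with a marginally sharper constant, since the $B\bv$--$\bbf$ interaction term (V) now vanishes entirely rather than merely being small).
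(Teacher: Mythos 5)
Your proposal is correct and is essentially the paper's own proof: the paper likewise specializes Theorem \ref{new_estimate} to the case $\epsilon=0$, $a=1$ and then applies the elementary bound $\left(\frac{b+\sqrt{b^2+4c}}{2}\right)^2\leq b^2+2c$, which is precisely your $(x+y)^2\le 2(x^2+y^2)$ step in disguise. Your flagged concern about taking $C_B=0$ is harmless (the paper does the same implicitly, and one can also let $\epsilon\to 0^+$ by continuity of the bound), so no further work is needed.
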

\begin{proof}
This is the case $\epsilon=0$ and $a=1$ in \eqref{newestimate1}. We also use
$\ds\left(\frac{b+\sqrt{b^2+4c}}{2}\right)^2\leq b^2+2c.$
\end{proof}
%%%%%%%%%%%%%%%%%%%%%%%%%%%%%%%%%%%%%%%%%%%%%%%%%%%%%%%%%%%%%%%%%%%%%%%%%%%%%%%%
\section{Smoothing due to scattering}\label{Sec:SmoothingK}

In this section we explicit the smoothing properties of the Fourier multiplier operator $K$ in \eqref{multiplier} as determined by the appropriate decay of the Fourier coefficients of the scattering kernel $k_n(z)=\frac{1}{2\pi} \int_{-\pi}^{\pi} k(z,\cos \theta ) e^{-i n\theta}d\theta$, $n\in\mathbb{Z}$.
The gain of $1/2$  smoothness in the angular variable (see \eqref{LNKvphalf} below) has been shown before by different methods in \cite{mokhtar}, and in a more general case than considered here in \cite{stefanovUhlmann08}.
%%%%%%%%%%%%%%%%%%%%%%%%%%%%%%%%%%%%%%%%
\begin{lemma}(Smoothing due to scattering)\label{key} 
Let $M \geq1$ be a positive integer and $K$ be the Fourier multiplier in \eqref{multiplier}. 
Assume that $k$ is such that its Fourier coefficients starting from index $M$ onward satisfy 
\begin{align}\label{gamma_kdecay}
\gamma := \sup_{j \geq M} (1+j)^{p} \max \left\{ \lnorm{k_{-j}}_{\INF}, \lnorm{\nabla_x k_{-j}}_{\INF} \right\} < \INF, \qquad \text{for} \; p > 1/2.  
\end{align}
(i) If $\bv \in l^2(\BN;L^2(\OM))$, then   
\begin{align}\label{LnEsqr} 
\sum_{n=0}^\infty \lnorm{L^{n+M} K \bv}^2  \leq \frac{\gamma^2}{(M+1)^{2p-1}}  \lnorm{L^M\bv}^2.
\end{align}
(ii) $K:l^2(\BN;H^1(\OM))\to l^{2,\frac{1}{2}}(\BN;H^1(\OM))$ is bounded. More precisely,
\begin{align}\label{LNKvphalf} 
\lnorm{L^{M} K  \bv }_{\frac{1}{2},1 } \leq \frac{\sqrt{2}\gamma}{(M+1)^{p-1/2}} \lnorm{ L^M \bv}_{0,1}, \qquad  \forall \bv \in l^2(\BN;H^1(\OM)).
\end{align}
(iii) Moreover, if \eqref{gamma_kdecay} holds for $p \geq 1$, then
$\ds K:l^2(\BN;L^2(\OM))\to l^{2,1}(\BN;L^2(\OM))$ and\\ $\ds K:l^{2}(\BN;H^1(\OM)) \to l^{2,1}(\BN;H^1(\OM))
$ are bounded, and  
\begin{align}\label{LNp1} 
\lnorm{L^{M} K \bv}_{1,0}  &\leq \frac{\gamma}{(M+1)^{p -1}}  \lnorm{L^M\bv}, \qquad  \forall \bv \in l^2(\BN;L^2(\OM)), \\ \label{LNKvp1} 
\lnorm{L^{M} K  \bv}_{1,1} &\leq \frac{\sqrt{2}\gamma}{(M+1)^{p-1}} \lnorm{L^M \bv}_{0,1}, \qquad  \forall \bv \in l^2(\BN;H^1(\OM)).
\end{align} 
\end{lemma}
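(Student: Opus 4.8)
The plan is to reduce all three parts to scalar estimates on the individual Fourier modes, exploiting that both $K$ and the left translation $L$ act diagonally on the sequence: writing $\bv=\langle v_0,v_{-1},v_{-2},\dots\rangle$, one has $(L^{m}K\bv)_{-j}=k_{-(j+m)}\,v_{-(j+m)}$ for every $j\ge 0$. Thus each of \eqref{LnEsqr}, \eqref{LNKvphalf}, \eqref{LNp1}, \eqref{LNKvp1} amounts to controlling a weighted sum of the products $k_{-(j+M)}v_{-(j+M)}$, where the gain in $M$ is extracted from the decay hypothesis \eqref{gamma_kdecay} together with elementary inequalities comparing the weight $(1+j^2)^{s}$ against the coefficient decay $(1+j+M)^{-2p}$. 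Throughout I abbreviate $\gamma_j:=\gamma\,(1+j+M)^{-p}$, so that \eqref{gamma_kdecay} gives $\max\{\lnorm{k_{-(j+M)}}_{\INF},\lnorm{\nabla_x k_{-(j+M)}}_{\INF}\}\le\gamma_j$ for all $j\ge 0$.

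For part (i) I would first expand the norm componentwise and interchange the order of summation. Since $\lnorm{L^{n+M}K\bv}^2=\sum_{j\ge0}\lnorm{k_{-(j+n+M)}v_{-(j+n+M)}}_{L^2(\OM)}^2$, summing over $n$ produces a double sum $\sum_{n\ge0}\sum_{j\ge0}c_{j+n}$ with $c_\ell=\lnorm{k_{-(\ell+M)}v_{-(\ell+M)}}_{L^2(\OM)}^2$. Lemma \ref{seqresult}(i) collapses this to $\sum_{\ell\ge0}(1+\ell)\,c_\ell$. Bounding $c_\ell\le\gamma_\ell^2\lnorm{v_{-(\ell+M)}}_{L^2(\OM)}^2$ and using the elementary inequality
\[
\frac{1+\ell}{(1+\ell+M)^{2p}}\le\frac{1+\ell+M}{(1+\ell+M)^{2p}}=\frac{1}{(1+\ell+M)^{2p-1}}\le\frac{1}{(M+1)^{2p-1}},
\]
valid because $2p-1>0$, yields \eqref{LnEsqr} after recognizing $\sum_\ell\lnorm{v_{-(\ell+M)}}_{L^2(\OM)}^2=\lnorm{L^M\bv}^2$.

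Parts (ii) and (iii) are more direct, since $L^{M}K\bv$ is a single shifted sequence and no double sum appears. Here the weighted norm is
\[
\lnorm{L^MK\bv}_{s,q}^2=\sum_{j\ge0}(1+j^2)^{s}\lnorm{k_{-(j+M)}v_{-(j+M)}}_{H^q(\OM)}^2,
\]
with $(s,q)=(\tfrac12,1)$ for \eqref{LNKvphalf}, $(s,q)=(1,0)$ for \eqref{LNp1}, and $(s,q)=(1,1)$ for \eqref{LNKvp1}. The scalar ingredient is a product estimate: for $q=0$ one simply has $\lnorm{k_{-(j+M)}v_{-(j+M)}}_{L^2(\OM)}\le\gamma_j\lnorm{v_{-(j+M)}}_{L^2(\OM)}$, while for $q=1$ the Leibniz rule $\nabla(k v)=v\,\nabla k+k\,\nabla v$ together with the $L^\infty$ bounds on both $k_{-(j+M)}$ and $\nabla_x k_{-(j+M)}$ gives $\lnorm{k_{-(j+M)}v_{-(j+M)}}_{H^1(\OM)}\le\sqrt2\,\gamma_j\lnorm{v_{-(j+M)}}_{H^1(\OM)}$. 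This is exactly the step that forces the hypothesis \eqref{gamma_kdecay} to control $\nabla_x k_{-j}$ as well. Inserting these bounds and using $(1+j^2)^{1/2}\le 1+j+M$ for the half-index weight and $1+j^2\le(1+j+M)^2$ for the full-index weights, I extract the surplus factor $(1+j+M)^{-(2p-1)}\le(M+1)^{-(2p-1)}$ in \eqref{LNKvphalf}, and $(1+j+M)^{-(2p-2)}\le(M+1)^{-(2p-2)}$ in \eqref{LNp1}--\eqref{LNKvp1}; the latter requires $2p-2\ge0$, i.e.\ $p\ge1$, which is precisely the extra hypothesis imposed in (iii). Summing the residual $\sum_j\lnorm{v_{-(j+M)}}_{H^q(\OM)}^2=\lnorm{L^M\bv}_{0,q}^2$ completes each estimate.

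The only genuinely substantive step is the $H^1$ product estimate and, more importantly, the matching of the polynomial weight in the target norm against the coefficient decay: one must be left with enough surplus powers of $(1+j+M)$ after absorbing the weight so that they can be bounded below by $(M+1)$ and pulled out of the sum uniformly in $j$. This is what dictates the thresholds $p>1/2$ (so that $2p-1>0$ in (i) and (ii)) and $p\ge1$ (so that $2p-2\ge0$ in (iii)), and it is the reason the gain in $M$ appears as a negative power. Everything else is bookkeeping with the shift $L$ and a single application of Lemma \ref{seqresult}.
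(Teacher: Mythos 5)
Your proof is correct and follows essentially the same route as the paper's: part (i) via the double-sum collapse of Lemma \ref{seqresult}(i) followed by the comparison $\frac{1+j}{(1+j+M)^{2p}}\le\frac{1}{(M+1)^{2p-1}}$, and parts (ii)--(iii) via the componentwise Leibniz bound together with the weight comparisons $(1+j^2)^{1/2}\le 1+j+M$ and $1+j^2\le(1+j+M)^2$, which produce the thresholds $p>1/2$ and $p\ge 1$ exactly as you describe. The only caveat, which the paper's own proof shares, is that the Leibniz step carried out honestly (keeping the cross term, or applying Cauchy--Schwarz to it) yields the constant $\sqrt{3}$ rather than $\sqrt{2}$ in \eqref{LNKvphalf} and \eqref{LNKvp1}; this is immaterial to the structure of the argument and to how the lemma is used later.
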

%%%%%%%%%%%%%%%%%%%%%%%%%%%%%%%%%%%%%%%%
\begin{proof}
(i) Let $\bv \in l^{2}(\BN;L^2(\OM))$. Then 
\begin{align*} 
\sum_{n=0}^\infty \lnorm{L^{n+M} K \bv}^2 =& \sum_{n=0}^\infty  \sum_{m=0}^\infty  \int_{\OM}  | k_{-n-m-M}v_{-n-m-M}|^2 = \sum_{j=0}^\infty (1+j) \int_{\OM} | k_{-j-M}v_{-j-M}|^2   \\
&\leq \gamma^2 \sum_{j=0}^\infty \frac{1+j}{(1+j+M)^{2p}} \int_{\OM} | v_{-j-M}|^2
 \leq \gamma^2 \sum_{j=0}^\infty \frac{1}{(1+j+M)^{2p-1}} \int_{\OM} | v_{-j-M}|^2 \\
&\leq  \frac{\gamma^2}{(M+1)^{2p-1}} \sum_{j=0}^\infty \int_{\OM} | v_{-j-M}|^2 
\leq \frac{\gamma^2}{(M+1)^{2p-1}} \lnorm{L^M\bv}^2,
\end{align*}where in the second equality we have used Lemma \ref{seqresult} part (i), and in the first inequality we have used \eqref{gamma_kdecay}. 

To prove (ii), let $\bv \in l^2(\BN;H^1(\OM))$.  Then 
\begin{align} \nonumber
 &\lnorm{L^{M} K \bv}^2_{\frac{1}{2},1 } = \sum_{j=0}^\infty (1+j^2)^{\frac{1}{2}} \left( \int_{\OM} |k_{-j-M}v_{-j-M} |^2 +  \int_{\OM} |\del(k_{-j-M}v_{-j-M}) |^2 \right )\\ \label{Kfirst(i)}
 &\quad \leq \sum_{j=0}^\infty (1+j^2)^{\frac{1}{2}} \left( \int_{\OM} |k_{-j-M}v_{-j-M} |^2 + \int_{\OM} |k_{-j-M}|^2|\del(v_{-j-M}) |^2 + \int_{\OM} |v_{-j-M}|^2|\del k_{-j-M} |^2 \right).
\end{align}
We estimate the first term in \eqref{Kfirst(i)}, 
\begin{align} \nonumber
\sum_{j=0}^\infty &(1+j^2)^{\frac{1}{2}}  \int_{\OM} |k_{-j-M}v_{-j-M} |^2 \leq \gamma^2 \sum_{j=0}^\infty \frac{(1+j^2)^{\frac{1}{2}}}{(1+j+M)^{2p}} \int_{\OM} | v_{-j-M}|^2 \\ \nonumber
&\qquad \leq \gamma^2 \sum_{j=0}^\infty \frac{1}{(1+j+M)^{2p-1}} \int_{\OM}  | v_{-j-M}|^2 \leq  \frac{\gamma^2}{(M+1)^{2p-1}} \sum_{j=0}^\infty  \int_{\OM} | v_{-j-M}|^2 \\ \label{first_term}
&\qquad \leq \frac{\gamma^2}{(M+1)^{2p-1}} \lnorm{ L^M\bv}^2,
\end{align} where in the first inequality we have used decay of $k_j$'s. \\
Similarly, following the same proof as of the first term \eqref{first_term}, the term 
\begin{align*}
 \sum_{j=0}^\infty (1+j^2)^{\frac{1}{2}}  \int_{\OM} |k_{-j-M}|^2|\del(v_{-j-M}) |^2 \leq \frac{\gamma^2}{(M+1)^{2p-1}} \lnorm{ L^M \del \bv}^2, 
\end{align*} and the last term 
\begin{align*} \sum_{j=0}^\infty &(1+j^2)^{\frac{1}{2}}  \int_{\OM} |v_{-j-M}|^2|\del(k_{-j-M}) |^2 \leq \frac{\gamma^2}{(M+1)^{2p-1}} \lnorm{ L^M\bv}^2.
\end{align*}
Thus the expression in \eqref{Kfirst(i)} becomes
\begin{align*} 
 \lnorm{L^{M} K \bv}^2_{\frac{1}{2},1 } &\leq \frac{2\gamma^2}{(M+1)^{2p-1}} \lnorm{ L^M\bv}^2 + \frac{\gamma^2}{(M+1)^{2p-1}} \lnorm{ L^M \del \bv }^2 \\
 &\leq \frac{2\gamma^2}{(M+1)^{2p-1}} \left( \lnorm{ L^M\bv}^2 + \lnorm{ L^M \del \bv}^2   \right) = \frac{2\gamma^2}{(M+1)^{2p-1}} \lnorm{ L^M \bv}^2_{0,1}.
\end{align*}

To prove (iii), assume that $k$ is such that \eqref{gamma_kdecay}  holds for $p \geq 1$.
Let $\bv \in l^2(\BN;L^2(\OM))$, then
\begin{align*} 
\lnorm{L^{M} K \bv}^2_{1,0} &= \sum_{j=0}^\infty (1+j^2) \int_{\OM} |k_{-j-M}v_{-j-M} |^2 \\
&  \leq \gamma^2 \sum_{j=0}^\infty \frac{(1+j^2)}{(1+j+M)^{2p}} \int_{\OM} | v_{-j-M}|^2 \leq \gamma^2 \sum_{j=0}^\infty \frac{1}{(1+j+M)^{2p-2}} \int_{\OM}  | v_{-j-M}|^2 \\
& \leq  \frac{\gamma^2}{(M+1)^{2p-2}} \sum_{j=0}^\infty  \int_{\OM} | v_{-j-M}|^2  \leq \frac{\gamma^2}{(M+1)^{2p-2}} \lnorm{ L^M\bv}^2,
\end{align*} where in the first inequality we have used decay of $k_j$'s. 

To prove the last part, let $\bv \in l^{2}(\BN;H^1(\OM))$.  Then 
\begin{align*}
 &\lnorm{L^{M} K  \bv}_{1,1} = \sum_{j=0}^\infty (1+j^2) \left( \int_{\OM} |k_{-j-M}v_{-j-M} |^2 +  \int_{\OM} |\del(k_{-j-M}v_{-j-M}) |^2 \right ).
\end{align*}
Following the similar proof of (ii), $\left(  \text{with} \; (1+j^2)\; \text{instead of} \; (1+j^2)^{1/2} \right)$, we have 
\begin{align*} 
 &\lnorm{L^{M} K  \bv}^2_{1,1} \leq \frac{2\gamma^2}{(M+1)^{2p-2}} \left( \lnorm{ L^M\bv}^2 + \lnorm{ L^M \del \bv}^2   \right) = \frac{2\gamma^2}{(M+1)^{2p-2}} \lnorm{L^M \bv}^2_{0,1}.
\end{align*}
\end{proof}
%%%%%%%%%%%%%%%%%%%%%%%%%%%%%%%%%%%%%%%%%%%%%%%%%%%%%%%%%%%
%%%%%%%%%%%%%%%%%%%%%%%%%%%%%%%%%%%%%%%%%%%%%%%%%%%%%%%%%%%%%%%%%
\section{Error estimates }\label{Sec:GenScat_Map}

Recall the sequence valued maps 
$\ds\bu = \langle u_{0}, u_{-1},u_{-2},... \rangle$ and $\ds \bu^{(M)} = \langle u^{(M)}_{0}, u^{(M)}_{-1},u^{(M)}_{-2},... \rangle$ solutions of $\ds \dba\bu +L^2 \del\bu+ aL\bu = L K\bu$, respectively of $\dba\bu^{(M)} +L^2 \del\bu^{(M)}+ aL\bu^{(M)} = L K^{(M)}\bu^{(M)}$, where $K$ is the multiplier operator in  \eqref{multiplier}, and $K^{(M)}$ is its truncated version in  \eqref{KMmultiplier} corresponding to the $M$-th order polynomial approximation of $k$.

The error we make in the source reconstruction is controlled by the sequence valued map 
\begin{align}\label{q_defn}
\bq^{(M)}:=e^{-G}(\bu-\bu^{(M)}),
\end{align} 
which solves
\begin{align}\label{bqM_Beltrami}
\dba\bq^{(M)} +L^2 \del\bq^{(M)} =  e^{-G}LK\bu - e^{-G}LK^{(M)}\bu^{(M)}.
\end{align}

Recall that the integrating operators $e^{\pm G}$ commute with the left translation, $ [e^{\pm G}, L]=0$. From the equation \eqref{KMmultiplier} is easy to see that the translated sequence
\begin{align}\label{LMq_defn}
L^M\bq^{(M)}=e^{-G}L^M(\bu-\bu^{(M)})
\end{align}solves
\begin{equation}\label{LMbqM_Beltrami}
\dba L^M\bq^{(M)} +L^2 \del L^M\bq^{(M)} =  e^{-G}L^{M+1}K\bu.
\end{equation}

For $a\in C^{2,s}(\ol \OM)$, $s>1/2$, let $\balpha, \bbeta$ be as in Proposition \ref{eGprop}. For brevity, let us define
\begin{align}\label{sigma_defn}
 \sigma = \max\{ \lnorm{\balpha}_{l^{1,1}_{\INF}(\ol \OM)} , \lnorm{\del \balpha}_{l^{1,1}_{\INF}(\ol \OM)} , \lnorm{\bbeta}_{l^{1,1}_{\INF}(\ol \OM)}, \lnorm{\del \bbeta}_{l^{1,1}_{\INF}(\ol \OM)} \}.
\end{align}

Under sufficient regularity on $k$, its Fourier coefficients satisfy \eqref{gamma_kdecay}  with $p=1$.
\begin{lemma}\label{lem_propk}
If $k\in C^{1,s}_{per}([-1,1]; Lip(\Omega) )$, $s > 1/2$, then  
\begin{align}\label{p=1gamma}
\gamma_M=\sup_{j \geq M+1} (1+j) \max \left\{ \lnorm{ k_{-j}}_{\INF},\lnorm{ \nabla_x k_{-j}}_{\INF} \right\} < \INF, \qquad \text{for any}\; M\geq 1.
\end{align}
\end{lemma}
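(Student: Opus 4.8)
The plan is to read off the required decay directly from the definition, recognizing $k_{-j}(z)=\frac{1}{2\pi}\int_{-\pi}^\pi k(z,\cos\theta)e^{ij\theta}\,d\theta$ as the $j$-th Fourier coefficient of the periodic angular function $g_z(\theta):=k(z,\cos\theta)$, and to extract the factor $(1+j)^{-1}$ from a single integration by parts in $\theta$. Since the hypothesis $k\in C^{1,s}_{per}([-1,1];Lip(\Omega))$ makes $t\mapsto k(z,t)$ of class $C^1$ with $\partial_t k$ bounded uniformly in $z$, the composition $g_z$ is $C^1$ and $2\pi$-periodic with $g_z'(\theta)=-\sin\theta\,\partial_t k(z,\cos\theta)$; the factor $\sin\theta$ suppresses the would-be bad behavior at the critical points $\theta=0,\pi$ of $\cos$, so $g_z'$ is continuous and bounded uniformly in $(z,\theta)$. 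Integrating by parts once, with the periodic boundary terms vanishing, gives $|k_{-j}(z)|\le |j|^{-1}\sup_{z,\theta}|\partial_t k(z,\cos\theta)|$, hence $(1+j)\|k_{-j}\|_\infty$ is bounded for all $j\ge 1$.

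For the spatial gradient I would repeat the argument after commuting $\nabla_x$ with the angular integral, $\nabla_x k_{-j}(z)=\frac{1}{2\pi}\int_{-\pi}^\pi \nabla_x k(z,\cos\theta)e^{ij\theta}\,d\theta$, which is legitimate because the $Lip(\Omega)$-valued hypothesis gives $\nabla_x k(\cdot,t)\in L^\infty(\Omega)$ uniformly in $t$. The key point is that $\theta\mapsto \nabla_x k(z,\cos\theta)$ is again $C^1$ with derivative $-\sin\theta\,\nabla_x\partial_t k(z,\cos\theta)$, and this is bounded precisely because $\partial_t k(\cdot,t)\in Lip(\Omega)$ uniformly in $t$. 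A single integration by parts then yields $|\nabla_x k_{-j}(z)|\le |j|^{-1}\sup_{z,\theta}|\nabla_x\partial_t k(z,\cos\theta)|$, so $(1+j)\|\nabla_x k_{-j}\|_\infty$ is likewise bounded.

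Combining the two estimates, $(1+j)\max\{\|k_{-j}\|_\infty,\|\nabla_x k_{-j}\|_\infty\}$ is bounded by a constant independent of $j$, whence its supremum over $j\ge M+1$ is finite for every $M\ge 1$ (and in fact nonincreasing in $M$), which is exactly the assertion \eqref{p=1gamma}.

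I expect the only genuine subtlety to lie in the mixed-regularity step of the second paragraph: justifying simultaneously the interchange of $\nabla_x$ with $\int\cdots\,d\theta$ and the identity $\partial_\theta\nabla_x k=-\sin\theta\,\nabla_x\partial_t k$ together with a uniform bound. This is where the precise meaning of $C^{1,s}_{per}([-1,1];Lip(\Omega))$ is used: the $Lip(\Omega)$-valued first $t$-derivative is what supplies a bounded spatial gradient of $\partial_t k$, uniformly in the angular variable. I would also remark that the Hölder exponent $s>1/2$ plays no role here — plain $C^1$ dependence in the angular variable already produces the $O(1/j)$ decay that bounds $\gamma_M$; the additional smoothness is what is needed elsewhere for the $\ell_1$-type summability rather than for the mere boundedness claimed in this lemma.
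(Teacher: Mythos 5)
Your argument is correct, but it takes a genuinely different route from the paper's. The paper proves this lemma in one line by citing Bernstein's lemma \cite[Chap.~I, Theorem~6.3]{katznelson}: since $\theta\mapsto k(z,\cos\theta)$ is $C^{1,s}$ in $\theta$ with $s>1/2$, uniformly in $z$ (and likewise for its spatial gradient, by the $Lip(\Omega)$-valued hypothesis), Bernstein applied to the $\theta$-derivative yields the absolute summability $\sup_z\sum_j j\,|k_{-j}(z)|<\infty$, of which the uniform bound \eqref{p=1gamma} is an immediate consequence. Your single integration by parts is more elementary and gives the explicit bound $(1+j)\,\|k_{-j}\|_\infty\le 2\sup_{z,t}|\partial_t k(z,t)|$ for $j\ge1$; as you correctly observe, it makes no use of the H\"older exponent $s>1/2$, so it establishes the statement under weaker hypotheses (plain $C^1$ angular regularity with $Lip(\Omega)$-valued derivative). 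What the paper's route buys in exchange is the stronger $\ell^1$-type decay $\sum_j j\,\|k_{-j}\|_\infty<\infty$, which this lemma does not assert and which the subsequent estimates do not require: Lemma~\ref{key} and Theorem~\ref{error_estimate} consume only the sup quantities $\gamma$ and $\gamma_M$. Concerning the one subtlety you flag, you can sidestep the interchange of $\nabla_x$ with $\int\!d\theta$ and with $\partial_\theta$ altogether: integrate by parts first at fixed $z$ to get
\begin{equation*}
k_{-j}(z)=\frac{1}{2\pi i j}\int_{-\pi}^{\pi}\sin\theta\,\partial_t k(z,\cos\theta)\,e^{ij\theta}\,d\theta,
\end{equation*}
and then observe that the right-hand side is a Lipschitz function of $z$ with constant at most $j^{-1}\sup_{t}\mathrm{Lip}\bigl(\partial_t k(\cdot,t)\bigr)$, because Lipschitz constants pass through integral averages; since $\Omega$ is convex, $\|\nabla_x k_{-j}\|_\infty\le\mathrm{Lip}(k_{-j})$, and the gradient half of \eqref{p=1gamma} follows with no differentiation under the integral sign.
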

\begin{proof}
 The proof follows directly by Bernstein's lemma \cite[Chap. I, Theorem 6.3]{katznelson}.
\end{proof}
%%%%%%%%%%%%%%%%%%%%%%%%%%%%%%%%%%%%%%%%%%%%%%%
%%%%%%%%%%%%%%%%%%%%%%%%%%%%%%%%%%%%%%%%%%%%%%%

%%%%%%%%%%%%%%%%%%%%%%%%%%%%%%%%%%%%%%%%%%%%%%%%%%%%%%%%%%%%%
As a consequence we obtain the following a priori error estimate for the source reconstructed by the method in Section \ref{Sec:Sourcerec_poly}.
\begin{theorem}(Error estimate)\label{error_estimate}
Let $M \geq 1$ be a positive integer, $a\in C^{2,s}(\ol \OM)$, $k\in C^{1,s}_{per}([-1,1]; Lip(\Omega) )$, for $s>1/2$, be known, and $f\in H^1(\OM)$ and $u\in H^1(\OM\times\sph)$ be unknown functions satisfying \eqref{TransportScatEq1}.
Let $\bu$ be the unknown sequence of nonpositive Fourier modes of $u$ as in \eqref{boldu}, and $f^{(M)}$ in \eqref{fsource_Scatpoly} be the reconstructed source using the  noisy boundary data $\widetilde{\bg}\lvert_{\Gam} \in l^{2,\frac{1}{2}}(\BN;H^{1/2}(\Gam))$. Let 
 $\delta \bg = \bu \lvert_{\Gam} - \widetilde{\bg} $ be the error in the data.
The error $\delta f =  f -  f^{(M)} $ in the reconstructed source is estimated by 
\begin{align}\label{del_f}
 \lnorm{ \delta f }_{L^{2}(\OM)}^2 \leq (1+C)^{M+1} \left( c_1 \lnorm{L^{M+1}\bu}^2 +   c_2 \lnorm{L^{M} \delta \bg \lvert_{\Gam} }^2_{\frac{1}{2},\frac{1}{2}}  +c_3 \lnorm{\delta \bg \lvert_{\Gam} }^2  \right),
\end{align} where $c_1 = 96 (1+\mu)\sigma^4 \gamma_{M}^2$, $c_2 = 4\pi (1+\mu) \sigma^4$, $c_3 = \max \left\{2 \mu \sigma^4,2\right\} $, with $\sigma$ in \eqref{sigma_defn}, $\gamma_{M}$ in \eqref{p=1gamma},
and $C$ depends only on $\OM$ and $\ds \max \left\{ 1, \max_{0\leq j \leq M}    \|a- k_{-j}\|_{L^\infty(\Omega)}^2  \right\}$.
\end{theorem}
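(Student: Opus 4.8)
The plan is to read the reconstruction error directly off the reconstruction formula \eqref{fsource_Scatpoly}, and then, for the difference $\bu-\bu^{(M)}$, to retrace the very two stages that were used to build $\bu^{(M)}$. Writing $\delta u_{-n}=u_{-n}-u^{(M)}_{-n}$, the formula \eqref{fsource_Scatpoly} gives $\delta f=2\re\bigl(\del\,\delta u_{-1}\bigr)+(a-k_0)\,\delta u_0$, so that
\[
\lnorm{\delta f}_{L^2(\OM)}^2\le 2\lnorm{\delta u_{-1}}_{H^1(\OM)}^2+C\lnorm{\delta u_0}_{H^1(\OM)}^2,
\]
exactly as in \eqref{fMest}, and it suffices to control $\delta u_0,\delta u_{-1}$ in $H^1(\OM)$. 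The organizing observation is \eqref{LMbqM_Beltrami}: the shifted difference $L^M\bq^{(M)}=e^{-G}L^M(\bu-\bu^{(M)})$ solves an \emph{inhomogeneous} Bukhgeim--Beltrami equation whose forcing is the truncation term $e^{-G}L^{M+1}K\bu$, while its boundary trace is the pure high-mode data error $L^M\bq^{(M)}\lvert_{\Gam}=e^{-G}L^M\delta\bg$ (since the Bukhgeim--Cauchy formula and the subsequent Poisson solves interpolate $\widetilde\bg$, the reconstruction $\bu^{(M)}$ has trace $\widetilde\bg$, so $(\bu-\bu^{(M)})\lvert_{\Gam}=\delta\bg$).

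Stage one recovers the two base modes $\delta u_{-M},\delta u_{-M-1}$, which are the first two components of $L^M(\bu-\bu^{(M)})=e^{G}L^M\bq^{(M)}$. First I apply the gradient estimate of Corollary \ref{gradEstCor} to \eqref{LMbqM_Beltrami}. Its forcing is estimated by combining Proposition \ref{eGprop} (the bound \eqref{eG_norm_onezero} for $e^{-G}$, costing a factor $2\sigma$ with $\sigma$ as in \eqref{sigma_defn}) with the smoothing Lemma \ref{key}(iii): since $k\in C^{1,s}_{per}$ satisfies \eqref{p=1gamma} with $p=1$ by Lemma \ref{lem_propk}, the estimate \eqref{LNp1} yields $\lnorm{e^{-G}L^{M+1}K\bu}_{1,0}\le 2\sigma\gamma_M\lnorm{L^{M+1}\bu}$. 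The trace term is handled likewise through the boundary mapping of $e^{-G}$, producing $\sigma\lnorm{L^M\delta\bg}_{\frac12,\frac12}$. This controls $\lnorm{\del L^M\bq^{(M)}}$; the Poincar\'e inequality \eqref{poincare} then controls $\lnorm{L^M\bq^{(M)}}$, and recovering $\dba L^M\bq^{(M)}$ from \eqref{LMbqM_Beltrami} itself upgrades this to the full $\lnorm{L^M\bq^{(M)}}_{0,1}$. A final application of $e^{G}$ (again via Proposition \ref{eGprop}, costing another factor of $\sigma$) bounds $\lnorm{\delta u_{-M}}_{H^1}^2+\lnorm{\delta u_{-M-1}}_{H^1}^2$ by a constant times $\sigma^4\gamma_M^2\lnorm{L^{M+1}\bu}^2$ plus the data contributions.

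Stage two propagates downward. For $0\le n\le M-1$ the difference modes satisfy the same relations as \eqref{fMinsys}, so applying $4\del$ turns each into a Dirichlet problem $\Laplace\,\delta u_{-n}=-4\del^2\delta u_{-n-2}-4\del\bigl[(a-k_{-n-1})\delta u_{-n-1}\bigr]$ with boundary value $\delta g_{-n}$, exactly as in \eqref{uM_est}. This produces a recursion $a_{n+2}\le C(a_{n+1}+a_n+b_{n+2})$ with $a_n=\lnorm{\delta u_{-n}}_{H^1}^2$ and $b_{n}$ the $H^{1/2}(\Gam)$ data errors, to which Lemma \ref{rec_result} applies; it expresses $\lnorm{\delta u_0}_{H^1}^2$ and $\lnorm{\delta u_{-1}}_{H^1}^2$ through the base modes of Stage one, the accumulated factor $(1+C)^{M+1}$, and the low-mode data errors $\delta g_{-M+j}$, the latter (together with the Poincar\'e boundary contribution) collected into the $c_3$ term. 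Substituting the Stage-one bounds and gathering constants gives \eqref{del_f}.

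The main obstacle is Stage one: the gradient estimate only controls $\del L^M\bq^{(M)}$, whereas the base modes are needed in full $H^1$, so one must recover the missing $\dba$-derivative from the equation and from Poincar\'e, and simultaneously carry the $e^{\pm G}$ conjugation through both the forcing and the boundary trace without losing the crucial decay $\gamma_M$ supplied by the scattering smoothing. It is precisely the bookkeeping of these operations --- the factor $12$ from Corollary \ref{gradEstCor}, the Poincar\'e factor $\mu$, the four copies of $e^{\pm G}$ yielding $\sigma^4$, the smoothing factor $\gamma_M^2$, and the doublings --- that produces the constants $c_1=96(1+\mu)\sigma^4\gamma_M^2$, $c_2=4\pi(1+\mu)\sigma^4$, and $c_3=\max\{2\mu\sigma^4,2\}$.
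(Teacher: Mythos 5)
Your proposal follows the paper's own proof essentially step for step: linearity reduces $\lnorm{\delta f}_{L^2(\OM)}^2$ to the estimate \eqref{fMest} applied to the difference modes, the shifted map $L^M\bq^{(M)}$ solving \eqref{LMbqM_Beltrami} is controlled via Corollary \ref{gradEstCor} combined with Proposition \ref{eGprop} and Lemma \ref{key}(iii) (with $p=1$ supplied by Lemma \ref{lem_propk}), Poincar\'e upgrades the gradient bound to the base modes $\delta u_{-M},\delta u_{-M-1}$, and the same bookkeeping produces the identical constants $c_1,c_2,c_3$. The only visible difference is that you explicitly recover the $\dba$-derivative from the equation to justify the full $\lnorm{\cdot}_{0,1}$ bound, a point the paper passes over implicitly; this is a refinement of, not a departure from, the paper's argument.
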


\begin{proof}

Since $u\in H^1(\OM\times\sph)$, $\bu  \in l^{2,1}(\BN;H^1(\OM))$, in particular $\bu \in l^{2,\frac{1}{2}}(\BN;H^1(\OM))$.

From linearity of the problem, the error $\delta f $ also satisfy \eqref{fMest}:
\begin{align}\label{delta_f}
 \lnorm{ \delta f }_{L^{2}(\OM)}^2 &\leq 2(1+C)^{M+1} \left( \lnorm{\delta u_{-M-1}}_{H^{1}(\OM)}^2 +   \lnorm{\delta u_{-M}}_{H^1(\Omega)}^2 + \sum_{j=1}^{M} \lnorm{\delta g_{-M+j}}_{H^{1/2}(\Gam)}^2  \right),
\end{align} where $\delta g_{m}$ for $ -M+1\leq m \leq 0$, is the $m$-th Fourier coefficient of $\delta \bg$.

 Recall that the $\delta u_{-M}$ and $\delta u_{-M-1}$ are the first two components of $e^{G}L^{M}\bq^{(M)}$. Using the gradient estimate in \eqref{estimateSimple} we have
\begin{align}\nonumber
 \lnorm{ L^{M} \del \bq^{(M)}}^2& \leq 12\|e^{-G}L^{M+1} K\bu\|_{1,0}^2+2\pi\|L^{M}\bq^{(M)} \lvert_{\Gam}\|^2_{\frac{1}{2},\frac{1}{2}}\\ \nonumber
 &\leq  48 \lnorm{\alpha}_{l^{1,1}_{\INF}(\ol \OM)}^2 \lnorm{L^{M+1}K\bu}_{1,0}^2 + 2\pi \lnorm{L^{M}\bq^{(M)} \lvert_{\Gam}}^2_{\frac{1}{2},\frac{1}{2}}\\ \label{cor_estimate}
 &\leq 48 \sigma^2 \gamma_{M}^2 \lnorm{L^{M+1}\bu}^2 + 2\pi \lnorm{L^{M}\bq^{(M)} \lvert_{\Gam}}^2_{\frac{1}{2},\frac{1}{2}},
\end{align}where the first inequality uses \eqref{eG_norm_onezero} in Proposition \ref{eGprop}, and the second inequality uses \eqref{LNp1} with $p=1$.
By Poincare and using \eqref{cor_estimate},
\begin{align}\nonumber
  \lnorm{ \delta u_{-M-1} }_{H^{1}(\OM)}^2 &+ \lnorm{ \delta u_{-M}}_{H^{1}(\OM)}^2 \leq \lnorm{e^{G} L^{M}\bq^{(M)}}^2_{0,1} 
  \leq \sigma^2 \lnorm{L^{M}\bq^{(M)}}^2_{0,1} \\ \nonumber
  &\leq (1+\mu)\sigma^2 \lnorm{ L^{M} \del \bq^{(M)}}^2 + \mu \sigma^2 \lnorm{L^{M} \bq^{(M)} \lvert_{\Gam}}^2 \\ \nonumber
  &\leq 48 (1+\mu)\sigma^4 \gamma_{M}^2 \lnorm{L^{M+1}\bu}^2 + 2\pi (1+\mu) \sigma^2 \lnorm{L^{M}\bq^{(M)} \lvert_{\Gam}}^2_{\frac{1}{2},\frac{1}{2}} + \mu \sigma^2 \lnorm{L^{M}\bq^{(M)} \lvert_{\Gam}}^2 \\  \label{delta_UMboth}
  &\leq 48 (1+\mu)\sigma^4 \gamma_{M}^2 \lnorm{L^{M+1}\bu}^2 + 2\pi (1+\mu) \sigma^4 \lnorm{L^{M} \delta \bg \lvert_{\Gam}}^2_{\frac{1}{2},\frac{1}{2}} + \mu \sigma^4 \lnorm{L^{M} \delta \bg \lvert_{\Gam}}^2.
\end{align}
From \eqref{delta_UMboth} and \eqref{delta_f}, the expression in \eqref{delta_f} becomes
\begin{align*}%\label{del_f1}
 \lnorm{ \delta f}_{L^{2}(\OM)}^2 
 &\leq (1+C)^{M+1} \left( c_1 \lnorm{L^{M+1}\bu}^2 +   c_2 \lnorm{L^{M} \delta \bg \lvert_{\Gam}}^2_{\frac{1}{2},\frac{1}{2}} +2 \mu \sigma^4 \lnorm{L^{M} \delta \bg \lvert_{\Gam}}^2 
 +2 \sum_{j=1}^{M} \lnorm{ \delta g_{-M+j} }_{H^{1/2}(\Gam)}^2    \right)\\
 &\leq (1+C)^{M+1} \left( c_1 \lnorm{L^{M+1}\bu}^2 +   c_2 \lnorm{L^{M} \delta \bg \lvert_{\Gam}}^2_{\frac{1}{2},\frac{1}{2}}  +c_3 \lnorm{ \delta \bg\lvert_{\Gam}}^2  \right),
\end{align*} where constant  $c_1 = 96 (1+\mu)\sigma^4 \gamma_M^2$, $c_2 = 4\pi (1+\mu) \sigma^4$, $c_3 = \max \left\{2 \mu \sigma^4,2\right\} $, and $C$ depends only on $\OM$ and
$\ds \max \left\{ 1, \max_{0\leq j \leq M}  \lnorm{a- k_{-j}}_{L^\infty(\Omega)}^2  \right\}$.
Thus \eqref{del_f} follows.
\end{proof}

For the case when $k$ is a polynomial in the angular variable, $k_{-j} =0$ for $j \geq M+1$,  we obtain the following stability result.%immediate corollary. $\gamma_M=0$
\begin{cor}(Stability)\label{Cor_errorEst}
 Assume the hypotheses in Theorem \ref{error_estimate} with $k$ being a polynomial of degree $M$, i.e., $k_{-j} =0$ for $j \geq M+1$.
Then we have the following stability estimate  
\begin{align}\label{del_fkvanish}
\lnorm{ \delta f}_{L^{2}(\OM)}^2 \leq (1+C)^{M+1} \left( c_2 \lnorm{L^{M} \delta \bg \lvert_{\Gam}}^2_{\frac{1}{2},\frac{1}{2}}  +c_3 \lnorm{ \delta \bg \lvert_{\Gam}}^2  \right),
\end{align} where $c_2 = 4\pi (1+\mu) \sigma^4$, $c_3 = \max \left\{2 \mu \sigma^4,2\right\} $, with $\sigma$ in \eqref{sigma_defn}, $\gamma_{M}$ in \eqref{p=1gamma},
and $C$ depends only on $\OM$ and $\ds \max \left\{ 1, \max_{0\leq j \leq M}   \lnorm{a- k_{-j}}_{L^\infty(\Omega)}^2  \right\}$.
\end{cor}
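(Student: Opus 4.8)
The plan is to obtain Corollary \ref{Cor_errorEst} as the special case of Theorem \ref{error_estimate} in which the modeling error disappears, leaving only the contribution of the data error. The crucial observation is that the polynomial hypothesis $k_{-j}=0$ for $j\geq M+1$ forces the inhomogeneity in \eqref{LMbqM_Beltrami} to vanish identically. Indeed, by the definition of the Fourier multiplier \eqref{multiplier}, the sequence $K\bu$ has at most the entries $k_0 u_0,\dots,k_{-M}u_{-M}$ nonzero; consequently the $n$-th component of $L^{M+1}K\bu$ equals $k_{-(n+M+1)}u_{-(n+M+1)}=0$ for every $n\geq 0$, so that $L^{M+1}K\bu=0$ and hence $e^{-G}L^{M+1}K\bu=0$.

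First I would record that, by \eqref{LMbqM_Beltrami}, the shifted map $L^M\bq^{(M)}$ therefore solves the \emph{homogeneous} Bukhgeim-Beltrami equation \eqref{BukhBeltsimple} with $\bbf=0$. Applying Corollary \ref{gradEstCor} (the $B=0$ gradient estimate) in place of the full estimate \eqref{cor_estimate} yields
\begin{align*}
\lnorm{L^M\del\bq^{(M)}}^2\leq 2\pi\lnorm{L^M\bq^{(M)}\lvert_{\Gam}}^2_{\frac{1}{2},\frac{1}{2}},
\end{align*}
which is precisely \eqref{cor_estimate} with the truncation term $48\sigma^2\gamma_M^2\lnorm{L^{M+1}\bu}^2$ removed.

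From here the argument runs verbatim as in the proof of Theorem \ref{error_estimate}. I would invoke the Poincar\'e inequality \eqref{poincare} together with the mapping bounds for $e^{\pm G}$ from Proposition \ref{eGprop} (encoded in $\sigma$) to pass from $\lnorm{L^M\del\bq^{(M)}}$ to $\lnorm{\delta u_{-M-1}}_{H^1(\OM)}^2+\lnorm{\delta u_{-M}}_{H^1(\OM)}^2$, using that these two modes are the leading components of $e^{G}L^M\bq^{(M)}=L^M(\bu-\bu^{(M)})$, and expressing the boundary trace of $L^M\bq^{(M)}$ in terms of the data error $L^M\delta\bg\lvert_{\Gam}$. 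Inserting the outcome into the linearized reconstruction estimate \eqref{delta_f} and bounding $\sum_{j=1}^M\lnorm{\delta g_{-M+j}}_{H^{1/2}(\Gam)}^2\leq\lnorm{\delta\bg\lvert_{\Gam}}^2$ then reproduces \eqref{del_fkvanish} with the stated constants $c_2=4\pi(1+\mu)\sigma^4$ and $c_3=\max\{2\mu\sigma^4,2\}$; the constant $c_1\lnorm{L^{M+1}\bu}^2$ is absent precisely because of the vanishing inhomogeneity. I do not expect any genuine obstacle here: the single point requiring care is the verification that $L^{M+1}K\bu=0$, which rests on the fact that $L$ and $K$ do \emph{not} commute and that $L^{M+1}$ shifts past every index at which the multiplier $K$ can be nonzero.
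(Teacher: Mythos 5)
Your proposal is correct and is in substance the paper's own argument: the paper proves the corollary in one line by noting that $k_{-j}=0$ for $j\geq M+1$ forces $\gamma_M=0$ in \eqref{p=1gamma}, hence $c_1=0$ in \eqref{del_f}, and \eqref{del_fkvanish} follows directly from Theorem \ref{error_estimate}. Your version merely unwinds where that term vanishes inside the proof of the theorem --- observing $L^{M+1}K\bu=0$ so that \eqref{LMbqM_Beltrami} becomes homogeneous and Corollary \ref{gradEstCor} applies with $\bbf=0$ --- which is equivalent, just longer than the direct specialization $\gamma_M=0$.
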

\begin{proof}
This is the case $\gamma_{M}=0$ in \eqref{del_f}. So  $c_1=0$ and result \eqref{del_fkvanish} follows.
\end{proof}
We stress that if $k$ is not of polynomial type, then 
$ (1+C)^{M+1} \lnorm{L^{M+1}\bu}^2$ can not be made arbitrarily small. However, if  the anisotropic part of scattering is small enough (same case as in \cite{balTamasan07}), then we can prove a convergence result.
%%%%%%%%%%%%%%%%%%%%%%%%%%%%%%%%%%%%%%%%%%%%%%%%%%%%%%%%%%%%%%%%%%%%%%%%

We have the following convergence result for the weakly anistropic scattering media. 
%%%%%%%%%%%%%%%%%%%%%%%%%%%%%%%%%%%%%%
\begin{theorem}[]\label{convergence_result}
%%%%%%%%%%%%%%%%%%%%%%%%%%%%%%%%%%%%%%%
Let $a\in C^{2,s}(\ol \OM)$, $k\in C^{1,s}_{per}([-1,1]; Lip(\Omega) )$, for $s>1/2$, and $f\in H^1(\Omega)$.
Assume that  
\begin{align}\label{gamma_small}
\gamma = \sup_{j \geq 1} (1+j)^{p} \max \left\{ \lnorm{k_{-j}}_{\INF}, \lnorm{ \nabla_x k_{-j}}_{\INF} \right\}  < \frac{\sqrt{2} -1}{2\sqrt{\mu}\sigma^2},
\end{align} where $\mu$ as in Poincar\' e inequality \eqref{poincare} and $\sigma$ as in \eqref{sigma_defn}.\\
For each arbitrarily fixed $M$, let $f^{(M)}$ in \eqref{fsource_Scatpoly} be the reconstructed source using the 
data 
\begin{align}\label{data_fM}
 \widetilde{\bg} = (g_0, g_{-1}, \cdots , g_{-M+1}, g_{-M}, \widetilde{g_{-M-1}}, \widetilde{g_{-M-2}}, \cdots ),
\end{align} where $g_{-j}$, for $0\leq j\leq M$, are assumed exact data  and $\widetilde{g_{-j}}$, for  $j > M$, are allowed to be noisy.
Then 
\begin{align}
 \lnorm{ f^{(M)}  - f }^2_{L^2(\OM)} \to 0, \quad \text{as} \quad M \to \INF.
\end{align}

\end{theorem}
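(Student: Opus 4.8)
The plan is to avoid the mode-by-mode Poisson recursion of Section \ref{Sec:Sourcerec_poly}, whose error propagation is responsible for the amplifying factor $(1+C)^{M+1}$ in Theorem \ref{error_estimate}; that factor cannot be controlled as $M\to\INF$ and must be circumvented. Instead I would estimate the two leading error modes $\delta u_{0}=u_{0}-u^{(M)}_{0}$ and $\delta u_{-1}=u_{-1}-u^{(M)}_{-1}$ directly. From \eqref{fsource_Scatpoly} and \eqref{freeeq} the source error is $\delta f=2\re(\del\delta u_{-1})+(a-k_{0})\delta u_{0}$, whence
\begin{align*}
\lnorm{\delta f}_{L^2(\OM)}^2\le 8\lnorm{\delta u_{-1}}_{H^1(\OM)}^2+2\lnorm{a-k_0}_{\INF}^2\lnorm{\delta u_{0}}_{H^1(\OM)}^2.
\end{align*}
Since $\delta u_{0}$ and $\delta u_{-1}$ are the first two components of $\bu-\bu^{(M)}=e^{G}\bq^{(M)}$, with $\bq^{(M)}$ as in \eqref{q_defn}, the bound \eqref{eG_norm_zeroone} for $e^{G}$ followed by the Poincar\'e inequality \eqref{poincare} gives
\begin{align*}
\lnorm{\delta f}_{L^2(\OM)}^2\le C_0\,\sigma^2\Bigl((1+\mu)\lnorm{\del\bq^{(M)}}^2+\mu\lnorm{\bq^{(M)}\lvert_{\Gam}}^2\Bigr),
\end{align*}
with $C_0$ depending only on $\lnorm{a-k_0}_{\INF}$. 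The point is that the leading modes are now controlled by $\del\bq^{(M)}$ on all of $\OM$, with no recursion and hence no factor $(1+C)^{M+1}$.

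Next I would set up the equation for $\bq^{(M)}$. The reconstructed $\bu^{(M)}$ satisfies the truncated Beltrami system \eqref{beltramiA_polyVM} with trace $\bu^{(M)}\lvert_{\Gam}=\widetilde{\bg}$, so $\bq^{(M)}=e^{-G}(\bu-\bu^{(M)})$ solves \eqref{bqM_Beltrami}. Adding and subtracting $e^{-G}LK^{(M)}\bu$ rewrites it as the inhomogeneous Bukhgeim-Beltrami equation
\begin{align*}
\dba\bq^{(M)}+L^2\del\bq^{(M)}=B\bq^{(M)}+\bbf^{(M)},\qquad B:=e^{-G}LK^{(M)}e^{G},\quad \bbf^{(M)}:=e^{-G}L(K-K^{(M)})\bu.
\end{align*}
I would then invoke the gradient estimate Theorem \ref{new_estimate} for $\bq^{(M)}\in l^{2,\frac{1}{2}}(\BN;H^1(\OM))$, which bounds $\lnorm{\del\bq^{(M)}}$ by an expression in $\lnorm{\bbf^{(M)}}_{1,0}$ and the traces $\lnorm{\bq^{(M)}\lvert_{\Gam}}_{0,\frac{1}{2}}$, $\lnorm{\bq^{(M)}\lvert_{\Gam}}_{\frac{1}{2},\frac{1}{2}}$, provided the operator $B$ meets the smallness condition $\epsilon=\sqrt{2\mu}\,C_B<\sqrt2-1$ of \eqref{eps_defn1}.

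Verifying this smallness under hypothesis \eqref{gamma_small} is the step I expect to be the main obstacle, because the isotropic coefficient $k_0$ is \emph{not} small. The resolution is structural: the left translation inside $B$ discards the mode-$0$ entry, since $LK^{(M)}\by=\langle k_{-1}y_{-1},k_{-2}y_{-2},\dots,k_{-M}y_{-M},0,\dots\rangle$, so $k_0$ never enters $B$ and only the anisotropic modes $k_{-1},k_{-2},\dots$—exactly those bounded by $\gamma$ in \eqref{gamma_small}—survive. Using $(1+(j-1)^2)\le(1+j)^2$ and the decay of the $k_{-j}$ one gets $\lnorm{LK^{(M)}\by}_{1,0}\le\gamma\lnorm{\by}$; combined with the mapping bounds \eqref{eG_norm0}, \eqref{eG_norm_onezero}, \eqref{eG_norm_zeroone}, \eqref{eG_norm_oneone} for $e^{\pm G}$, this yields both operator estimates \eqref{B_opnorm10}, \eqref{B_opnorm11} with $C_B$ a fixed multiple of $\sigma^2\gamma$. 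The threshold in \eqref{gamma_small} is calibrated precisely so that $\epsilon=\sqrt{2\mu}\,C_B<\sqrt2-1$, making the constant $a=1-2\epsilon-\epsilon^2$ in \eqref{newestimate1} a fixed positive number. One must also confirm $\bbf^{(M)}\in l^{2,1}(\BN;L^2(\OM))$ and that $B$ maps as required, both of which follow from Lemma \ref{key}(iii) and Proposition \ref{eGprop}.

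Finally I would let $M\to\INF$. The source $\bbf^{(M)}=e^{-G}L(K-K^{(M)})\bu$ is built only from the Fourier modes of $\bu$ of order at most $-(M+1)$; estimating the high-frequency tail of $K\bu$ by \eqref{LNp1} with $p=1$ and then applying \eqref{eG_norm_onezero} gives $\lnorm{\bbf^{(M)}}_{1,0}\le C\sigma\gamma_M\lnorm{L^{M+1}\bu}$, and since $\bu\in l^{2,1}(\BN;H^1(\OM))$ the tail $\lnorm{L^{M+1}\bu}^2=\sum_{j\ge M+1}\lnorm{u_{-j}}_{L^2(\OM)}^2\to0$. For the boundary term, the exact low-mode data give $\bq^{(M)}\lvert_{\Gam}=e^{-G}(\bg-\widetilde{\bg})$ with $\bg-\widetilde{\bg}$ supported in the modes of order at most $-(M+1)$; by the boundary analogue of \eqref{eG_norm0} the norms $\lnorm{\bq^{(M)}\lvert_{\Gam}}_{0,\frac{1}{2}}$ and $\lnorm{\bq^{(M)}\lvert_{\Gam}}_{\frac{1}{2},\frac{1}{2}}$ are dominated by the corresponding tails of $\lnorm{\bg}_{\frac{1}{2},\frac{1}{2}}$ and $\lnorm{\widetilde{\bg}}_{\frac{1}{2},\frac{1}{2}}$, which vanish as $M\to\INF$. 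Hence $b$ and $c$ in \eqref{newestimate1} tend to $0$ while $a$ remains a fixed positive constant, so $\lnorm{\del\bq^{(M)}}\to0$ and $\lnorm{\bq^{(M)}\lvert_{\Gam}}\to0$; the reduction in the first paragraph then forces $\lnorm{f^{(M)}-f}_{L^2(\OM)}\to0$, as claimed.
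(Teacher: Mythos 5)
Your proposal is correct and follows essentially the same route as the paper's own proof: the paper likewise bypasses the Poisson recursion (and its $(1+C)^{M+1}$ factor), bounds $\lnorm{f-f^{(M)}}^2_{L^2(\OM)}$ by the $H^1$-norms of the first two components of $e^{G}\bq^{(M)}$ via Proposition \ref{eGprop} and Poincar\'e, rewrites \eqref{bqM_Beltrami} with the identical splitting $B^M=e^{-G}LK^{(M)}e^{G}$, $A^M\bu=e^{-G}L(K-K^{(M)})\bu$, verifies \eqref{B_opnorm10}--\eqref{B_opnorm11} with $C_B=\sqrt{2}\gamma\sigma^2$ so that \eqref{gamma_small} gives $\epsilon<\sqrt{2}-1$, and lets $M\to\INF$ in the gradient estimate of Theorem \ref{new_estimate}. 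Your observation that the left translation discards $k_0$, so only the anisotropic modes enter $B$, is exactly the structural fact the paper uses implicitly when invoking Lemma \ref{key}(iii) with the supremum taken over $j\geq 1$.
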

%%%%%%%%%%%%%%%%%%%%%%%%%%%%%%%%%%%%%%%

\begin{proof}
Since $f\in H^1(\Omega)$, the solution $u\in H^1(\OM\times\sph)$, and consequently $\bu  \in l^{2,1}(\BN;H^1(\OM))$, in particular $\bu \in l^{2,\frac{1}{2}}(\BN;H^1(\OM))$.

The error we make in the source reconstruction is controlled by the sequence valued map $\bq^{M} = e^{-G} (\bu-\bu^{M})\in l^{2,\frac{1}{2}}(\BN;H^1(\OM))$, which solves  
\begin{align}\label{bqM_BeltramiBMAM}
\dba\bq^{(M)} +L^2 \del\bq^{(M)} =  B^{M}\bq^{M} + A^M \bu,
\end{align} where $B^M$ and $A^M$ are operators given by 
\begin{align}\label{BMAM_defn}
B^M = e^{-G} L K^{(M)}e^G, \quad \text{and} \quad A^M = e^{-G} L (K- K^{(M)}),
\end{align} with $e^{\pm  G}$ as in \eqref{eGop}, $K$ as in \eqref{multiplier} and $K^{(M)}$ is the truncated Fourier multiplier as in  \eqref{KMmultiplier}.

The trace 
\begin{align}\label{q_boundary}
 \bq^{M}\lvert_{\Gam} = e^{-G} \left(\bu - \bu^{(M)}\right)\lvert_{\Gam} = e^{-G} \delta \bg \lvert_{\Gam}
\end{align} is determined by the error in the data
\begin{align}\label{delta_bg}
 \delta \bg \lvert_{\Gam} = \left(\bu - \bu^{(M)}\right)\lvert_{\Gam} =  (0,0,\cdots,0, g_{-M-1} - \widetilde{g_{-M-1}} , g_{-M-2} - \widetilde{g_{-M-2}}, \cdots ).
\end{align}

For  $c_1 = \max \left\{   \lnorm{(a-k_0)}^2_{\INF},2\right\} $, the norm 
\begin{align} \nonumber
	 \lnorm{f - f^{(M)} }^2_{L^2(\OM)}  &\leq \lnorm{(a-k_0)}^2_{\INF} \lnorm{u_0 - u^{(M)}_{0} }^2_{L^2(\OM)}  + 2 \lnorm{\del u_{-1} - \del u^{(M)}_{-1} }^2_{L^2(\OM)} \\ \nonumber
	  &\leq c_1  \left( \lnorm{ u_0 - u^{(M)}_{0} }_{H^{1}(\OM)}^2 + \lnorm{ u_{-1} - u^{(M)}_{-1} }_{H^{1}(\OM)}^2 \right)\\ \nonumber
	  &\leq c_1  \lnorm{e^{G} \bq^{(M)} }^2_{0,1} \\ \nonumber &\leq c_1 \sigma^2 \left( \lnorm{\bq^{(M)}}^2 + \lnorm{\del \bq^{(M)} }^2 \right)  \\ \nonumber
	  &\leq c_1 \sigma^2 (1+\mu) \lnorm{\del \bq^{(M)} }^2 + c_1\sigma^2 \mu \lnorm{\bq^{(M)} \lvert_{\Gam}}^2_{0,\frac{1}{2}},  \\ \label{fnorm_diff}
	  &\leq c_1 \sigma^2 (1+\mu) \lnorm{\del \bq^{(M)} }^2 + c_1\sigma^4 \mu \lnorm{ L^{M+1} (\bg- \widetilde{\bg}) }^2_{0,\frac{1}{2}},
\end{align} where in the fourth and last inequality we use Proposition \ref{eGprop}, in the next to last inequality we use the Poincar\' e inequality \eqref{poincare} and in the last inequality we have used \eqref{q_boundary}.\\
To estimate the gradient term $\lnorm{\del \bq^{(M)} }^2$ in \eqref{fnorm_diff}, we employ Theorem \ref{newidentity} with $\bv = \bq^M$, $B = B^M$, and $\bbf = A^M \bu$ in there as follows.
For $\bu \in l^{2,\frac{1}{2}}(\BN;H^1(\OM))$, we have 
 \begin{align*}
        \lnorm{A^M \bu}_{1,0} &\leq \sigma  \lnorm{ L (K-K^{(M)})\bu}_{1,0} = \sigma  \lnorm{ L^{M+1}K \bu}_{1,0} \leq \gamma \sigma \lnorm{L^{M+1} \bu}, %\quad \text{and} \\
%	||A^M \bu||_{1,1} &\leq \sigma  || L (K-K^{(M)})\bu||_{1,1} = \sigma  || L^{M+1}K \bu||_{1,1} \leq  \sqrt{2}\gamma \sigma || L^{M+1} \bu||_{0,1},
   \end{align*} where the first inequality uses Proposition \ref{eGprop}, and the last inequality uses Lemma \ref{key} (iii) for $p=1$. 
Since  $\bq^{M} \in l^{2,\frac{1}{2}}(\BN;H^1(\OM))$, we have
   \begin{align*}
	\lnorm{B^M \bq^{M} }_{1,0} &\leq \sigma \lnorm{L K^{(M)} e^G \bq^{M}}_{1,0} \leq \gamma \sigma  \lnorm{e^G \bq^{M}} \leq \gamma \sigma^2  \lnorm{\bq^{M}}, \quad \text{and} \\
	\lnorm{B^M \bq^{M} }_{1,1} &\leq \sigma \lnorm{L K^{(M)} e^G \bq^{M}}_{1,1 } \leq \sqrt{2}\gamma \sigma \lnorm{ e^G\bq^{M}}_{0,1} \leq  \sqrt{2}\gamma \sigma^2  \lnorm{\bq^{M}}_{0,1},
   \end{align*} where the first inequality uses Proposition \ref{eGprop}, and the last inequality uses Lemma \ref{key} (iii) for $p=1$. 
We can apply Theorem \ref{newidentity} with $C_B = \sqrt{2}\gamma \sigma^2$ and $\epsilon  = \sqrt{2\mu}C_B < \sqrt{2} -1$ by \eqref{gamma_small}, to obtain
\begin{align*}%\label{cor_estimate}
0\leq \lnorm{\del \bq^{(M)} }  \leq  \frac{b_M+\sqrt{b_{M}^2+4ac_M}}{2a},
\end{align*}
where
\begin{align*}
 &a = 1- 2\epsilon - \epsilon^2>0, \\ 
 &b_M = 2 \epsilon \lnorm{ \bq^{M} \lvert_{\Gam} }_{0,\frac{1}{2}} + 2 \sqrt{2} \gamma \sigma \lnorm{L^{M+1} \bu}, \\
 &c_M=\epsilon^2 \lnorm{ \bq^{M} \lvert_{\Gam} }_{0,\frac{1}{2}}^{2} +\pi \lnorm{ \bq^{M} \lvert_{\Gam} }^{2}_{\frac{1}{2},\frac{1}{2}} + 2 \gamma^2 \sigma^{2} \lnorm{L^{M+1} \bu}^2.
\end{align*}
Since the data is assumed exact for the first $M$ modes, $\ds (\bg- \widetilde{\bg})\lvert_{\Gam}$ in \eqref{delta_bg}  has the first $M$ modes equal to zero. Thus  
\begin{align*}
 \lnorm{ \bq^{M} \lvert_{\Gam} }_{0,\frac{1}{2}}^{2} &\leq \sigma^2 \lnorm{ L^{M+1} (\bg- \widetilde{\bg}) \lvert_{\Gam} }_{0,\frac{1}{2}}^{2} \to 0, \\
 \lnorm{ \bq^{M} \lvert_{\Gam} }_{\frac{1}{2},\frac{1}{2}}^{2} &\leq \sigma^2 \lnorm{ L^{M+1} (\bg- \widetilde{\bg}) \lvert_{\Gam} }_{\frac{1}{2},\frac{1}{2}}^{2} \to 0,
\end{align*} as $M \to \INF$.
Since $\bu$ is fixed,  the expression $ \lnorm{L^{M+1} \bu} \to 0$, as $M \to \INF$.
Therefore, both $b_M$ and $c_M$ tends to zero and %as $M$ goes to $\INF$
  $\ds \lim_{M\to \INF} \lnorm{\del \bq^{(M)} } \to 0.$ 
Using \eqref{fnorm_diff}, the norm $\ds \lim_{M\to \INF} \lnorm{f - f^{(M)}}^2_{L^2(\OM)}\to 0$. 
\end{proof}
%%%%%%%%%%%%%%%%%%%%%%%%%%%%%%%%%%%%%%%%%%%%%%%%%%%%%%%%%%%%

%%%%%%%%%%%%%%%%%%%%%%%% NUMERICAL EXAMPLES %%%%%%%%%%%%%%%%%%%%%%%
\section{Numerical Experiments}\label{Sec:numerics}

In this section we demonstrate the numerical feasibility of the method on two numerical examples. We focus on the numerical results, and leave their realization details for a separate discussion ~\cite{fujiwaraSadiqTamasan}. Although the theoretical results assume a source of square integrable gradient, we shall see below that the numerical reconstruction works even in the case of a discontinuous source.

The numerical experiments consider 
\begin{align*}
 \btheta\cdot\nabla u(z,\btheta) +a(z) u(z,\btheta) &=  \int_{\sph} k(z,\btheta\cdot\btheta')u(z,\btheta') d\btheta' + f(z) , \quad (z,\btheta)\in \OM\times\sph,
\end{align*} with the two dimensional Henyey-Greenstein (Poisson) model of scattering kernel
\begin{align}\label{poisson}
 k(z, \btheta\cdot\btheta') = \mu_{\text{s}}(z) \dfrac{1}{2\pi} \dfrac{1-g^2}{1-2g \btheta\cdot\btheta' + g^2},
\end{align} and the attenuation coefficient 
\begin{align*}
 a(z) = \mu_{\text{s}}(z) + \mu_{\text{a}}(z),
\end{align*} where $\mu_\text{s}$ is the scattering coefficient, and $\mu_\text{a}$ is the absorption coefficient.
The parameter $g$ in \eqref{poisson} models a degree of anisotropy, with $g=1$  for the ballistic regime and $g=0$ for the isotropic scattering regime.
In our numerical experiments we work with $g=1/2$ and  $\mu_{\text{s}}(z)=5$, the latter value shows that, on average, the particle scatters within $1/5$ units of length along the path.

%Let %$R$ be the rectangular region and $B_i$ be the circular region inside 
%Let $R  = (-0.25,0.5)\times(-0.15,0.15)$ be the rectangular region, and 
%\begin{align*}
%&R   = (-0.25,0.5)\times(-0.15,0.15) \; \text{be the rectangular region, and}  \\
%&B_1 = \left \{ (x,y) \::\: (x-0.5)^2 + y^2 < 0.3^2 \right \}, \quad 
%B_2 = \left\{ (x,y) \::\: \left(x+0.25\right)^2 + \left(y-\dfrac{\sqrt{3}}{4}\right)^2 < 0.2^2 \right\},
%\end{align*} %be the rectangular, respectively, circular subsets of $\Omega$ as shown in Figure~\ref{fig:inclusions}.
%be the circular region inside the unit disc $\OM$ as shown in Figure~\ref{fig:inclusions}.
%%%%%%%%%%%%%%%%%%%%%%%%%%%%%%%%%%%%%%%%%%%%

Let $R  = (-0.25,0.5)\times(-0.15,0.15)$ be the rectangular region, and 
\begin{align*}
B_1 &= \{ (x,y) \:;\: (x-0.5)^2 + y^2 < 0.3^2 \}, \quad \text{and} \quad 
B_2 = \left\{ (x,y) \::\: \left(x+0.25\right)^2 + \left(y-\dfrac{\sqrt{3}}{4}\right)^2 < 0.2^2 \right\},
\end{align*} %be the rectangular, respectively, circular subsets of $\Omega$ as shown in Figure~\ref{fig:inclusions}.
be the circular region inside the unit disc $\OM$ as shown in Figure~\ref{fig:inclusions}.

%%%%%%%%%%%%%%%%%%%%%%%%%%%%%%%%%%%%%%%%%%%%%
\begin{figure}[ht]
\begin{tikzpicture}[scale=2]
\draw [line width=2] (0,0) circle (1); % Omega
  \node at (0.9,0.9) {$\Omega$};

\draw [fill=gray,line width=0] (-0.25,0.433013) circle (0.2); % omega2 (left)

\draw [fill=gray,line width=0] (-0.25,-0.15) rectangle (0.5,0.15); % R
  \node at (-0.3,-0.3) {$R$};

\draw [dotted,line width=2] (0.5,0) circle (0.3); % omega1 (right)
  \node at (0.5,0.45) {$B_1$};

\draw [dotted,line width=2] (-0.25,0.433013) circle (0.2); % omega2 (left)
    \node at (-0.25,0.8) {$B_2$};
\end{tikzpicture}
\caption{\label{fig:inclusions}
Locations of inclusions in numerical examples.
The solution $u(z,\tta)$ is relatively strongly absorbed inside the dotted balls,
while the source $f(z)$ is positive in the gray regions.
}
\end{figure}
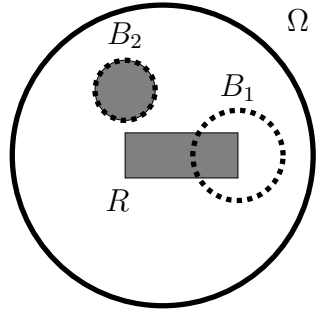

%%%%%%%%%%%%%%%%%%%%%%%%%%%%%%

We consider two examples for the attenuation coefficient $a(z)$ for two different functions $ \mu_{\text{a}}(z)$, while $\mu_s=5$ is the same.
In the first example, we work with a $C^2$ smooth absorption, whereas in the second example we consider attenuation to be discontinuous. 
In  the first example, the attenuation $a$ is $C^2$-smooth via some scaled and translated quartic polynomial
$(|z|+1)^2(|z|-1)^2$, $0 \leq |z| \leq 1$, in the $\epsilon$-neighborhoods of $\partial B_1$ and $\partial B_2$: 

\begin{equation}\label{attenuation_1}
\mu_{\text{a}}(z) = \begin{cases}
2, \quad &\text{in $\bigl\{\: z \::\: \dist(z,\partial B_1) \ge \epsilon\bigr\}\cap B_1$};\\
1, \quad &\text{in $\bigl\{\: z \::\: \dist(z,\partial B_2) \ge \epsilon\bigr\}\cap B_2$};\\
0.1, \quad & \text{in $\bigl\{\: z \::\: \dist(z,B_1) \ge \epsilon\bigr\}
                       \cap \bigl\{ z \::\: \dist(z,B_2) \ge \epsilon\bigr\}$}
\end{cases}
\end{equation}
with  $\epsilon = 0.025$. 

In both examples the data is generated with the same source 
\begin{equation}\label{exact_source}
f(z) = \begin{cases}
2,  \qquad & \text{in $R$};\\
1,  \qquad & \text{in $B_2$};\\
0,  \qquad & \text{otherwise}.
\end{cases}
\end{equation}
This is the function we reconstruct by implementing the method in Section \ref{Sec:Sourcerec_poly}.

We generate the boundary measurement by the numerical computation
of the forward problem by piecewise constant approximation following the method in \cite{fujiwara}. In the forward problem the triangular mesh has
%The locations of the three inclusions are used in the triangulation of the domain $\Omega$,
$1,966,690$ triangles,
while the velocity direction is split into $360$ equi-spaced intervals.

To obtain the data, we disregard the value of the solution inside and only keep the boundary values. The  boundary data, $u(z,\btheta)$ on $\partial\Omega\times S^1$, 
is represented in Figure~\ref{fig:smootha:boundary}:
\begin{figure}[ht]
\begin{minipage}{.6\textwidth}
\centering
\includegraphics[width=.8\textwidth]{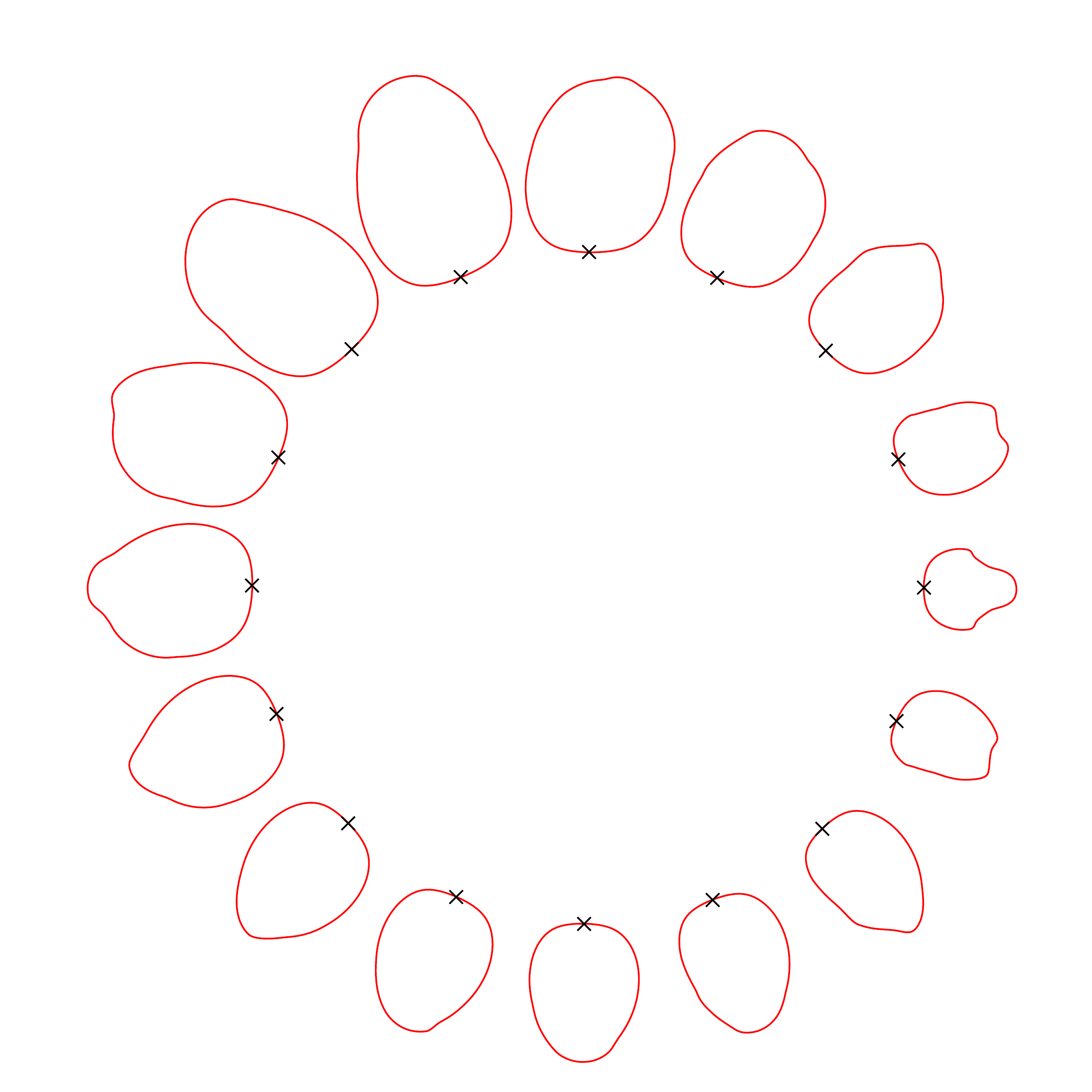}
\end{minipage}
\hfill
\begin{minipage}{.38\textwidth}
\centering
\includegraphics[width=\textwidth]{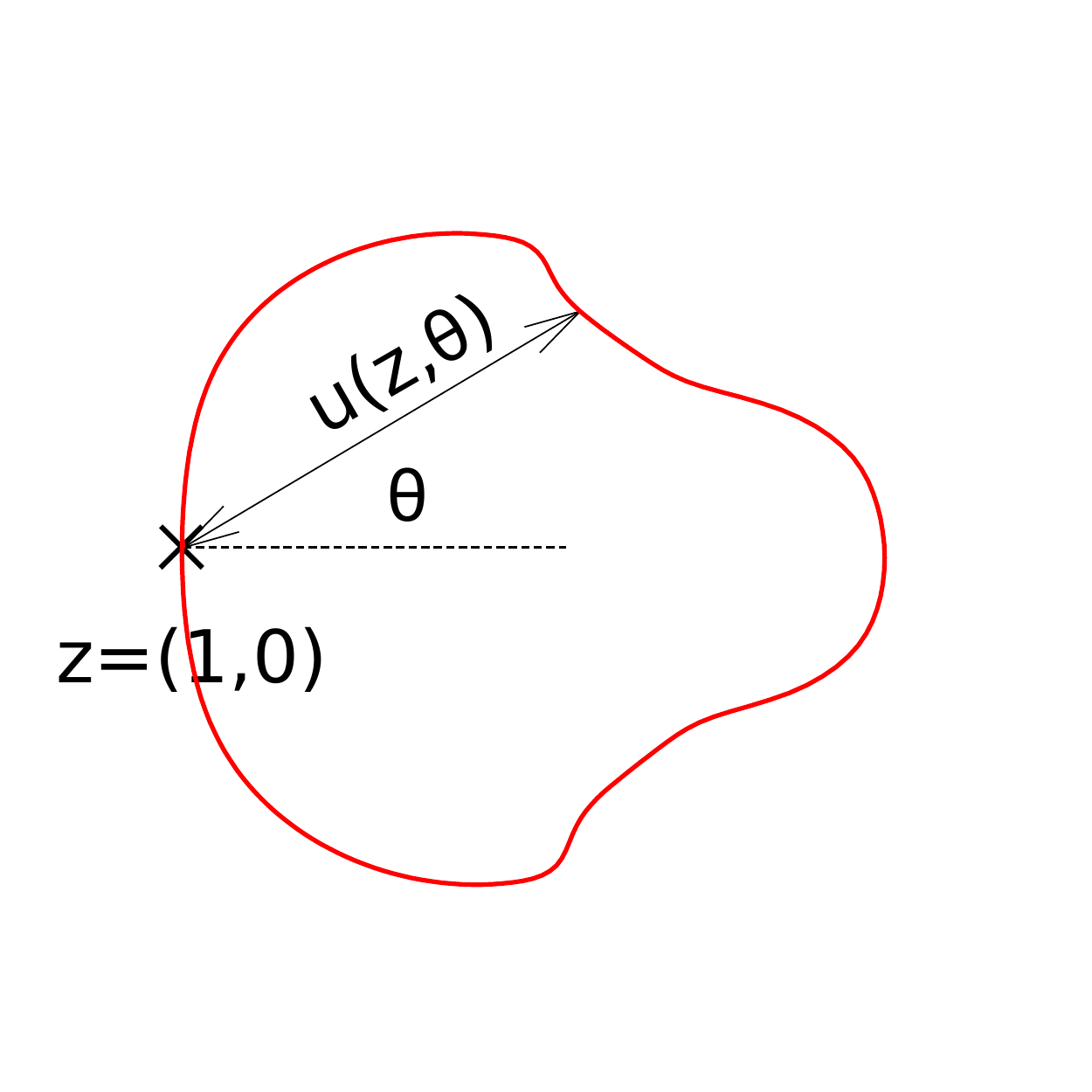}
\end{minipage}
\caption{\label{fig:smootha:boundary}Boundary measurement $u(z,\btheta)|_{\partial\Omega\times S^1}$
obtained by numerical computation of the corresponding forward problem.
The red curves are $\bigl\{z + 2u(z,\btheta)\btheta \::\: \btheta \in S^1\bigr\}$ for $z\in\partial\Omega$
indicated by the cross symbols ($\times$) (left).
The right figure is a magnification of that at $z=(1,0)$.
}
\end{figure}
For each $z \in \partial\Omega$ (indicated by a cross), we represent
the graph $\{\bigl(u(z,\btheta), \btheta\bigr):\;\btheta \in S^1\}$  as the (red) closed curve in the polar coordinates 
$\big\{ z + 2 u(z,\btheta)\btheta \::\: \btheta \in S^1\bigr\}$. Note that, since  $u|_{\Gamma_{-}} = 0$ (there is no incoming radiation),
the red curve lies outside the domain $\Omega$. The nearly tangential behavior at the boundary is the numerical evidence that the regime is far from ballistic, while the irregular shapes show that it is far from isotropic.

The reconstruction starts with solving \eqref{infMsys} with $M = 8$ via
the Bukhgeim-Cauchy integral formula \eqref{CauchyBukhgeimformula},
where the infinite series is replaced by a finite sums of up to $64$ terms.

In the reconstruction procedure,
all the steps, including the integral transforms $D[a]$, $R[a]$ and $H\bigl[R[a]\bigr]$,
have been processed numerically~\cite{fujiwaraSadiqTamasan}.
In solving the Poisson equation \eqref{Poisson_VM} and \eqref{VM_Gam},
the standard $P_1$ finite element method (constant and piecewise linear elements)
is employed.

The triangulation used in the reconstruction is different from that in the forward problem. In particular the reconstruction mesh consists of $6,998$ triangles (much less than the $1,966,690$ triangles used in the forward problem), and is generated without any information of the location of the subsets $R$, $B_1$, and $B_2$.

In the first numerical experiment the attenuation is given by \eqref{attenuation_1}. The reconstructed source $f(z)$ is shown in
Figure~\ref{fig:smootha:reconst} on the left.
On the right we show the cross section of $f(z)$
along the dotted diameter $y=-\sqrt{3}x$ passing through the origin and  the center of $B_2$. 

The reconstructed $f(z)$ shows a quantitative agreement with the exact source in \eqref{exact_source}. Similar to the attenuated X-ray tomography, the artifacts appear due to the co-normal singularities in the source, but also in the attenuation. This is because the most singular is the ballistic part (attenuated X-ray transform of the source). 
\begin{figure}[ht]
\centering
\begin{minipage}{.5\textwidth}
\includegraphics[width=\textwidth]{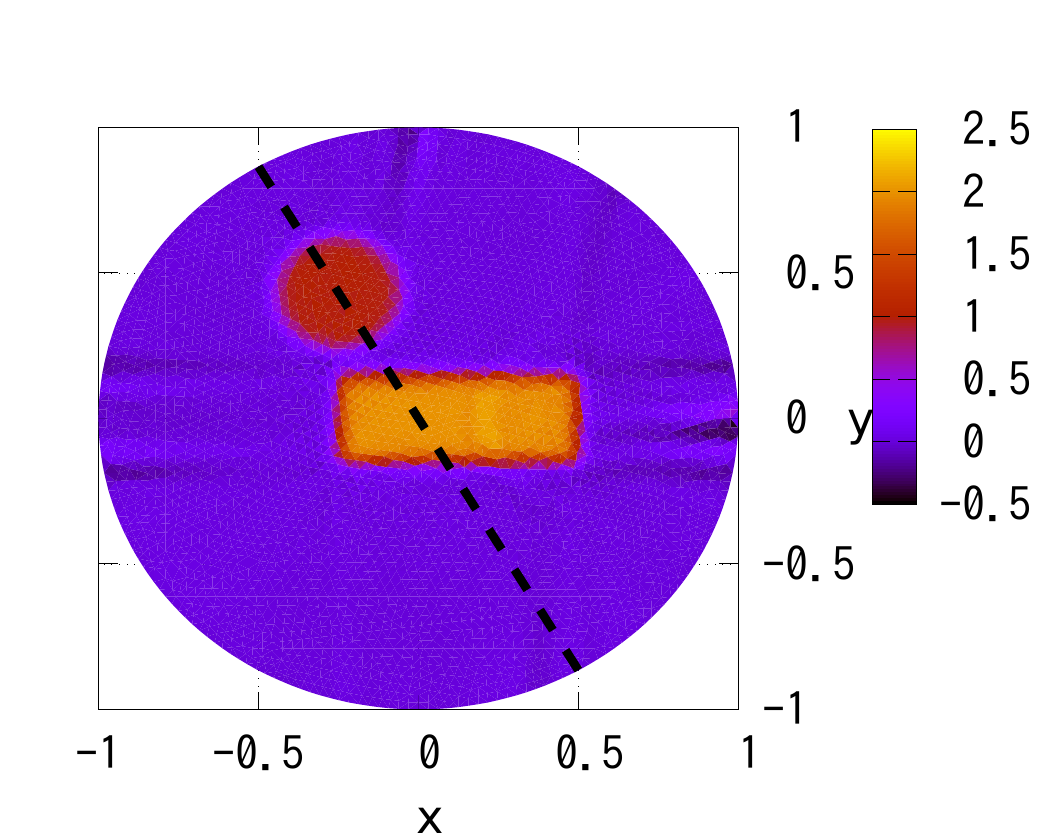}
\end{minipage}
\begin{minipage}{.45\textwidth}
\centering
\includegraphics[width=\textwidth]{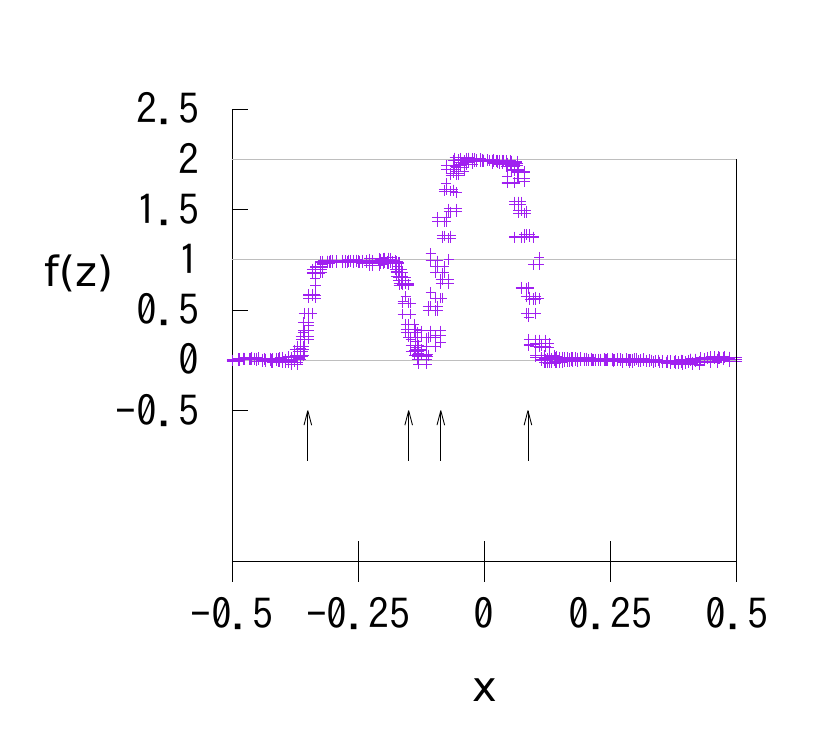}
\end{minipage}
\caption{\label{fig:smootha:reconst}$C^2$-smooth attenuation case: Reconstructed source (on the left)
and its section on the dotted line (on the right). The arrows on the right show the points where the dotted line meets  $\partial B_2$ and $\partial R$.}
\end{figure}

In the second example, we apply the proposed algorithm to the case of a discontinuous attenuation $a=\mu_\text{s}+\mu_\text{a}$ with \begin{equation*}%\label{discont_a}
\mu_{\text{a}}(z) = \begin{cases}
2, \quad &\text{in $B_1$};\\
1, \quad &\text{in $B_2$};\\
0.1, \quad &\text{otherwise}.
\end{cases}
\end{equation*}

While different, the boundary data in the second experiment is graphically indistinguishable from the data in Figure~\ref{fig:smootha:boundary} in the smooth case.
\begin{figure}[ht]
\begin{minipage}{.5\textwidth}
\centering
\includegraphics[width=\textwidth]{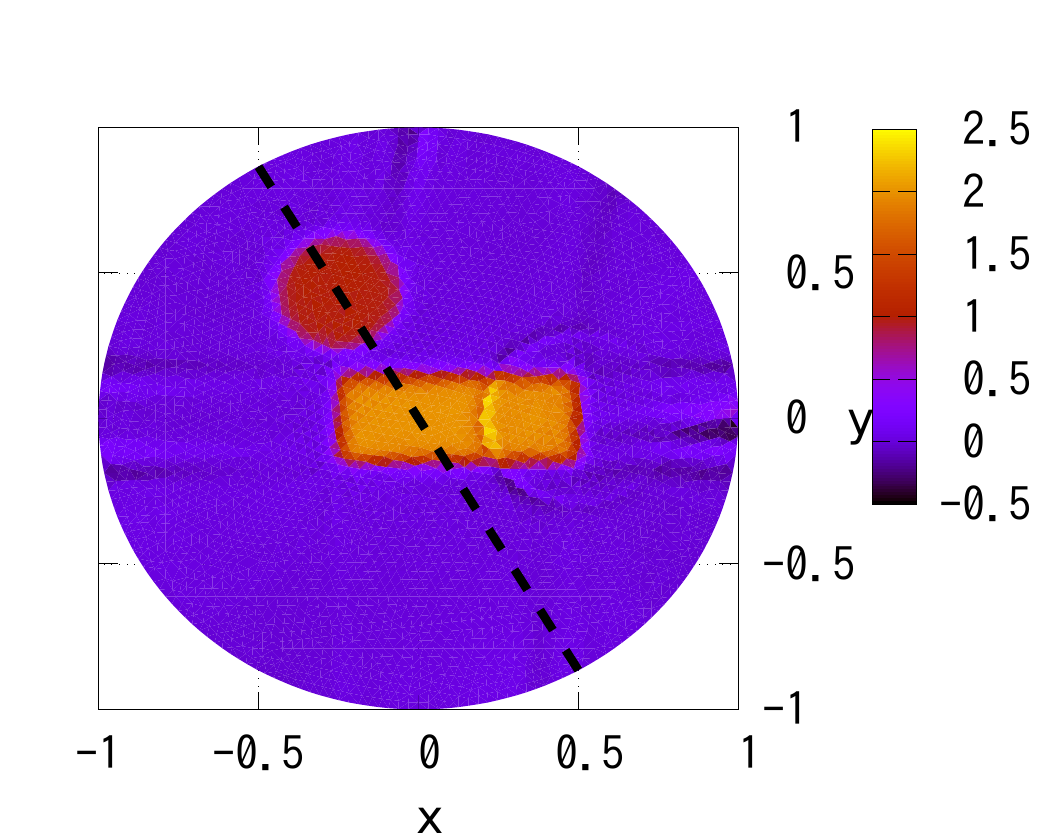}
\end{minipage}
\begin{minipage}{.45\textwidth}
\centering
\includegraphics[width=\textwidth]{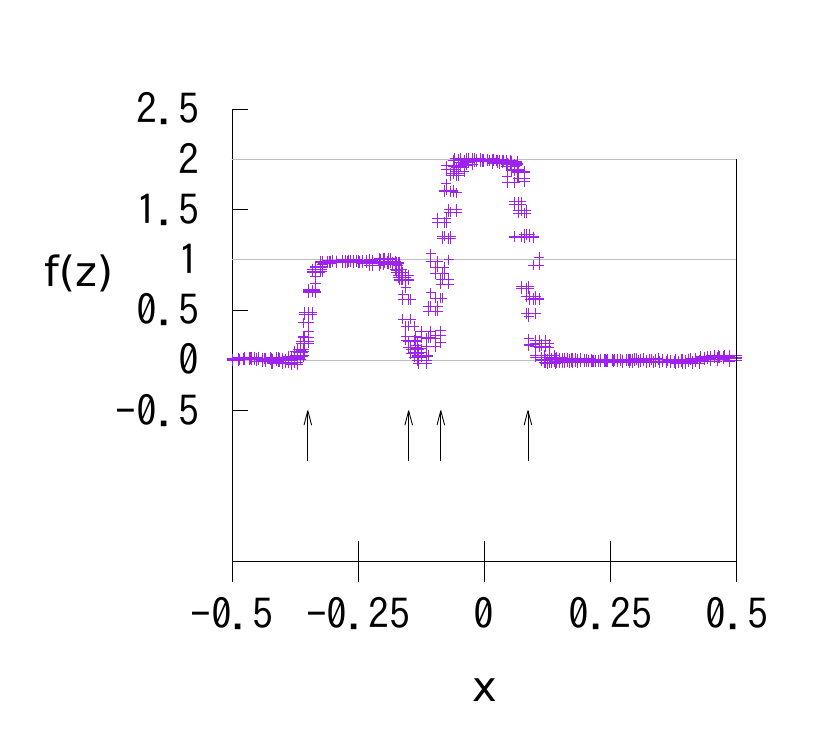}
\end{minipage}
\caption{\label{fig:disconta:reconst} Discontinuous attenuation case: Reconstructed source (on the left)
and its section on the dotted line (on the right).The arrows on the right show the points where the dotted line meets  $\partial B_2$ and $\partial R$.}
\end{figure}

The numerically reconstructed source, shown in Figure~\ref{fig:disconta:reconst}, agrees well with the exact source \eqref{exact_source},
implying that the proposed algorithm has a potential in reconstruction
even if the attenuation admits the discontinuity.

\begin{figure}[ht]
\begin{minipage}{.5\textwidth}
\centering
\includegraphics[width=\textwidth]{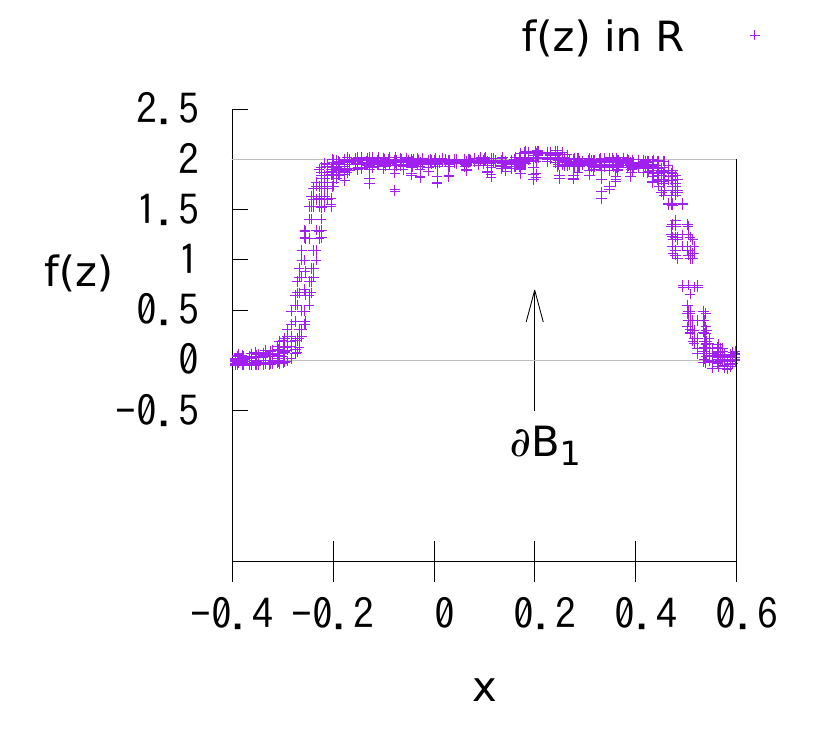}
\end{minipage}
\begin{minipage}{.45\textwidth}
\centering
\includegraphics[width=\textwidth]{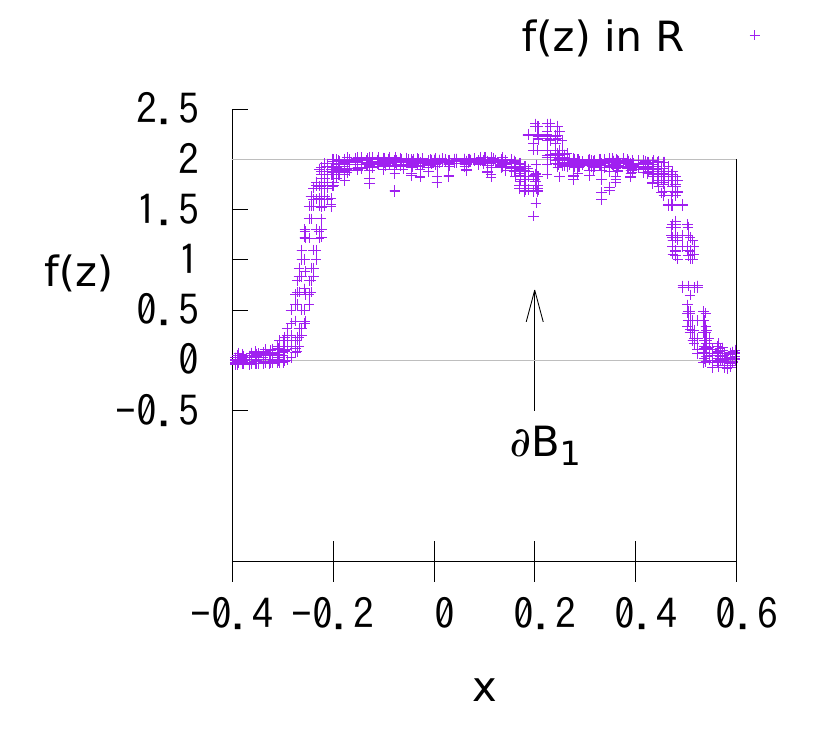}
\end{minipage}
\caption{\label{fig:projection}The projection on the $x$-axis of the reconstructed source $f$ in a neighborhood of the rectangle $R$ for the $C^2$-smooth attenuation (on the left) vs. discontinuous attenuation (on the right).
The arrows indicate $\partial B_1$.}
\end{figure}

Figure~\ref{fig:projection} shows
the projection on the $x$-axis of the reconstructed $f(z)$ for $z \in [-0.4,0.6]\times[-0.1,0.1]$. The arrows show the location of $\partial B_1$ in $R$, where the attenuation has a jump. 

In both reconstructions two types of artifacts appear, one type due to the singular support of $f$ and the other due to the singular support of $a$. Observe how the effect of the latter type of singularities diminishes with an increase in the smoothness of $a$.

At the level of singular support, the image is similar to that in the  non-scattering case ($k=0$). This is expected due to the smoothing effect of scattering.
A quantitative understanding of the effect of singularities in the attenuation, in the presence of scattering is subject to future work.

\section*{Acknowledgment}
The authors thank  D.~Kawagoe for useful discussions of regularity of solution of the forward problem.
The work of H.~ Fujiwara was supported by JSPS KAKENHI Grant Numbers 16H02155, 18K18719, and 18K03436.
The work of K.~ Sadiq  was supported by the Austrian Science Fund (FWF), Project P31053-N32. 

\section*{Appendix: New mapping properties of the integrating operator $e^{\pm G}$}
In this section we prove Proposition \ref{eGprop}.
%%%%%%%%%%%%%%%%%%%%%%%%%%%%%%%%%%%%%%%%
\begin{proof}
Since $a\in C^{2,s}(\ol \OM)$, $s>1/2$, it follows from Lemma \ref{hproperties} that $e^{\pm h} \in C^{2,s}(\ol \OM \times \sph)$.
For $z \in \ol{\OM}$, let $\balpha(z) = \langle \alpha_{0}(z), \alpha_{1}(z),  ...  \rangle $, and $\del \balpha(z) = \langle \del \alpha_{0}(z), \del \alpha_{1}(z),  ...  \rangle $, where $\del$ is the derivative in the spatial variable.
By the proof in Bernstein's lemma \cite[Chap. I, Theorem 6.3]{katznelson},
\begin{align*}
 \underset{z \in \ol \OM}{\sup} \, \sum_{k=1}^{\INF}\, k \, \lvert \alpha_{k}(z) \rvert \leq  \underset{z \in \ol \OM}{\sup} \, \lnorm{e^{ -h(z,\cdotp)} }_{C^{2,s}}<\infty, \quad \text{and}
 \quad \underset{z \in \ol \OM}{\sup} \, \sum_{k=1}^{\INF}\, k \, \lvert \del \alpha_{k}(z) \rvert \leq  \underset{z \in \ol \OM}{\sup} \, \lnorm{ \del  e^{ -h(z,\cdotp)} }_{C^{1,s}}<\infty.
\end{align*}
Similarly, $\bbeta, \del \bbeta \in l^{1,1}_{\INF}(\ol \OM)$.
%%%%%%%%%%%%%%%%%%%%%%%%%%%%%%%%%%%%%%%%

To prove the case $p=0=q$, let $\bu \in l^2(\BN;L^2(\OM))$.
By using Young's inequality
\begin{align} \nonumber
\lnorm{e^{-G}\bu}^2 = \sum_{j=0}^{\INF} \lnorm{ (e^{-G}\bu)_{-j} }^{2}_{ L^2(\OM)} &=  \int_{\OM} \lnorm{\balpha \ast \bu}^2_{l_2} \\ \label{esteGL2}
& \leq \int_{\OM} \lnorm{\balpha}^2_{l_1} \lnorm{\bu}^2_{l_2}   \leq \sup_{\ol \OM } \lnorm{\balpha}^2_{l_1} \int_{\OM}  \lnorm{\bu}^2_{l_2} = \sup_{\ol \OM } \lnorm{\balpha}^2_{l_1} \lnorm{\bu}^2,
\end{align} where the discrete convolution (with respect to the index) is defined in \eqref{eGop}.%, and in the first inequality we use Young's  inequality. 
%%%%%%%%%%%%%%%%%%%%%%%%%%%%%%%%%%%%%%%%

To prove the case $p=1$ and $q=0$, let $\bu \in  l^{2,1}(\BN;L^2(\OM))$. We estimate the norm 
\begin{align}\nonumber
\lnorm{e^{-G}\bu}^2_{1,0} = \sum_{j=0}^{\INF} (1+j^2)  \lnorm{(e^{-G}\bu)_{-j}}^{2}_{ L^2(\OM)} &=  \sum_{j=0}^{\INF} \lnorm{(e^{-G}\bu)_{-j}}^{2}_{ L^2(\OM)}
+ \sum_{j=0}^{\INF} j^2 \lnorm{ (e^{-G}\bu)_{-j} }^{2}_{ L^2(\OM)}\\ \label{est_eGuL2} 
&\quad \leq \sup_{\ol \OM } \lnorm{\balpha}^2_{l_1} \int_{\OM}  \lnorm{\bu}^2_{l_2} + \int_{\OM} \sum_{j=0}^{\INF} j^2\lvert (e^{-G}\bu)_{-j} \rvert^{2},
\end{align} where in the first inequality we use estimate \eqref{esteGL2} from the case $p=0=q$.

To estimate the last term in \eqref{est_eGuL2}, we need to account for both positive and negative Fourier modes. 
Let $\widehat{u}$, and  $\widehat{e^{- h} }$ denote the sequence valued of the Fourier modes of $u$, and $e^{- h}$, respectively,  i.e.,
\begin{subequations} \label{seq_UeHhat}
\begin{align} \label{seq_Uhat}
 \widehat{u } = \langle \cdots, u_{-n}, \cdots , u_{-1}, &u_{0}, u_{1},\cdots, u_{n}, \cdots  \rangle,  \\ \label{seq_eHhat}
    \widehat{e^{- h}} = \langle \cdots, 0, \cdots \cdots, 0,\, &\alpha_{0}, \alpha_{1}, \cdots, \alpha_{n}, \cdots  \rangle.
\end{align}
\end{subequations}
By using Plancherel we estimate 
\begin{align}\nonumber
\int_{\OM} \sum_{j=0}^{\INF} j^2\lvert (e^{-G}\bu)_{-j} \rvert^{2} \leq \int_{\OM} \sum_{j=-\INF}^{\INF} j^2|(\widehat{e^{- h}} \ast \widehat{u })_j|^2 &=   \int_{\OM} \int_{\sph} \lvert\del_{\theta} (e^{-h} u) \rvert^{2}  \\ \label{est(ii)}
&\leq   \int_{\OM} \int_{\sph} \lvert e^{-h} \del_{\theta}  u \rvert^{2} +  \int_{\OM} \int_{\sph} \lvert u \, \del_{\theta} (e^{-h}) \rvert^{2}.
\end{align} 
To estimate the last two terms in \eqref{est(ii)}, let $\widehat{\del_{\theta}u }$ denote the (double) sequence of the Fourier modes of $\del_{\theta} u$, and
let $\widehat{\del_{\theta}e^{- h}}$ denote the sequence valued of the Fourier modes of $\del_{\theta}e^{- h}$, i.e., 
\begin{subequations} \label{seq_UeHDhat}
\begin{align} \label{seq_UDhat}
-i\widehat{\del_{\theta}u } = \langle \cdots, -nu_{-n}, \cdots , -u_{-1}, 0, &u_{1}, 2u_2, \cdots \cdots , nu_{n}, \cdots  \rangle,  \\ \label{seq_eHDhat}
   -i\widehat{\del_{\theta}e^{- h}}= \langle \cdots \cdots, 0, \cdots \cdots \cdots ,0, 0,\, & \alpha_{1}, 2\alpha_{2},\cdots \cdots , n\alpha_{n}, \cdots \rangle.
\end{align}
\end{subequations}
By using Plancherel the expression in \eqref{est(ii)} becomes 
\begin{align*}
\int_{\OM} \sum_{j=0}^{\INF} j^2\lvert (e^{-G}\bu)_{-j} \rvert^{2} 
&\leq  \int_{\OM} \lnorm{\widehat{e^{- h}} \ast \widehat{\del_{\theta}u} }^2_{l_2(\BZ)} + \int_{\OM} \lnorm{\widehat{\del_{\theta}e^{- h}} \ast \widehat{u} }^2_{l_2(\BZ)} \\
& \leq  \sup_{\ol \OM } \lnorm{ \widehat{e^{- h}}}^2_{l_1(\BZ)}  \int_{\OM}  \sum_{j=-\INF}^{\INF} |(\widehat{\del_{\theta}u})_j|^2 
+ \sup_{\ol \OM } \lnorm{\widehat{\del_{\theta}e^{- h}} }^2_{l_1(\BZ)} \int_{\OM}   \sum_{j=-\INF}^{\INF} |\widehat{u }_j|^2\\
& =  \sup_{\ol \OM } \lnorm{\balpha}^2_{l_1} \int_{\OM} \sum_{j=-\INF}^{\INF} j^2|\widehat{u}_j|^2 % 2 \sum_{j=0}^{\INF} j^2\lvert u_{-j} \rvert^{2}
+ \sup_{z \in \ol \OM } \left(\sum_{k=1}^{\INF}  k \lvert \alpha_{k}(z)\rvert \right)^2  \int_{\OM} \sum_{j=-\INF}^{\INF} |\widehat{u }_j|^2\\
& =  2\sup_{\ol \OM } \lnorm{\balpha}^2_{l_1} \int_{\OM} \sum_{j=0}^{\INF} j^2\lvert u_{-j} \rvert^{2}
+ 2 \lnorm{\balpha}^2_{l^{1,1}_{\INF}(\ol \OM)} \int_{\OM}  \lnorm{\bu}^2_{l_2},
\end{align*} where in the second inequality we use Young's  inequality, in the first equality we use $\ds \sup_{\ol \OM } \lnorm{ \widehat{e^{- h}} }^2_{l_1(\BZ)} = \sup_{\ol \OM } \lnorm{\balpha}^2_{l_1}$, and 
$\ds \sup_{\ol \OM } \lnorm{ \widehat{\del_{\tta}e^{- h}} }^2_{l_1(\BZ)} = \sup_{z \in \ol \OM } \left(\sum_{k=1}^{\INF}  k \lvert \alpha_{k}(z)\rvert \right)^2$, and in the last equality we use the norm in \eqref{lone1infdefn} and $u_{-n}=\ol{u_n}$ as $u$ is real valued.

From the above estimate of \eqref{est(ii)}, the expression in \eqref{est_eGuL2} becomes 
\begin{align}\nonumber
\lnorm{e^{-G}\bu}^2_{1,0}  &\leq  \sup_{\ol \OM } \lnorm{\balpha}^2_{l_1} \left( \int_{\OM}  \lnorm{\bu}^2_{l_2}  + 2 \int_{\OM}  \sum_{j=0}^{\INF} j^2\lvert u_{-j} \rvert^{2}  \right)
+ 2 \lnorm{\balpha}^2_{l^{1,1}_{\INF}(\ol \OM)}  \int_{\OM}  \lnorm{\bu}^2_{l_2} \\ \label{eG_10norm}
&\quad \leq 2 \sup_{\ol \OM } \lnorm{\balpha}^2_{l_1} \lnorm{\bu}^2_{1,0} + 2 \lnorm{\balpha}^2_{l^{1,1}_{\INF}(\ol \OM)} \lnorm{\bu}^2 \leq 4 \lnorm{\balpha}^2_{l^{1,1}_{\INF}(\ol \OM)} \lnorm{\bu}^2_{1,0}.
\end{align} 

%%%%%%%%%%%%%%%%%%%%%%%%%%%%%%%%%%%%%%%%%%%%%%%%%%%%%%%%%%%%%%%%%%%%%%%%%%%%%%%%
To prove the case $p=0$ and $q=1$, let $\bu \in l^{2}(\BN;H^1(\OM))$, then $\bu , \del \bu \in l^{2}(\BN;L^2(\OM))$.
For $\bu \in l^{2}(\BN;L^2(\OM))$, $e^{\pm G} \bu \in l^{2}(\BN;L^2(\OM))$ from the case $p=0=q$, so it suffices to show that $\del(e^{-G} \bu) \in l^{2}(\BN;L^2(\OM))$. 

The discrete convolution below is with respect to the index, while $\del$ acts on the spatial variable.
\begin{align}\nonumber
\lnorm{\del(e^{-G} \bu)}^2  &=   \int_{\OM} \lnorm{\del(\balpha \ast \bu)}^2_{l_2} 
\leq \int_{\OM} \lnorm{\balpha \ast \del \bu}^2_{l_2} + \int_{\OM} \lnorm{\del\balpha \ast \bu}^2_{l_2}  \\ \label{est_1_{iii}}
&\quad \leq \int_{\OM} \lnorm{\balpha}^2_{l_1} \lnorm{\del \bu}^2_{l_2} +\int_{\OM} \lnorm{\del \balpha}^2_{l_1} \lnorm{\bu}^2_{l_2} 
\leq \sup_{\ol \OM } \lnorm{\balpha}^2_{l_1} \lnorm{\del \bu}^2 + \sup_{\ol \OM } \lnorm{\del \balpha}^2_{l_1} \lnorm{\bu}^2,
\end{align} where in the second inequality we use  discrete Young's  inequality. 

From \eqref{est_1_{iii}} and estimate \eqref{esteGL2} from the case $p=0=q$,
we estimate the norm 
\begin{align} \nonumber
\lnorm{e^{-G} \bu}^2_{0,1} &=  \lnorm{e^{-G} \bu}^2 + \lnorm{\del(e^{-G} \bu)}^2 
\leq \sup_{\ol \OM } \lnorm{\balpha}^2_{l_1}  \left( \lnorm{\bu}^2 + \lnorm{\del \bu}^2 \right) + \sup_{\ol \OM } \lnorm{\del \balpha}^2_{l_1} \lnorm{\bu}^2 \\  \label{eG_01norm}
& \;\leq \sup_{\ol \OM } \lnorm{\balpha}^2_{l_1}  \lnorm{\bu}^2_{0,1} + \sup_{\ol \OM } \lnorm{\del \balpha}^2_{l_1} \lnorm{\bu}^2  
\leq \left( \lnorm{\balpha}^2_{l^{1,1}_{\INF}(\ol \OM)} + \lnorm{\del \balpha}^2_{l^{1,1}_{\INF}(\ol \OM)} \right) \lnorm{\bu}^2_{0,1}.
\end{align}

%%%%%%%%%%%%%%%%%%%%%%%%%%%%%%%%%%%%%%%%%%%%%%%%%%%%%%%%%%%%%%%%%%%%%%%%%%%%%%%%
To prove the case $p=1=q$, let $\bu \in l^{2,1}(\BN;H^1(\OM))$, then $\bu , \del \bu \in l^{2,1}(\BN;L^2(\OM))$.
Since $\bu \in l^{2,1}(\BN;L^2(\OM))$, $e^{- G} \bu \in l^{2,1}(\BN;L^2(\OM))$ by case $p=1$ and $q=0$. Thus suffices to show that $\del(e^{-G} \bu) \in l^{2,1}(\BN;L^2(\OM))$.   
We estimate the norm  
\begin{align} \nonumber
\lnorm{\del(e^{-G} \bu)}^2_{1,0}   &= \lnorm{\del(e^{-G} \bu)}^2 + \sum_{j=0}^{\INF} j^2 \lnorm{ \del(e^{-G}\bu)_{-j} }^{2}_{ L^2(\OM)} \\ \label{est_DeGL2}
&\quad \leq \sup_{\ol \OM } \lnorm{\balpha}^2_{l_1} \lnorm{\del \bu}^2 + \sup_{\ol \OM } \lnorm{\del \balpha}^2_{l_1} \lnorm{\bu}^2 + \int_{\OM} \sum_{j=0}^{\INF} j^2 \lvert (\del(\balpha \ast \bu))_{-j} \rvert^{2},
\end{align} where in the first inequality we have used estimate \eqref{est_1_{iii}} from the case $p=0$ and $q=1$.

To estimate the last term in \eqref{est_DeGL2}, we need to account for both positive and negative Fourier modes. 
Let $\widehat{u}$, and  $\widehat{e^{- h} }$ denote the sequence valued of the Fourier modes of $u$, and $e^{- h}$, respectively,  as in \eqref{seq_UeHhat}, and 
let $\widehat{\del_{\theta}u }$ and $\widehat{\del_{\theta}e^{- h}}$ denote the sequence valued of the Fourier modes of $\del_{\theta} u$, and $\del_{\theta}e^{- h}$, respectively,  as in \eqref{seq_UeHDhat}.
Moreover, let $\widehat{\del u }$, and  $\widehat{\del_{\theta} \del  u }$  denote the sequence valued of the Fourier modes of $\del  u$, respectively, $\del_{\theta} \del  u$, i.e.
\begin{align*}%\label{seq_Umixedhat}
 \widehat{\del  u } = \langle \cdots \cdots, \del u_{-n}, \cdots  \cdots , \del u_{-1}, \, &\del u_{0} \,, \del u_{1}, \cdots, \del u_{n}, \cdots  \rangle, \\
 -i\widehat{\del_{\theta} \del  u} = \langle \cdots, -n\del u_{-n}, \cdots , -\del u_{-1}, \, &\;0 \;, \; \del u_{1}, \cdots, n\del u_{n}, \cdots  \rangle.
\end{align*}
Similarly, let $\widehat{\del e^{- h}}$, and $\widehat{\del_{\theta}\del  e^{- h}}$  denote the sequence valued of the Fourier modes of $\del e^{- h}$, respectively, $\del_{\theta} \del  e^{- h}$, and recall that their negative Fourier modes vanish:
\begin{align*}%\label{seq_eHDmixedhat}
 \widehat{\del e^{- h}} = \langle \cdots, 0,\cdots 0,\del\alpha_0, \; &\del \alpha_{1}, \del \alpha_{2}, \cdots, \del\alpha_{n}, \cdots  \rangle, \\
 -i\widehat{\del_{\theta}\del  e^{- h}} = \langle \cdot \cdots, 0,\cdots ,\;  0  ,\; &\del \alpha_{1}, 2\del \alpha_{2},\cdots, n\del \alpha_{n}, \cdots  \rangle.
\end{align*}
Using Plancherel and the notations above, we estimate the last term in \eqref{est_DeGL2} as follows.
\begin{align}\nonumber
\int_{\OM} &\sum_{j=0}^{\INF} j^2 \lvert (\del(\balpha \ast \bu))_{-j} \rvert^{2}  \leq \int_{\OM} \sum_{j=0}^{\INF} j^2 \lvert (\del\balpha \ast \bu)_{-j} \rvert^{2} + \int_{\OM} \sum_{j=0}^{\INF} j^2 \lvert (\balpha \ast \del \bu))_{-j} \rvert^{2} \\ \nonumber
& \leq \int_{\OM} \sum_{j=-\INF}^{\INF} j^2 |(\widehat{\del  e^{- h}} \ast \widehat{u })_j|^2 + \int_{\OM} \sum_{j=-\INF}^{\INF} j^2 |(\widehat{e^{- h}} \ast \widehat{\del  u })_j|^2 \\ \nonumber
& = \int_{\OM} \int_{\sph} \lvert\del_{\theta} ((\del e^{-h}) u) \rvert^{2} + \int_{\OM} \int_{\sph} \lvert\del_{\theta} (e^{-h} \del  u) \rvert^{2} \\ \label{est(iii)}
& \leq \int_{\OM} \int_{\sph} \lvert (\del_{\theta} \del  e^{-h})   u \rvert^{2} +  \int_{\OM} \int_{\sph} \lvert \del (e^{-h}) \, \del_{\theta} u    \rvert^{2} 
%\\ \nonumber%\label{est(iii)} &\qquad \quad
+ \int_{\OM} \int_{\sph} \lvert \del_{\theta} e^{-h}   \del u \rvert^{2} +  \int_{\OM} \int_{\sph} \lvert e^{-h} \, \del_{\theta} (\del  u) \rvert^{2}.
\end{align}
%%%%%%%%%%%%%%%%%%%%%%%%%%%%%%%%%%%%%%
Next we estimate each term on the right hand side of \eqref{est(iii)} separately.
\begin{enumerate}[I.]
%%%%%%%%%%%% First term %%%%%%%%%%%%%%%%%%%   
 \item Estimate the first term in \eqref{est(iii)}: 
\begin{align*}
 \int_{\OM} \int_{\sph} \lvert (\del_{\theta} \del  e^{-h})   u \rvert^{2} &=  \int_{\OM} \lnorm{\widehat{\del_{\theta} \del  e^{- h}} \ast \widehat{u}}^2_{l_2(\BZ)}
 \leq \sup_{\ol \OM } \lnorm{\widehat{\del_{\theta} \del  e^{- h}} }^2_{l_1(\BZ)}  \int_{\OM} \lnorm{ \widehat{u}}^2_{l_2(\BZ)} \\
 &\quad = \sup_{\ol \OM } \left(\sum_{k=1}^{\INF}  k \lvert \del \alpha_{k}(z)\rvert \right)^2 \int_{\OM} \sum_{j=-\INF}^{\INF} |\widehat{u}_j|^2 = 2 \lnorm{\del \balpha}^2_{l^{1,1}_{\INF}(\ol \OM)} \int_{\OM} \lnorm{\bu}_{l_2}^2,
\end{align*} where in the first equality we use Plancherel, in the first inequality we use Young's  inequality,
in the second equality we have used the fact that $\sup_{\ol \OM } \lnorm{\widehat{\del_{\theta} \del  e^{- h}}}^2_{l_1(\BZ)} = \sup_{z \in \ol \OM } \left(\sum_{k=1}^{\INF}  k \lvert \del \alpha_{k}(z)\rvert \right)^2$,
and in the last equality we use the norm in \eqref{lone1infdefn} and the fact that $u_{-n}=\ol{u_n}$ as $u$ is real valued.

%%%%%%%%%%%% second term %%%%%%%%%%%%%%%%%%%     
 \item Estimate the second term in \eqref{est(iii)}:
\begin{align*}
 \int_{\OM} \int_{\sph} \lvert \del  e^{-h} \, \del_{\theta} u \rvert^{2} &=  \int_{\OM} \lnorm{\widehat{\del  e^{-h}} \ast \widehat{\del_{\theta} u } }^2_{l_2(\BZ)}
 \leq \sup_{\ol \OM } \lnorm{\widehat{ \del  e^{- h}} }^2_{l_1(\BZ)}  \int_{\OM} \lnorm{ \widehat{\del_{\theta} u} }^2_{l_2(\BZ)} \\
 &\quad = \sup_{\ol \OM } \left(\sum_{k=0}^{\INF}  \lvert \del \alpha_{k}(z)\rvert \right)^2 \int_{\OM} \sum_{j=-\INF}^{\INF} |(\widehat{\del_{\theta }u})_j|^2 %= \int_{\OM} 2 \sum_{j=0}^{\INF} \lvert u_{-j} \rvert^{2}
 = 2\sup_{\ol \OM } \lnorm{\del \balpha}^2_{l_1} \int_{\OM} \sum_{j=0}^{\INF} j^2 \lvert u_{-j} \rvert^{2},
\end{align*} where in the first equality we have use Plancherel, in the first inequality we use Young's  inequality, 
in the second equality we have used the fact that $\sup_{\ol \OM } \lnorm{\widehat{\del  e^{- h}}}^2_{l_1(\BZ)} = \sup_{z \in \ol \OM } \left(\sum_{k=0}^{\INF}  \lvert \del \alpha_{k}(z)\rvert \right)^2$,
and in the last equality we use the fact that $u_{-n}=\ol{u_n}$ as $u$ is real valued.
%%%%%%%%%%%% second last term %%%%%%%%%%%%%%%%%%%     
 \item Estimate the third term  in \eqref{est(iii)}: 
\begin{align*}
 \int_{\OM} \int_{\sph} \lvert \del_{\theta} e^{-h}   \del u   \rvert^{2} &=  \int_{\OM} \lnorm{\widehat{\del_{\theta} e^{-h}} \ast \widehat{\del u}}^2_{l_2(\BZ)}
 \leq \sup_{\ol \OM } \lnorm{\widehat{ \del_{\theta} e^{- h}} }^2_{l_1(\BZ)}  \int_{\OM} \lnorm{ \widehat{\del  u} }^2_{l_2(\BZ)} \\
 &\quad = \sup_{\ol \OM } \left(\sum_{k=1}^{\INF}  k \lvert  \alpha_{k}(z)\rvert \right)^2 \int_{\OM} \sum_{j=-\INF}^{\INF} |(\widehat{\del  u})_j|^2 
 = 2 \lnorm{ \balpha}^2_{l^{1,1}_{\INF}(\ol \OM)}  \int_{\OM} \lnorm{\del \bu}_{l_2}^2,
\end{align*} where in the first equality we have use Plancherel, in the first inequality we use Young's  inequality, 
in the second equality we have used the fact that $\sup_{\ol \OM } \lnorm{\widehat{\del_{\theta} e^{- h}}}^2_{l_1(\BZ)} = \sup_{z \in \ol \OM } \left(\sum_{k=1}^{\INF} k \lvert \alpha_{k}(z)\rvert \right)^2$,
and in the last equality we use the norm in \eqref{lone1infdefn} and the fact that $u_{-n}=\ol{u_n}$ as $u$ is real valued.
%%%%%%%%%%%% Last term %%%%%%%%%%%%%%%%%%%     
\item Estimate the last term  in \eqref{est(iii)}:
\begin{align*}
 \int_{\OM} \int_{\sph} \lvert e^{-h} \, \del_{\theta} \del  u \rvert^{2} &=  \int_{\OM} \lnorm{\widehat{e^{- h}} \ast \widehat{\del_{\theta} \del  u } }^2_{l_2(\BZ)}
 \leq \sup_{\ol \OM } \lnorm{\widehat{e^{- h}} }^2_{l_1(\BZ)}  \int_{\OM} \lnorm{\widehat{\del_{\theta} \del  u} }^2_{l_2(\BZ)} \\
 &\quad = \sup_{\ol \OM } \left(\sum_{k=0}^{\INF}   \lvert  \alpha_{k}(z)\rvert \right)^2 \int_{\OM} \sum_{j=-\INF}^{\INF}  |(\widehat{\del_{\theta} \del  u})_j|^2 
        = 2\sup_{\ol \OM } \lnorm{\balpha}^2_{l_1} \int_{\OM} \sum_{j=0}^{\INF} j^2 \lvert \del u_{-j} \rvert^{2},
\end{align*} where in the first equality we use Plancherel, in the first inequality we use Young's  inequality, 
in the second equality we have used the fact that $\sup_{\ol \OM } \lnorm{\widehat{ e^{- h}}}^2_{l_1(\BZ)} = \sup_{z \in \ol \OM } \left(\sum_{k=0}^{\INF}  \lvert \alpha_{k}(z)\rvert \right)^2$,
and in the second to last equality we use $u_{-n}=\ol{u_n}$ as $u$ is real valued.
\end{enumerate}
The last term of \eqref{est_DeGL2} is thus estimated by
\begin{align*}\nonumber
\sum_{j=0}^{\INF} j^2\lVert (\del(e^{-G}\bu))_{-j} \rVert^{2}_{ L^2(\OM)} 
& \leq 2 \lnorm{\del \balpha}^2_{l^{1,1}_{\INF}(\ol \OM)} \int_{\OM} \lnorm{\bu}_{l_2}^2 + 2\sup_{\ol \OM } \lnorm{\del \balpha}^2_{l_1} \int_{\OM} \sum_{j=0}^{\INF} j^2 \lvert u_{-j} \rvert^{2} \\
&\qquad +  2 \lnorm{\balpha}^2_{l^{1,1}_{\INF}(\ol \OM)}  \int_{\OM} \lnorm{\del \bu}_{l_2}^2 + 2\sup_{\ol \OM } \lnorm{\balpha}^2_{l_1} \int_{\OM} \sum_{j=0}^{\INF} j^2 \lvert \del u_{-j} \rvert^{2}.
\end{align*}
Therefore the estimate \eqref{est_DeGL2} yields,
\begin{align*}
\lnorm{\del(e^{-G} \bu)}^2_{1,0} 
&\leq 2\lnorm{\del \balpha}^2_{l^{1,1}_{\INF}(\ol \OM)} \lnorm{\bu}^2 +  \sup_{\ol \OM } \lnorm{\del \balpha}^2_{l_1}  \left( \int_{\OM} \lnorm{\bu}_{l_2}^2 + 2\int_{\OM} \sum_{j=0}^{\INF} j^2 \lvert u_{-j} \rvert^{2} \right)\\
&\qquad + 2\lnorm{\balpha}^2_{l^{1,1}_{\INF}(\ol \OM)}  \lnorm{\del \bu}^2  +  \sup_{\ol \OM } \lnorm{\balpha}^2_{l_1}  \left( \int_{\OM} \lnorm{\del \bu}_{l_2}^2 + 2\sum_{j=0}^{\INF} j^2 \lvert \del u_{-j} \rvert^{2} \right) \\
& \leq 2  \lnorm{\balpha}^2_{l^{1,1}_{\INF}(\ol \OM)} \lnorm{\del \bu}^2 + 2\sup_{\ol \OM } \lnorm{\del \balpha}^2_{l_1} \lnorm{\bu}^2_{1,0}  + 2\lnorm{\balpha}^2_{l^{1,1}_{\INF}(\ol \OM)} \lnorm{\del \bu}^2 + 2 \sup_{\ol \OM } 
\lnorm{\balpha}^2_{l_1} \lnorm{\del \bu}^2_{1,0} \\
& \leq 4  \lnorm{\balpha}^2_{l^{1,1}_{\INF}(\ol \OM)} \lnorm{\del \bu}^2_{1,0}  + 4\lnorm{\balpha}^2_{l^{1,1}_{\INF}(\ol \OM)}  \lnorm{\del \bu}^2_{1,0} .
\end{align*}
Using the above estimate of $||\del(e^{-G} \bu)||^2_{1,0}$ and estimate \eqref{eG_10norm}, the norm  
\begin{align}\nonumber
\lnorm{e^{-G} \bu}^2_{1,1} =  \lnorm{e^{-G} \bu}^2_{1,0} + \lnorm{\del(e^{-G} \bu)}^2_{1,0} 
&\leq 4\left( \lnorm{\balpha}^2_{l^{1,1}_{\INF}(\ol \OM)} + \lnorm{\del \balpha}^2_{l^{1,1}_{\INF}(\ol \OM)} \right) \lnorm{\bu}_{1,0}^2 + 4\lnorm{\balpha}^2_{l^{1,1}_{\INF}(\ol \OM)} \lnorm{\del \bu}_{1,0}^2 \\ \label{eG_11norm}
&\; \leq 4\left( \lnorm{\balpha}^2_{l^{1,1}_{\INF}(\ol \OM)} + \lnorm{\del \balpha}^2_{l^{1,1}_{\INF}(\ol \OM)} \right) \lnorm{\bu}_{1,1}^2.
\end{align}
%%%%%%%%%%%%%%%%%%%%%%%%%%%%%%%%%%%%%%%%%%%%%%%%%%%%%%%%%%%%%%%%%%%%%%%%%%%%%%%%

The mapping property for the case $p=\frac{1}{2}$ and $q=1$, follows from interpolation and the continuous embeddings 
$\ds
 l^{2,1}(\BN;H^1(\OM)) \xhookrightarrow{} l^{2,\frac{1}{2}}(\BN;H^1(\OM)) \xhookrightarrow{} l^{2}(\BN;H^1(\OM)).$
%%%%%%%%%%%%%%%%%%%%%%%%%%%%%%%%%%%%%%%%%%%%%%%%%%%%%%%%%%%%%%%%%%%%%%%%%%%%%%%%
\end{proof}

%%%%%%%%%%%%%%%%%%%%%%%%%%%%%%%%%%%%%%%%%%%%%%%%%%%%%%%%%%%%

%%%%%%%%%%%%%%%%%%%%%%%%%%%%%%%%%%%%%%%%%%%%%%%%%%%%%%%%%%%%%%%%%%%%%%%%%%
\end{document}